\newtheorem{definition}{Definition}[section]
\newtheorem{proposition}{Proposition}[section]
\newtheorem{lemma}{Lemma}[section]
\newtheorem{theorem}{Theorem}[section]
\newtheorem{corollary}{Corollary}[section]
\newtheorem{remark}{Remark}[section]
\newcommand{\rplus}{\R^{\smash{3}}_{\smash{\scriptscriptstyle +}}}
\newcommand{\rplusd}{\R^{\smash{d}}_{\smash{\scriptscriptstyle +}}}
\newcommand{\bu}{{u}}
\newcommand{\bvn}{\mathbf{v}}
\newcommand{\bun}{\mathbf{u}}
\newcommand{\bvpsi}{{\bm\psi}}
\newcommand{\bvphi}{{\bm\varphi}}
\newcommand{\bx}{x}
\newcommand{\by}{y}
\newcommand{\bphi}{\boldsymbol{\varphi}}
\newcommand{\vep}{\varepsilon }
\newcommand{\de}{\mathrm{d}}
\newcommand{\dx}{\,\de\bx}
\newcommand{\dy}{\,\de\by}
\newcommand{\R}{\mathbbm{R}}
\newcommand{\T}{\mathbbm{T}}
\newcommand{\tore}{{\mathbbm{T}^{3}}}
\DeclareMathOperator{\dive}{div}
\numberwithin{equation}{section}
\begin{document}


\title[Energy conservation: the H\"older case, with Dirichlet bc]{\textbf{Energy conservation for weak solutions of incompressible
    Newtonian fluid equations\\ in H\"older spaces with Dirichlet
    boundary conditions in the half-space} }
  \author{Luigi  C. Berselli}
  \address[L.C. Berselli]{Dipartimento di Matematica, Universit\`a
    di Pisa, Via F. Buonarroti 1/c, I-56127 Pisa,  ITALY, email:
    luigi.carlo.berselli@unipi.it}
  \author{Alex Kaltenbach}
  \address[A. Kaltenbach]{Institute of Mathematics, Technical
    University of Berlin, Stra\ss e des 17 Juni 136, D-10623 Berlin,
    GERMANY, email:kaltenbach@math.tu-berlin.de}
  \author{Michael R\r u\v zi\v cka{}}
  \address[M. R\r u\v zi\v cka{}]{Department of Applied Mathematics,
    University of Freiburg, Ernst--Zermelo--Str.~1, D-79104
    Freiburg, GERMANY, email: {rose@mathematik.uni-freiburg.de},
  corresponding author}
  \dedicatory{Dedicated to the memory of Vsevolod A. Solonnikov}
  \begin{abstract}
    We investigate sufficient H\"older continuity conditions on Leray--Hopf (weak)
    solutions to the in unsteady Navier--Stokes equations in three dimensions guaranteeing energy conservation.
    Our focus is on the half-space case with homogeneous Dirichlet boundary
    conditions. This problem is more technically challenging, if compared to the Cauchy or
    periodic cases, and has not been previously addressed. At present are known a few
    sub-optimal results obtained through Morrey embedding results based on conditions for the
    gradient of the velocity in Sobolev spaces. Moreover, the results in this paper are
    obtained without any additional assumption neither on the pressure nor the flux of the
    velocity, near to the boundary.
   \end{abstract}
   \keywords{Energy conservation, Onsager's conjecture, Navier--Stokes
    equations,   Dirichlet boundary conditions.}
  \subjclass[2020]{35Q30}
  
 \maketitle


\section{Introduction}
\thispagestyle{empty}

In the present paper, we establish results of energy conservation for Leray--Hopf (weak)
solutions of the unsteady Navier--Stokes equations in the half-space  with
homogeneous Dirichlet boundary conditions, in three space dimensions, under certain
additional assumptions of H\"older continuity. The velocity vector field
$\bvn \colon [0,T)\times \rplus\to \mathbbm{R}^3$ and kinematic pressure field
$\pi\colon (0,T)\times \rplus \to \mathbbm{R}$ are~such~that
\begin{equation}
  \label{eq:NSE}
  \begin{aligned}
    \partial_{t}\bvn-\nu\Delta\bvn+[\nabla\bvn]\bvn+\nabla\pi
    &=\mathbf{0}&&\quad\text{ in }(0,T)\times \rplus \,,
     \\
    \dive\bvn&=0&&\quad \text{ in }(0,T)\times \rplus \,,
    \\
    \bvn&=\mathbf{0}&&\quad \text{ on }(0,T)\times\partial\rplus\,,
    \\
    \bvn(0)&=\bvn_0 &&\quad \text{ in }\rplus \,,
  \end{aligned}
\end{equation}
where $\bvn_{0}\colon \rplus\to \mathbbm{R}^3$ is an initial condition
and  $\nu>0$ denotes the kinematic viscosity.  The convective term $\smash{[\nabla\bvn]\bvn\colon\rplus\to
  \mathbb{R}^3}$~is defined via
$([\nabla\bvn]\bvn)_i\hspace{-0.1em}\coloneqq \hspace{-0.1em}\smash{\sum_{j=1}^3{v_j\partial_j
    v_i}}$ for all $i\hspace{-0.1em}=\hspace{-0.1em}1,2,3$.

As suggested by Onsager's conjecture (\textit{cf}.~\cite{Ons1949}), we consider
Leray--Hopf (weak) solutions~to~the~unsteady Navier--Stokes equations~\eqref{eq:NSE}, where on the
velocity vector field is assumed~an additional H\"older regularity with respect to the space
variable.
Here, we focus on the boundary value problem, while the periodic case has been extensively
studied by various authors (\textit{cf}.~\cite{CL2020,Ber2023b,Ber2023}) under different
assumptions. The primary technical challenge involves constructing a divergence-free
approximation of the velocity vector field that respects the homogeneous Dirichlet boundary
condition and that allows for proper estimates and exact cancellations, see
Section~\ref{sec:mollification}. We achieve this through a combination of spatial
convolution and  translation. However, the resulting smoothing operator is not
self-adjoint, which introduces new technical challenges when studying terms involving
the time-derivative. To overcome these challenges, it becomes necessary to prove some
additional regularity on the time-derivative. To this end, we utilize results on
maximal parabolic regularity for the unsteady  Stokes equations (\textit{i.e.},  system \eqref{eq:NSE} without convective~term~$[\nabla \bvn]\bvn$). More precisely, this approach leads  to
certain estimates on the time-derivative (and second order derivatives) of the velocity at
the price of mild technical assumptions on the initial datum, especially regarding suitable
decay of fractional-order derivatives (being the domain with infinite measure).

It is worth noting that the same problem for the Euler equations (\textit{i.e.},~when~${\nu=0}$) allows for a simpler treatment (at least from~a~formal technical point of view)
as the pressure estimates (and, therefore, also those concerning the time~derivative) can
be obtained at a fixed time by solving certain steady~Poisson~\mbox{problems}~(\textit{cf}.~\cite{BBT2023,BT2018,BT2022}). Moreover, in the case of the Euler equations
in the half-space, one can reflect~the~data, as demonstrated in~\cite{RRS2018}, in this way, 
avoiding the need to deal with~the~pressure. However, the reflection technique is not
applicable to the unsteady Navier--Stokes equations with homogeneous Dirichlet boundary condition. It is
well-known that this can be done in the case of Navier conditions: a suitable extension
(\textit{i.e.}, \textit{[even, even, odd, even]} for the three components of the velocity and for the pressure)
allows to transform the problem into another equivalent problem in the full-space~$\mathbbm{R}^3$, thereby
obtaining the same results as those valid for the Cauchy problem. 

Coming the to specific assumptions of H\"older type on the velocity vector field, we recall that Onsager's conjecture (\textit{cf}.~\cite{Ons1949}), which has recently been confirmed for the Euler equations
(\textit{cf}.~\cite{Ise2018,BDLSV2019}), suggested the value $\alpha=\frac{1}{3}$ (\textit{cf}.~\cite{Ons1949}). \mbox{However}, the conjecture mainly focused on spatial H\"older
regularity. In our study, we consider the viscous problem (for which the convex
integration techniques are still not completely satisfactory) and establish families of
criteria based on the H\"older exponent in space (in combination with proper
time-integrability), as we have already explored for the space-periodic case
in~\cite{Ber2023b}.

We also recall that the classical scaling invariant results imply that if the velocity vector field satisfies 
$\bvn\in L^{2}(0,T;(L^{\infty}(\rplus))^3)$, then weak solutions~are~also~strong; this ensures
uniqueness, regularity, and energy conservation. As a consequence, the
results in this paper should involve time-integrability exponents strictly~less~than~two.

To provide context with recent literature (also see the review in~\cite{Ber2021}), it is
worth mentioning the recent results for the unsteady Navier--Stokes equations \eqref{eq:NSE} obtained by Cheskidov \textit{et
  al.}~\cite{CFS2010} in the setting of (fractional) Sobolev spaces and by Cheskidov and
Luo~\cite{CL2020}, along with Farwig and Taniuchi~\cite{FT2010}. The sharpest known result
in the Besov space setting appears to be as follows: suppose that $1\leq \beta<p\leq\infty$ are
such~that~$\frac{2}{p}+\frac{1}{\beta}<1$.~If
\begin{equation}
    \label{eq:Cheskidov-Luo}
    \bvn\in L^{\beta}_{w}(0,T;(B^{\frac{2}{p}+\frac{2}{\beta}-1}_{p,\infty}(\tore))^3)\,,    
\end{equation}
then energy conservation holds, where
$L^{\beta}_{w}\hspace{-0.175em}=\hspace{-0.175em}L^{\beta,\infty}\hspace{-0.1em}$
denotes the  Marcinkiewicz~\mbox{$L^{\beta}$-space} and
$B^{\alpha}_{pq}(\tore)$ denotes periodic Besov functions. This result includes as sub-cases the
classical results by Prodi~\cite{Pro1959}~and~Lions~\cite{Lio1960}, and
$\bvn\in L^{\beta}_{w}(0,T;\smash{(B^{\smash{1/3}}_{3,\infty}(\tore))^3})$, for all $\beta>3$, represents the
borderline case. Recent extensions are based on conditions on the gradient 
$\nabla\bvn\in L^{\smash{5q/(5q-6)}}(0,T;(L^{q}(\Omega))^{3\times 3})$, where $q>\frac{9}{5}$ and  $\Omega\subseteq\R^{3}$,
see~\cite{BC2020} and see Beir\~ao da Veiga and Yang~\cite{BY2019}, for results in a
wider range of exponents. The latter results, which hold true for the case with homogeneous
Dirichlet condition, can be translated (resorting to the Sobolev--Morrey embedding) in terms of
H\"older spaces as follows:
\begin{equation*}
    \bvn \in L^{\frac{5}{3+2\alpha}}(0,T;(C^{0,\alpha}(\Omega ))^3)\,,    
\end{equation*}
where $\alpha\in (0,1]$ and $\Omega\subseteq \mathbbm{R}^3$,
which have the same scaling as condition~\eqref{eq:Cheskidov-Luo}.

Recently, in~\cite[\S~5]{Ber2023b}, we established in the space-periodic case results
directly in the class of H\"older continuous functions proving as sufficient condition for
the energy conservation that the velocity vector field satisfies
\begin{equation}
  \label{eq:holder-periodic}
  \bvn\in L^{\frac{2}{1+\alpha}}_{\textup{loc}}(0,T;({\dot C}^{0,\alpha}(\tore))^3)\,.
\end{equation}
We also considered the half-space case with Dirichlet conditions proving sub-optimal
results with partial H\"older regularity (\textit{i.e.}, H\"older regularity only with respect to
two variables), \textit{cf}.~\cite[\S~6]{Ber2023b}.

Now we can formulate the main result for weak solutions to the Dirichlet boundary initial value
problem~\eqref{eq:NSE}. To do so we introduce the
space $X_{s,r}$, which is defined as the closure of smooth and
compactly supported divergence-free functions in $\rplus$, with
respect to the Besov norm
of $B^{2-2/r}_{rs}(\rplus)$.
\begin{theorem}
  \label{thm:main}
  Let $T \in (0,\infty)$,  $\bvn_0\in L^2_{0,\sigma}(\rplus)$, and let  
  $\bvn\in L^2(0,T;W^{1,2}_{0,\sigma}(\rplus))$
  $\cap \, L^\infty(0,T;L^2_{0,\sigma}(\rplus))$ be a corresponding Leray--Hopf (weak) solution (in the sense of Definition \ref{def:leray_hopf}). 
  If for some $\alpha\in (0,1)$  and some $\beta>\beta_0=\frac{6}{3+2\alpha}$ 
  \begin{align}
    \bvn\in    L^{\beta}(0,T;(C^{0,\alpha}(\rplus))^3)\,,
    \label{eq:Holder1-3}
  \end{align}
  and for some $r\in(\frac{4}{2+\beta},\frac{4}{4-\beta})\ 
 \text{and } s:=\frac{2r\beta}{4r+r \beta-4}$
  \begin{equation}
  \bvn_{0}\in X_{s,r}\,,
  \label{eq:Holder1-3.2} 
\end{equation}
%
  then it follows that for every
  $t\in [0,T]$, holds the global~energy~equality
  \begin{align}
    \label{eq:energy-equality}
    \frac{1}{2}\|\bvn(t)\|_{L^2(\rplus)}^{2}+\nu \int_{0}^{t}{\|\nabla
    \bvn(\tau)\|^{2}_{L^2(\rplus)}\,\mathrm{d}\tau}=\frac{1}{2}\|\bvn(0)\|_{L^2(\rplus)}^{2}\,. 
  \end{align}
\end{theorem}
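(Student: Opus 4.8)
The strategy is the classical commutator/mollification scheme adapted to the half-space with Dirichlet data, using a smoothing operator built from spatial convolution combined with a translation in the normal direction (pushing into $\rplus$), so that the regularized velocity $\bvn^\eta$ is divergence-free, smooth, and vanishes on $\partial\rplus$. First I would test the weak formulation with $\bvn^\eta$ (equivalently, mollify the equation and pair with $\bvn$), obtaining for a.e. $t$ the identity
\begin{equation*}
\frac{1}{2}\frac{\de}{\de t}\int_{\rplus}\bvn\cdot\bvn^\eta\dx
 + \nu\int_{\rplus}\nabla\bvn:\nabla\bvn^\eta\dx
 + \int_{\rplus}([\nabla\bvn]\bvn)\cdot\bvn^\eta\dx
 = \langle\partial_t\bvn,\bvn^\eta\rangle - \frac{1}{2}\frac{\de}{\de t}\int_{\rplus}\bvn\cdot\bvn^\eta\dx + (\text{error}),
\end{equation*}
and the whole point is to show every term converges, as $\eta\to0$, to the corresponding term in a distributional energy balance, after integrating in time against a suitable cutoff and then removing the cutoff. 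The viscous term is routine since $\bvn\in L^2(0,T;W^{1,2})$ and $\nabla\bvn^\eta\to\nabla\bvn$ in that space. The nonlinear term is where the H\"older hypothesis \eqref{eq:Holder1-3} enters: writing it as a Constantin--E--Titi-type commutator, one bounds it by (roughly) $\eta^{\alpha}\,\|\bvn(t)\|_{C^{0,\alpha}}^{2}\,\|\nabla\bvn(t)\|$-type quantities, plus boundary-layer contributions coming from the translation, which are controlled because $\bvn$ vanishes on $\partial\rplus$ and is H\"older, so that $|\bvn(x)|\lesssim \eta^{\alpha}$ in an $\eta$-neighborhood of the boundary. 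Integrating in time and using H\"older in $t$ with the exponents dictated by $\beta>\beta_0=\frac{6}{3+2\alpha}$ makes this term vanish in the limit.

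The genuinely new obstacle, as the introduction flags, is the time-derivative term: because the smoothing operator is \emph{not} self-adjoint, one does not get the clean cancellation $\langle\partial_t\bvn,\bvn^\eta\rangle = \frac{1}{2}\frac{\de}{\de t}\|\bvn^{\eta/\sqrt2}\|^2$ that is available in the periodic case, and instead there is a leftover commutator involving $\partial_t\bvn$ paired against $(\bvn^\eta - $ adjoint-smoothed $\bvn)$. To handle this I would invoke maximal parabolic regularity for the unsteady Stokes system: rewriting \eqref{eq:NSE} as a Stokes problem with right-hand side $-[\nabla\bvn]\bvn$, and using that $\bvn_0\in X_{s,r}$ (the precise trace/Besov class appearing in \eqref{eq:Holder1-3.2}) together with the H\"older bound on $\bvn$ to control the forcing in the relevant $L^s(0,T;L^r)$-type space, one obtains $\partial_t\bvn,\nabla^2\bvn\in L^s(0,T;L^r(\rplus))$ with quantitative decay of the appropriate fractional derivatives. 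This extra regularity is exactly what is needed to estimate the non-self-adjoint error term: one shows it is $O(\eta^{\gamma})$ for some $\gamma>0$ by pairing $\partial_t\bvn\in L^s L^r$ against a difference of smoothings of $\bvn$ that is small in the dual space $L^{s'}L^{r'}$, the gain of smallness again coming from H\"older continuity in space. The bookkeeping that the exponent ranges $r\in(\frac{4}{2+\beta},\frac{4}{4-\beta})$ and $s=\frac{2r\beta}{4r+r\beta-4}$ are simultaneously (i) admissible for maximal regularity, (ii) compatible with the H\"older-space integrability $\beta$, and (iii) sufficient to close the commutator estimate is the main technical burden.

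Once all four terms (time-derivative, viscous, nonlinear, and the translation-induced boundary errors) are shown to pass to the limit, one arrives at the distributional identity $\frac{\de}{\de t}\frac12\|\bvn(t)\|_{L^2}^2 + \nu\|\nabla\bvn(t)\|_{L^2}^2 = 0$ in $\mathcal D'(0,T)$; combined with the Leray--Hopf energy inequality (which gives one inequality for all $t$ and the strong continuity of $t\mapsto\|\bvn(t)\|_{L^2}$ at $t=0$ from $\bvn_0$), this upgrades to the pointwise equality \eqref{eq:energy-equality} for every $t\in[0,T]$. I expect the hardest part to be the construction and estimation of the translated-mollified velocity near $\partial\rplus$ in tandem with the non-self-adjointness correction — i.e.\ ensuring the boundary layer of width $\eta$ contributes a strictly positive power of $\eta$ in \emph{all} three nonlinear-type terms simultaneously, uniformly in $t$, and only then integrating in time.
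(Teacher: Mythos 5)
Your plan follows essentially the same route as the paper: the convolution--translation smoothing that preserves the divergence-free constraint and the Dirichlet condition, the Constantin--E--Titi commutator bound for the convective term, and maximal parabolic regularity for the Stokes system (with $\bvn_0\in X_{s,r}$ and the $L^\beta(C^{0,\alpha})$ bound controlling the forcing) to put $\partial_t\bvn$ in $L^s(0,T;L^r)$ and kill the non-self-adjointness error in the time-derivative term. The only inaccuracy is bookkeeping: the threshold $\beta>\beta_0=\tfrac{6}{3+2\alpha}$ is forced by that time-derivative commutator estimate (Proposition~\ref{prop:timed}), not by the convective term, which closes already for $\beta\geq\tfrac{2}{1+\alpha}$ exactly as in the periodic case.
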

%
The hypotheses on the initial datum could alternatively be also 
expressed by means of a power of $A_{r}$, the Stokes operator acting in
a subset of $(L^{r}(\rplus))^3$, as defined by Sohr
(\textit{cf}.~\cite{Soh2001}). The extra assumption on the initial
datum is needed to ensure proper summability of the solution in an
unbounded domain and does not
imply that the solution belongs to a regularity class.

It is important to mention
that %
the treatment of the convective term allows, in the case of a (homogeneous) Dirichlet boundary condition, for the same exponents
as in the space-periodic case~\eqref{eq:holder-periodic}. However, the technical
obstruction to achieve, in the case of a (homogeneous) Dirichlet boundary condition, the same results as
in~\eqref{eq:holder-periodic} arises from the estimates we are forced to obtain on the
time-derivative (and, consequently, on the gradient of the pressure), as done in
Proposition~\ref{prop:timed}. Moreover, the condition on the initial datum \eqref{eq:Holder1-3.2} is due to the
unbounded domain. In the case of a bounded domain, the assumptions on the initial datum can
be relaxed and one can also handle in the time variable the ``local'' space~$L^{\beta}_{\textup{loc}}$~in~\eqref{eq:Holder1-3}. However, this result needs a different and more technical
approach~to~the~smoothing, see the forthcoming paper~\cite{BKR2024b}, to overcome all the
technical problems and restrictions which arise when trying to regularize divergence-free
functions vanishing at the boundary, as pointed out already in~\cite{Ber2023b}.

\medskip

\noindent\textbf{Plan of the paper:} In Section~\ref{sec:preliminaries}, we
set the notation and recall some basic~\mbox{results}. In Section~\ref{sec:regularity}, we
present an alternative weak formulation of the unsteady Navier--Stokes equations \eqref{eq:NSE} and  discuss the improved regularity of the time
derivative which can be obtained using parabolic maximal regularity results for the
unsteady Stokes equations. In Section~\ref{sec:mollification}, we introduce the smoothing by
convolution-translation which preserves simultaneously the incompressibility constraint and the homogeneous
Dirichlet boundary condition. Basic properties of this smoothing operator~are~established. Eventually, in Section~\ref{sec:NSE}, the main result of the paper about energy
conservation for  Leray--Hopf (weak) solutions that are  H\"older continuous with respect to the space variable  is proved.
%
%
\section{Notation and Preliminaries}
\label{sec:preliminaries}
In the sequel, by $\rplusd\coloneqq \{x_d> 0\}$, $d\in \mathbbm{N}$, we will denote the half-space. Vectors and
tensors are denoted by boldface letters.  For $r\in [1,\infty]$ and an open set $\Omega\subseteq \mathbbm{R}^d$, $d\in \mathbbm{N}$, 
we will use classical Lebesgue $L^r(\Omega)$ and Sobolev $W^{k,r}(\Omega)$ spaces, respectively. 
Moreover, for an open set $\Omega\subseteq \mathbbm{R}^d$, $d\in \mathbbm{N}$, we will~employ~the~vector~space~of $\alpha$-H\"older continuous functions $\dot{C}^{0,\alpha}(\overline{\Omega})$,
where $\alpha \in (0,1]$, which is a subset of~the~vector~space of continuous functions $u\in C^0(\overline{\Omega})$ such
that~the~\mbox{semi-norm}
\begin{align*}
  [u]_{\dot{C}^{0,\alpha}(\overline{\Omega})}\coloneqq \sup_{x,y\in \overline{\Omega}
  \;:\;
  x\neq y}{\frac{\vert u(x)-u(y)\vert }{\vert x-y\vert^{\alpha}}}\,,
\end{align*}
is finite. We mainly consider Banach space of bounded $\alpha$-H\"older continuous
functions
$C^{0,\alpha}(\overline{\Omega})\coloneqq C_{b}^0(\overline{\Omega})\cap \dot{C}^{0,\alpha}(\overline{\Omega})$,
where $\alpha \in (0,1)$, equipped with the norm
\begin{align*}
  \|u\|_{C^{0,\alpha}(\overline{\Omega})}\coloneqq \sup_{x\in \overline{\Omega}}{\vert u(x)\vert }
  +[u]_{\dot{C}^{0,\alpha}(\overline{\Omega})}\,.
\end{align*}
To define the notions of a weak solution, for $r\in [1,\infty)$ and an~open~set~$\Omega\subseteq \mathbbm{R}^d$, $d\in \mathbbm{N}$, 
we denote by $L^r_{0,\sigma}(\Omega)$ and $W^{1,r}_{0,\sigma }(\Omega)$, the closures of smooth,
compactly supported, and divergence-free vector fields in $(L^{r}(\Omega))^{d}$ and
$(W^{1,r}_{0}(\Omega))^{d}$, respectively. The spaces
$B^{\alpha}_{pq}(\R^{d})$, with $\alpha>0$ and $1<p,q<\infty$,  are the customary Besov spaces, and for the
half-space we set, as in \cite{kang-aniso},
\begin{equation*}
B^{\alpha}_{pq}(\R^d_+):=\left\{rf \; \big|\; f\in B^{\alpha}_{pq}(\R^{d})\right\},
\end{equation*}
where $rf$ denotes the restriction operator from $\R^{d}$ to
$\R^d_+$. The norm is defined as
$\|f\|_{B^{\alpha}_{pq}(\R^d_+)}:=\inf\|F\|_{B^{\alpha}_{pq}(\R^{d})}$,
where the 
infimum is taken over $F\in {B^{\alpha}_{pq}(\R^{d})}$ such that $rF=f$. Finally $X_{s,r}=\overline{((C^\infty_{0,\sigma}(\rplus))^3}^{\|\,.\,\|_{B^{2-2/r}_{rs}}}$.


        Throughout the entire paper, we assume that $T\in (0,\infty)$.
        
Let $X_1$, $X_2$ be Banach spaces such that there exists an embedding $j\colon X_1\to X_2$. Then, a function $u\in L^1_{\textup{loc}}(0,T;X_1)$ is said to have a generalized time derivative with respect~to $j$ if
there exists a function $v\in L^1_{\textup{loc}}(0,T;X_2)$ such that for every $\varphi\in C^\infty_0(I)$, it holds that
\begin{align}
    j\bigg(-\int_0^T{u(t)\,\varphi'(t)\,\mathrm{d}t}\bigg)=\int_0^T{v(t)\,\varphi(t)\,\mathrm{d}t}\quad \text{ in }X_2\,.
\end{align}
In this case, we define $\frac{\mathrm{d}_ju}{\mathrm{d}t}\coloneqq v$. 
For $p,q\in [1,+\infty]$, the Bochner--Sobolev space is defined via
\begin{align*}
    W^{1,p,q}_j(0,T;X_1,X_2)\coloneqq \bigg\{u\in L^p(0,T;X_1) \; \Big|\;\exists \;\frac{\mathrm{d}_ju}{\mathrm{d}t}\in L^q(0,T;X_2)\bigg\}\,.
\end{align*}
In the case $X_1= X_2$, $j=\textup{id}_{X_1}$, and $p=q\in [1,+\infty]$, we abbreviate $\frac{\mathrm{d}}{\mathrm{d}t}\coloneqq\frac{\mathrm{d}_{\textup{id}_{X_1}}}{\mathrm{d}t}$ and $W^{1,q}(0,T;X_1)\coloneqq W^{1,p,q}_{\textup{id}_{X_1}}(0,T;X_1,X_1)$.

\begin{proposition}[non-symmetric integration-by-parts formula]\label{prop:non_sym_pi}
	Let $(X,Y,i)$ be an evolution triple, \textit{i.e.}, $X$ is a Banach space, $Y$ is a Hilbert space with Riesz isomorphism $R_Y\colon Y\to Y^*$, and $i\colon X\to Y$ is  a dense embedding. Moreover, let $j\coloneqq i^*\circ R_Y\circ i\colon Y\to Y^*$ be the canonical embedding and let $1<p\leq q<\infty$. Then, the following statements apply:
	\begin{itemize}
		\item[(i)] \textup{(Weakly) continuous representation:} for each  $u\in W^{1,p,q'}_j(0,T;X,X^*)$ such that the function  $iu\in L^p(0,T;Y)$, defined by $(iu)(t)\coloneqq i(u(t))$ in $Y$ for a.e.\ $t\in (0,T)$, satisfies $iu\in L^\infty(0,T;Y)$ has a weakly continuous representation $i_\omega u\in C^0_\omega(0,T;Y)$. Moreover, for each $u\in W^{1,q,p'}_j(0,T;X,X^*)$,  the function  $iu\in L^p(0,T;Y)$ has a continuous representation $i_c u\in C^0(0,T;Y)$.
		
		\item[(ii)] \textup{Integration-by-parts formula:} For every  $u\in W^{1,p,q'}_j(0,T;X,X^*)$ such that $iu\in L^\infty(0,T;Y)$, $v\in W^{1,q,p'}_j(0,T;X,X^*)$, and $t,t'\in [0,T]$, it holds that
		\begin{align*}
			\int_{t'}^t{\bigg\langle \frac{\mathrm{d}_j u}{\mathrm{d}t}(s),v(s)\bigg\rangle_X\,\mathrm{d}s}=[((i_\omega u)(s),(i_c v)(s))_Y]_{s=t'}^{s=t}
			-\int_{t'}^t{\bigg\langle \frac{\mathrm{d}_j v}{\mathrm{d}t}(s),u(s)\bigg\rangle_X\,\mathrm{d}s}\,.
		\end{align*}
	\end{itemize}
\end{proposition}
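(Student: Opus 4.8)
The plan is to deduce both assertions from the elementary smooth‑function versions of the identity by mollification in time; the only algebraic input is the symmetric identity $\langle ja,b\rangle_X=(ia,ib)_Y$ for $a,b\in X$, which follows from $j=i^*\circ R_Y\circ i$. First I would fix $0<t'<t<T$, choose a standard mollifying kernel $(\rho_\eta)_{\eta>0}$, and set $u_\eta\coloneqq u*\rho_\eta$ and $v_\eta\coloneqq v*\rho_\eta$ on a slightly shrunk interval. Then $u_\eta,v_\eta$ are $C^\infty$ with values in $X$; by linearity of $j$ one has $ju_\eta=(ju)_\eta$, hence $ju_\eta'=(\tfrac{\mathrm{d}_j u}{\mathrm{d}t})_\eta$ and likewise $jv_\eta'=(\tfrac{\mathrm{d}_j v}{\mathrm{d}t})_\eta$; and as $\eta\to0$ one gets $u_\eta\to u$ in $L^p_{\textup{loc}}(0,T;X)$, $v_\eta\to v$ in $L^q_{\textup{loc}}(0,T;X)$, $ju_\eta'\to\tfrac{\mathrm{d}_j u}{\mathrm{d}t}$ in $L^{q'}_{\textup{loc}}(0,T;X^*)$, $jv_\eta'\to\tfrac{\mathrm{d}_j v}{\mathrm{d}t}$ in $L^{p'}_{\textup{loc}}(0,T;X^*)$, and (when $iu\in L^\infty(0,T;Y)$) $\|iu_\eta\|_{L^\infty(0,T;Y)}\le\|iu\|_{L^\infty(0,T;Y)}$ by Jensen's inequality.

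For the smooth approximants the identity will be immediate: $s\mapsto(iu_\eta(s),iv_\eta(s))_Y=\langle ju_\eta(s),v_\eta(s)\rangle_X$ is $C^1$, so the product rule, the symmetry of $(\cdot,\cdot)_Y$, and the fundamental theorem of calculus give
\begin{align*}
[(iu_\eta(s),iv_\eta(s))_Y]_{s=t'}^{s=t}=\int_{t'}^t\langle ju_\eta'(s),v_\eta(s)\rangle_X\,\mathrm{d}s+\int_{t'}^t\langle jv_\eta'(s),u_\eta(s)\rangle_X\,\mathrm{d}s\,.
\end{align*}
I would then let $\eta\to0$: the first integral converges to $\int_{t'}^t\langle\tfrac{\mathrm{d}_j u}{\mathrm{d}t}(s),v(s)\rangle_X\,\mathrm{d}s$ by Hölder's inequality in time with the conjugate pair $(q',q)$, and the second to $\int_{t'}^t\langle\tfrac{\mathrm{d}_j v}{\mathrm{d}t}(s),u(s)\rangle_X\,\mathrm{d}s$ with the conjugate pair $(p',p)$; this is precisely where the hypothesis $p\le q$ enters, since it makes these pairings legitimate and, via $L^{p'}\subseteq L^{q'}$ on the finite interval, also puts $\tfrac{\mathrm{d}_j v}{\mathrm{d}t}\in L^{q'}(0,T;X^*)$. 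What remains is to pass to the limit in the boundary term, which forces us to produce the two distinguished representatives.

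To build $i_\omega u$ (for $u\in W^{1,p,q'}_j(0,T;X,X^*)$ with $iu\in L^\infty(0,T;Y)$), I would note that for every $\varphi\in X$ the scalar function $s\mapsto\langle ju(s),\varphi\rangle_X$ has distributional derivative $s\mapsto\langle\tfrac{\mathrm{d}_j u}{\mathrm{d}t}(s),\varphi\rangle_X\in L^{q'}(0,T)\subseteq L^1(0,T)$, hence coincides a.e.\ with an absolutely continuous function on $[0,T]$; since $|\langle ju(s),\varphi\rangle_X|=|(iu(s),i\varphi)_Y|\le\|iu\|_{L^\infty(0,T;Y)}\|i\varphi\|_Y$ a.e., the density of $iX$ in $Y$ and the Riesz representation theorem produce for each $s\in[0,T]$ an element $i_\omega u(s)\in Y$, bounded by $\|iu\|_{L^\infty(0,T;Y)}$, with $(i_\omega u(s),i\varphi)_Y$ equal to that absolutely continuous representative; the same uniform bound plus density then upgrade $s\mapsto(i_\omega u(s),w)_Y$ to a continuous function for every $w\in Y$, i.e.\ $i_\omega u\in C^0_\omega(0,T;Y)$, and since $iu_\eta(s)\rightharpoonup i_\omega u(s)$ in $Y$ for every $s$ (mollification of a continuous function converges pointwise, plus the uniform bound) while $iu_\eta\to iu$ in $L^p(0,T;Y)$, a subsequence converging a.e.\ identifies $i_\omega u$ with $iu$ a.e. For $v\in W^{1,q,p'}_j(0,T;X,X^*)$, the inclusion $\tfrac{\mathrm{d}_j v}{\mathrm{d}t}\in L^{q'}(0,T;X^*)$ places $v$ in the classical symmetric‑exponent setting for the Gelfand triple $X\hookrightarrow Y\cong Y^*\hookrightarrow X^*$, yielding a continuous representative $i_c v\in C^0(0,T;Y)$ — equivalently, applying the smooth identity above to $v_\eta-v_{\eta'}$ shows $(iv_\eta)_\eta$ is Cauchy in $C^0([0,T];Y)$ — and $iv_\eta(s)\to i_c v(s)$ in $Y$ for every $s$. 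The weak–strong pairing then gives $(iu_\eta(s),iv_\eta(s))_Y\to(i_\omega u(s),i_c v(s))_Y$ for every $s$, so the boundary term converges and the asserted identity holds for all $0<t'<t<T$; finally, $i_\omega u$ being bounded and weakly continuous and $i_c v$ strongly continuous on the closed interval $[0,T]$, both sides extend continuously in $(t',t)$ to $[0,T]^2$, giving the formula for all $t,t'\in[0,T]$.

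The hard part will be the weakly continuous representative $i_\omega u$ together with the passage to the limit in the boundary term: because $u$ need only lie in $L^p(0,T;X)$ with $\tfrac{\mathrm{d}_j u}{\mathrm{d}t}\in L^{q'}(0,T;X^*)$ and $q'$ may be $<2$, one cannot pair $u$ against its own time‑derivative, so $iu$ has no reason to be strongly continuous and only weak continuity survives; it is exactly the extra hypothesis $iu\in L^\infty(0,T;Y)$, together with the asymmetry $p\le q$ which places $v$ back in the classical regime, that renders the weak–strong product $(i_\omega u(s),i_c v(s))_Y$ in the boundary term meaningful and the limiting argument valid.
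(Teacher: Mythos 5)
Your argument is correct. Note that the paper itself does not prove this proposition at all---its ``proof'' is a citation of \cite[Proposition 2.28]{alex-book}---so the comparison is between your self-contained argument and a reference; your route (temporal mollification, the algebraic identity $\langle ja,b\rangle_X=(ia,ib)_Y$, the product rule and fundamental theorem of calculus for the smooth approximants, Hölder passage to the limit in the two integrals, and pointwise weak/strong convergence in the boundary term via the two distinguished representatives) is the standard proof of such non-symmetric formulae and is essentially what the cited monograph does, so it buys the reader an actual argument where the paper only delegates. Two small points to tighten. First, your claim that $p\leq q$ ``is precisely where the pairings become legitimate'' is misplaced: the pairings $\langle\frac{\mathrm{d}_ju}{\mathrm{d}t},v\rangle_X$ and $\langle\frac{\mathrm{d}_jv}{\mathrm{d}t},u\rangle_X$ are Hölder-conjugate by the very definition of $W^{1,p,q'}_j$ and $W^{1,q,p'}_j$, irrespective of the ordering of $p$ and $q$; the hypothesis $p\leq q$ is used exactly and only where you use it next, namely $L^{p'}(0,T;X^*)\hookrightarrow L^{q'}(0,T;X^*)$ on the bounded interval, which places $v$ in the symmetric regime and yields the \emph{strong} continuity of $iv$ needed for the weak-times-strong limit of the boundary term. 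Second, make explicit that both representatives exist on all of $[0,T]$ (the scalar absolutely continuous representatives of $s\mapsto\langle ju(s),\varphi\rangle_X$ extend to the closed interval since their derivatives are in $L^1(0,T)$, and the classical theorem gives $i_cv\in C^0([0,T];Y)$), and that $\varphi\mapsto f_\varphi(s)$ is linear and bounded by $\|iu\|_{L^\infty(0,T;Y)}\|i\varphi\|_Y$ for \emph{every} $s$ (uniqueness of continuous representatives, no separability needed), so that Riesz representation applies and the final extension of the identity from $0<t'<t<T$ to all $t,t'\in[0,T]$ by continuity of both sides is justified. With these clarifications your proof is complete.
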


\begin{proof}
	See \cite[Proposition 2.28]{alex-book}.
      \end{proof}

Since $(W^{1,2}_{0,\sigma }(\Omega),L^2_{0,\sigma }(\Omega),\textup{id}_{W^{1,2}_{0,\sigma }(\Omega)})$ is an evolution triple, the canonical embedding 
\begin{align*}
	e\coloneqq (\textup{id}_{\smash{W^{1,2}_{0,\sigma }(\Omega)}})^*\circ R_{\smash{L^2_{0,\sigma }(\Omega)}}\circ\textup{id}_{\smash{W^{1,2}_{0,\sigma }(\Omega)}}\colon W^{1,2}_{0,\sigma }(\Omega)\to (W^{1,2}_{0,\sigma }(\Omega))^*\,,
\end{align*}
where $(\textup{id}_{\smash{W^{1,2}_{0,\sigma }(\Omega)}})^*\colon  (L^2_{0,\sigma }(\Omega))^*\to (W^{1,2}_{0,\sigma }(\Omega))^*$ is the adjoint operator of the identity mapping $\textup{id}_{\smash{W^{1,2}_{0,\sigma }(\Omega)}}\colon  W^{1,2}_{0,\sigma }(\Omega)\to L^2_{0,\sigma }(\Omega)$ and $R_{\smash{L^2_{0,\sigma }(\Omega)}}\colon L^2_{0,\sigma }(\Omega)\to (L^2_{0,\sigma }(\Omega))^*$ the Riesz isomorphism, is an embedding.

We also recall the definition of weak solution for the three-dimensional unsteady Navier-Stokes
equations \eqref{eq:NSE}.\enlargethispage{4mm}
\begin{definition}[Leray--Hopf (weak) solution]
  \label{def:leray_hopf}
  Let  $\bvn_0\hspace{-0.175em}\in \hspace{-0.175em} L_{0,\sigma}^2(\rplus)$. Then,~a~Lebesgue- measurable
  vector field $\bvn\colon (0,T)\times \rplus\to \mathbbm{R}^3$ is said to be a
  \textup{Leray--Hopf (weak) solution} to the unsteady Navier--Stokes
  equations~\eqref{eq:NSE} with initial datum $\bvn_0$ if \linebreak
  $\bvn\in W^{1,2,\smash{\frac{4}{3}}}_e(0,T;W^{1,2}_{0,\sigma}(\rplus),(W^{1,2}_{0,\sigma}(\rplus))^*)\cap L^\infty(0,T;L_{0,\sigma}^2(\rplus))$ with weakly continuous (not relabeled) representation $\bvn\in C^0_{\omega}([0,T];L^2_{0,\sigma}(\Omega))$
satisfies 
  \begin{itemize}
  \item[(i)] \textup{Weak formulation:} For every $\bphi\in L^4(0,T;W^{1,2}_{0,\sigma}(\rplus))$, 
  it holds that
    \begin{align}
      \label{def:leray_hopf.1}
      \int_0^T\bigg[\bigg\langle \frac{\de_e \bvn}{\de
    t},\bphi\bigg\rangle_{\hspace{-1mm}W^{1,2}_{0,\sigma}(\rplus)}+\nu\int_{\rplus}\nabla\bvn:\nabla\bphi\dx+\int_{\rplus}
  [\nabla\bvn]\bvn\cdot\bphi\dx\bigg]\de\tau=0\,.
    \end{align}
  \item[(ii)] \textup{Energy inequality:} For every $t\in [0,T]$, it holds that
    \begin{align}
      \label{def:leray_hopf.2}
      \frac{1}{2}\|\bvn(t)\|_{L^2(\rplus)}^2+\nu \int_0^t{\|\nabla
      \bvn(\tau)\|_{L^2(\rplus)}^{2}\,\mathrm{d}\tau}\leq
      \frac{1}{2}\|\bvn_0\|_{L^2(\rplus)}^2\,. 
    \end{align}
  \item[(iii)] \textup{Initial condition:} It holds that 
  \begin{align} \label{def:leray_hopf.3}
  	\bvn(0)=\bvn_0\quad\text{ in }L^2_{0,\sigma}(\Omega)\,.
  \end{align}
  \end{itemize} 
\end{definition}
\begin{remark}[Equivalent weak formulation]\label{rem:equiv_form}
	By the non-symmetric integration-by-parts formula (\textit{cf}.\ Proposition \ref{prop:non_sym_pi}),
	the weak formulation \eqref{def:leray_hopf.1} together with the initial condition \eqref{def:leray_hopf.3} is equivalent to the following weak formulation: 
	for every $\bphi\in W^{1,4,2}_e(0,T;W^{1,2}_{0,\sigma}(\rplus),(W^{1,2}_{0,\sigma}(\rplus))^*)$ with $\bphi(T)=\mathbf{0}$ in $L^2_{0,\sigma}(\Omega)$, it holds that
	  \begin{align*} 
		&\int_0^T\bigg[\bigg\langle -\frac{\de_e \bphi}{\de
			t},\bvn\bigg\rangle_{\hspace{-1mm}W^{1,2}_{0,\sigma}(\rplus)}+\nu\int_{\rplus}\nabla\bvn:\nabla\bphi\dx+\int_{\rplus}
			[\nabla\bvn]\bvn\cdot\bphi\dx\bigg]\de\tau\\&=\int_{\rplus}\bvn_0\cdot \bphi(0)\dx\,.
	\end{align*}
\end{remark}
It is well-known (\textit{cf}.~\cite[Theorem 3.1]{Gal2000a}) that for any
$T \in(0,\infty)$ and
${\bvn_0\hspace*{-0.1em}\in\hspace*{-0.1em} L^2_{0,\sigma}(\rplus)}$, there exists a
Leray--Hopf (weak) solution in the sense of Definition~\ref{def:leray_hopf}.  The energy
inequality~\eqref{def:leray_hopf.2} can equally be re-written as the following equality:
for~every~$t\in [0,T]$, it holds that
\begin{equation*}
  \frac{1}{2}\|\bvn(t)\|_{L^2(\rplus)}^{2}+\nu
  \int_0^t{\int_{\rplus}{\epsilon[\bvn](\tau,x)\,\mathrm{d}x}\,\mathrm{d}\tau}=
  \frac{1}{2}\|\bvn_0\|_{L^2(\rplus)}^{2}\,, 
\end{equation*}
where the \textit{total energy dissipation rate} is given by 
\begin{equation*}
	\epsilon[\bvn]\coloneqq \nu \vert \nabla\bvn\vert^2+D(\bvn)\geq0\,,
\end{equation*}
with $D(\bvn)\in M_+(\rplus)$ a non-negative Radon measure. If
${D(\bvn)=0}$, then the energy dissipation arises only from the term with
the viscosity and equality~\eqref{eq:energy-equality}~holds~true.
%
%
\section{Further regularity
  properties of weak solutions}
\label{sec:regularity}

In this section, we establish that given the regularity assumptions~\eqref{eq:Holder1-3} and~\eqref{eq:Holder1-3.2} on the velocity vector field
and  its initial datum, 
respectively, this Leray--Hopf (weak)
solution satisfies higher regularity properties  in mixed spaces
and~an~almost everywhere~\mbox{point-wise} formulation of the unsteady Navier--Stokes equations~\eqref{eq:NSE} (in the sense of Definition~\ref{def:leray_hopf}); see Theorem \ref{thm:giga}.
The main tool to establish the latter will be the maximal regularity for solutions of the
unsteady Stokes equations as, \textit{e.g.}, in Sohr and von
Wahl~\cite{SvW1986} or Chang and Kang \cite{kang-aniso}. For the reader's~convenience,
we give a detailed and alternative proof of this result, without using any
smoothing of the equations, which avoids fine technical problems in the definition of
the Yosida approximation (as in~\cite{SvW1986}) due to the fact that the domain~is~unbounded.

A  technical result we will employ is the following existence, uniqueness, and
norm estimates in mixed spaces for strong solutions to the unsteady Stokes problem.
\enlargethispage{7mm}
  \begin{theorem}
    \label{thm:linear-Stokes}
    Let $\mathbf{f}\in L^{s}(0,T;L^{r}_{0,\sigma}(\rplus))$ with $r,s \in (1,\infty)$, let
    $\bvpsi_0\in X_{s,r}$, where the norm is denoted by $|||
    \,.\,|||_{s,r}:=\|\,.\,\|_{B^{2-2/r}_{sr}}$ and define 
    \begin{equation*}
      W^{2,1}_{s,r}\coloneqq \big\{ \bvphi\in  L^{s}(0,T;(W^{2,r}(\rplus))^3)\mid \partial_t \bvphi\in L^s(0,T;(L^{r}(\rplus))^3)\big\}\,. 
    \end{equation*}
    Then, there exists a unique velocity vector field $\bvpsi\in W^{2,1}_{s,r}$ and a
    non-unique pressure $\lambda\in L^s(0,T;W^{1,r}(\rplus))$, such that
    \begin{equation}
      \label{eq:Stokes}
      \begin{aligned}
        \partial_{t}\bvpsi-\nu\Delta\bvpsi+\nabla \lambda
        &=\mathbf{f}&&\quad\text{ a.e.\ in }(0,T)\times \rplus \,,
        \\
        \dive\bvpsi&=0&&\quad \text{ a.e.\ in }(0,T)\times \rplus \,,
        \\
        \bvpsi&=\mathbf{0}&&\quad \text{ a.e.\ on
        }(0,T)\times\partial\rplus\,,
        \\
        \bvpsi(0)&=\bvpsi_0 &&\quad \text{ a.e.\ in }\rplus \,,
      \end{aligned}
    \end{equation}
    and 
    \begin{equation*}
    \begin{aligned}
          & \int_0^T{\big\{\|\partial_t\bvpsi
      \|_{L^{r}(\rplus)}^{s}+\|\bvpsi\|_{W^{2,r}(\rplus)}^{s}+\|
      \lambda\|_{W^{1,r}(\rplus)}^{s}\big\}\,\de\tau}
    \\&\quad\leq c(r,s,T,\nu)\,\int_0^T\|\mathbf{f}\|_{L^{r}(\rplus)}^{s}\,\de\tau+|||\bvpsi_0|||_{s,r}^{s}\,.
    \end{aligned} 
    \end{equation*}
  \end{theorem}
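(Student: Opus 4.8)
The plan is to establish this as a maximal-regularity statement for the unsteady Stokes semigroup in the solenoidal $L^r$-setting, splitting the problem into a zero-initial-datum part driven by $\mathbf{f}$ and a homogeneous part carrying $\bvpsi_0$. First I would recall the Helmholtz--Leray projection $P_r$ on $(L^r(\rplus))^3$ and the Stokes operator $A_r = -\nu P_r\Delta$ with domain $D(A_r) = (W^{2,r}(\rplus))^3\cap (W^{1,r}_0(\rplus))^3\cap L^r_{0,\sigma}(\rplus)$; it is classical (Solonnikov, Giga, Borchers--Sohr) that $-A_r$ generates a bounded analytic semigroup on $L^r_{0,\sigma}(\rplus)$ and that $A_r$ admits bounded imaginary powers, hence has the $\mathcal{R}$-boundedness needed for maximal $L^s$-regularity on $(0,\infty)$, and a fortiori on $(0,T)$. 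Applying $P_r$ to the first equation of \eqref{eq:Stokes} kills $\nabla\lambda$ and reduces the PDE to the abstract Cauchy problem $\partial_t\bvpsi + A_r\bvpsi = P_r\mathbf{f} = \mathbf{f}$ (since $\mathbf{f}\in L^r_{0,\sigma}$ already), $\bvpsi(0) = \bvpsi_0$.

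Next I would invoke the Da Prato--Grisvard / Dore--Venni theory: for $1<s,r<\infty$, maximal $L^s$-regularity gives a unique $\bvpsi$ with $\partial_t\bvpsi,\ A_r\bvpsi\in L^s(0,T;L^r_{0,\sigma}(\rplus))$ together with the estimate
\begin{equation*}
\int_0^T\big\{\|\partial_t\bvpsi\|_{L^r}^s + \|A_r\bvpsi\|_{L^r}^s\big\}\,\de\tau \leq c(r,s,T,\nu)\int_0^T\|\mathbf{f}\|_{L^r}^s\,\de\tau + c\,\|\bvpsi_0\|_{(D(A_r),L^r_{0,\sigma})_{1-1/s,s}}^s,
\end{equation*}
provided $\bvpsi_0$ lies in the real interpolation space $(D(A_r),L^r_{0,\sigma}(\rplus))_{1-1/s,s}$, the natural trace space. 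The key identification is that this trace space coincides with $X_{s,r}$: indeed $(D(A_r),L^r_{0,\sigma})_{1-1/s,s}$ equals the solenoidal, boundary-vanishing part of $(B^{2-2/s\cdot\! }_{?},\ldots)$ — more precisely, using that $D(A_r)\hookrightarrow W^{2,r}$ with the homogeneous Dirichlet trace, standard interpolation $(W^{2,r},L^r)_{1-1/s,s} = B^{2-2/s}_{r,s}(\rplus)$ combined with the compatibility (the boundary trace survives interpolation down to regularity $>1/r$) yields exactly the closure $X_{s,r}$ of $(C^\infty_{0,\sigma}(\rplus))^3$ in the $B^{2-2/r}_{sr}$-norm as stated; here I would lean on Chang--Kang \cite{kang-aniso} for the precise half-space identification, which is why the theorem cites them. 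Finally, elliptic regularity for the stationary Stokes resolvent recovers the full $W^{2,r}$-norm of $\bvpsi$ from $\|A_r\bvpsi\|_{L^r} + \|\bvpsi\|_{L^r}$, and then $\nabla\lambda = \nu\Delta\bvpsi + \mathbf{f} - \partial_t\bvpsi \in L^s(0,T;(L^r(\rplus))^3)$ is recovered from the momentum equation, with $\lambda$ determined up to a function of $t$ alone (whence non-uniqueness of the pressure), satisfying $\|\lambda\|_{W^{1,r}}\lesssim \|\nabla\lambda\|_{L^r}$ after a suitable normalization; collecting the three estimates gives the asserted bound.

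The main obstacle is the identification of the trace/interpolation space with $X_{s,r} = \overline{(C^\infty_{0,\sigma}(\rplus))^3}^{\,\|\cdot\|_{B^{2-2/r}_{sr}}}$, i.e.\ showing that the real interpolation space $(D(A_r),L^r_{0,\sigma}(\rplus))_{1-1/s,s}$ is precisely this Besov-type space of solenoidal fields with vanishing trace — this requires care about whether the Dirichlet boundary condition persists under interpolation (it does when $2-2/s > 1/r$, i.e.\ in the relevant parameter range, and otherwise the trace condition is vacuous and $X_{s,r}$ absorbs it by density), and about how the solenoidal constraint interacts with the Besov scale in the half-space. The remaining pieces — generation of the analytic semigroup, $\mathcal R$-boundedness / maximal regularity for $A_r$, and stationary $W^{2,r}$-elliptic estimates — are standard and can be cited. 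I would also note that since the paper explicitly wants a proof "without using any smoothing of the equations," the cleanest route is this operator-theoretic one via Dore--Venni rather than via Yosida approximation, so the only genuinely technical work is the interpolation-space bookkeeping above.
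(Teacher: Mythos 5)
You should know that the paper does not actually prove Theorem \ref{thm:linear-Stokes}: the statement is quoted from Chang and Kang \cite{kang-aniso} (with Solonnikov, Giga--Sohr \cite{GS1991}, and Maremonti--Solonnikov cited for the history of the mixed-norm case), and the ``detailed and alternative proof'' announced in Section \ref{sec:regularity} refers to Theorem \ref{thm:giga}, not to this result. So your operator-theoretic sketch is by necessity a different route: it is the standard Da Prato--Grisvard/Dore--Venni maximal-regularity argument for the Stokes operator $A_r=-\nu\mathcal{P}\Delta$ on $L^r_{0,\sigma}(\rplus)$, and in outline it is a legitimate way to obtain the velocity part of the estimate; what it buys over the paper's approach is an actual argument rather than a citation, at the cost of the interpolation bookkeeping you yourself flag as the main obstacle.

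Two points in your plan are genuinely unfinished. First, the trace-space identification: abstract maximal $L^s$-regularity requires $\bvpsi_0\in (L^r_{0,\sigma}(\rplus),D(A_r))_{1-1/s,s}$, and the scalar interpolation $(L^r,W^{2,r})_{1-1/s,s}=B^{2-2/s}_{r,s}$ produces smoothness $2-2/s$ (note also that your formula writes the couple in the reversed order, which under the standard convention would give smoothness $2/s$), whereas the theorem's norm is $\|\cdot\|_{\smash{B^{2-2/r}_{sr}}}$ with smoothness $2-2/r$; you assert that the interpolation space ``yields exactly'' $X_{s,r}$ but never reconcile these exponents, and with the usual conventions they agree only when $r=s$. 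You would have to either track Chang--Kang's anisotropic convention to see which index plays which role (the discrepancy may well originate in the paper's own notation) or prove the identification directly, and in addition show that both the solenoidal constraint and the Dirichlet condition pass through real interpolation so as to give precisely the closure of $(C^\infty_{0,\sigma}(\rplus))^3$. Second, the pressure: on the unbounded half-space there is no Poincar\'e inequality, so ``$\|\lambda\|_{W^{1,r}}\lesssim\|\nabla\lambda\|_{L^r}$ after a suitable normalization'' is false as stated --- adding constants cannot produce an $L^r(\rplus)$ bound from the gradient alone. Obtaining $\lambda\in L^s(0,T;W^{1,r}(\rplus))$ globally, as the theorem claims, requires a genuine construction of a decaying pressure (e.g.\ via the weak Neumann problem applied to $(I-\mathcal{P})\Delta\bvpsi$), and this half-space-specific work is exactly what the paper defers to \cite{kang-aniso}. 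The remaining ingredients of your plan (generation of the analytic semigroup, bounded imaginary powers/$\mathcal{R}$-boundedness, stationary $W^{2,r}$-resolvent estimates) are indeed standard and citable.
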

  %
  The analysis of the unsteady Stokes problem \eqref{eq:Stokes} dates back to
  Solonnikov~\cite{Sol1964} for the case $r=s$. The mixed case $r\not= s$ have been
  studied --among the others and with different methods-- by Solonnikov~\cite{Sol1976},
  Yudovich~\cite{Yud1989}, Giga and Sohr~\cite{GS1991}, Maremonti and
  Solonnikov~\cite{MS1995}, and Chang and Kang \cite{kang-aniso}. Our
  formulation~is~taken from~\cite{kang-aniso}.\enlargethispage{5mm}

The main result we obtain from Theorem~\ref{thm:linear-Stokes} is the following
one (\textit{cf}.~\cite{SvW1986,kang-aniso}, where slightly different
arguments are used and only the case $\frac{2}{r}+\frac{3}{s}=4$ is considered).
\begin{theorem}
  \label{thm:giga}
  Let   
  $\bvn\in L^2(0,T;W^{1,2}_{0,\sigma}(\rplus))\cap L^\infty(0,T;L_{0,\sigma}^2(\rplus))$
 be a  Leray--Hopf  (weak) solution corresponding to  $\bvn_0\in L^2_{0,\sigma}(\rplus)$ (in the sense of Definition~\ref{def:leray_hopf}). Then, from
	\begin{align*}
		\begin{aligned}
                  [\nabla\bvn]\bvn &\in L^{s}(0,T;(L^{r}(\rplus))^3)&&\quad\text{ for some }s,r\in (1,\infty)\,,\\
                  \bvn_{0}&\in X_{s,r}
                  \,, \end{aligned}
	\end{align*}
	it follows that $\bvn\in W^{2,1}_{s,r}$ 
	and that there exists $\pi\in
        L^s(0,T;L^{r}_{loc}(\overline{\rplus}))$, with $\nabla\pi\in
        L^s(0,T;(L^r(\rplus))^3)$, such that  
 \begin{equation}
      \label{eq:Navier-Stokes_strong}
      \begin{aligned}
        \partial_{t}\bvn-\nu\Delta\bvn+[\nabla\bvn]\bvn+\nabla \pi
        &=\mathbf{0}&&\quad\text{ a.e.\ in }(0,T)\times \rplus \,,
        \\
        \dive\bvn&=0&&\quad \text{ a.e.\ in }(0,T)\times \rplus \,,
        \\
        \bvn&=\mathbf{0}&&\quad \text{ a.e.\ on
        }(0,T)\times\partial\rplus\,,
        \\
        \bvn(0)&=\bvn_0 &&\quad \text{ a.e.\ in }\rplus \,.
      \end{aligned}
    \end{equation}
	In addition, it holds that
	\begin{align}\label{eq:Navier-Stokes_strong_estimate}
        \begin{aligned}
          \int_{0}^{T}\big\{\|\partial_t\bvn \|_{L^r(\rplus)}^{s}
            +\|\bvn\|_{W^{2,r}(\rplus)}^{s}+\|\nabla
            \pi\|_{L^r(\rplus)}^{s}\big\}\,\de\tau
          \leq c(r,s,T,\nu)\,|||\bvn_{0}|||_{s,r}^{s}\,.
             \end{aligned}
	\end{align}
\end{theorem}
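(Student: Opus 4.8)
The strategy is to recognize that $\bvn$ itself is the unique strong solution of an \emph{unsteady Stokes problem} with right-hand side $\mathbf{f}\coloneqq-[\nabla\bvn]\bvn$, and then to invoke Theorem~\ref{thm:linear-Stokes} directly. Set $\mathbf{f}\coloneqq-[\nabla\bvn]\bvn\in L^{s}(0,T;(L^{r}(\rplus))^3)$ (by hypothesis this lies in $L^{s}(0,T;L^{r}_{0,\sigma}(\rplus))$ after projecting, or the pressure absorbs the non-solenoidal part); since $\bvn_0\in X_{s,r}$, Theorem~\ref{thm:linear-Stokes} produces a unique $\bvpsi\in W^{2,1}_{s,r}$ and pressure $\lambda\in L^s(0,T;W^{1,r}(\rplus))$ solving \eqref{eq:Stokes} with this data, together with the a~priori estimate, which upon reinserting $\|\mathbf{f}\|_{L^r}\lesssim \|[\nabla\bvn]\bvn\|_{L^r}$ and absorbing gives \eqref{eq:Navier-Stokes_strong_estimate}. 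It then remains to show $\bvn=\bvpsi$ a.e., for then $\bvn$ inherits all the regularity of $\bvpsi$, and \eqref{eq:Navier-Stokes_strong} holds with $\pi\coloneqq\lambda$, noting $\lambda\in L^s(0,T;W^{1,r}(\rplus))\hookrightarrow L^s(0,T;L^r_{\textup{loc}}(\overline{\rplus}))$.

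\emph{Identification $\bvn=\bvpsi$.} Both $\bvn$ and $\bvpsi$ are Leray--Hopf-type (weak) solutions of the same linear Stokes system: $\bvn$ satisfies \eqref{def:leray_hopf.1} with the convective term moved to the right-hand side, i.e.\ it is a weak solution of the Stokes equations with force $\mathbf{f}$ and datum $\bvn_0$; and $\bvpsi$, being a strong solution, is also a weak solution of the same problem (test \eqref{eq:Stokes} against $\bphi\in W^{1,2}_{0,\sigma}(\rplus)$ and integrate by parts in space, the pressure and divergence terms dropping out). The difference $\bvw\coloneqq\bvn-\bvpsi$ is then a weak solution of the homogeneous Stokes problem with zero initial datum. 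Testing with $\bvw$ itself — which is admissible since $\bvpsi\in W^{2,1}_{s,r}$ gives enough regularity for the non-symmetric integration-by-parts formula (Proposition~\ref{prop:non_sym_pi}), after checking the duality pairings make sense — and using the energy identity yields $\frac12\|\bvw(t)\|_{L^2}^2+\nu\int_0^t\|\nabla\bvw\|_{L^2}^2=0$, hence $\bvw\equiv\mathbf{0}$. One must be slightly careful that $\bvpsi$ lies in the right space for this energy argument; this is where one either uses the mixed-norm embeddings to place $\bvpsi$ in $L^2(0,T;W^{1,2}_{0,\sigma})\cap L^\infty(0,T;L^2_{0,\sigma})$ locally in time, or argues by a density/approximation of the test function, exploiting that both solutions already have the Leray--Hopf regularity.

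\emph{Main obstacle.} The genuinely delicate point is the compatibility of the linear theory (Theorem~\ref{thm:linear-Stokes}), which is stated for strong solutions with data in $X_{s,r}$ and force in $L^s(0,T;L^r_{0,\sigma})$, with the \emph{given} weak solution $\bvn$ whose known regularity is only $L^2(0,T;W^{1,2}_{0,\sigma})\cap L^\infty(0,T;L^2_{0,\sigma})$ with $\tfrac{\mathrm d_e\bvn}{\mathrm dt}\in L^{4/3}(0,T;(W^{1,2}_{0,\sigma})^*)$. One must justify that the weak Stokes problem with force $\mathbf{f}$ and datum $\bvn_0$ has a \emph{unique} solution in the class where \emph{both} $\bvn$ and the strong solution $\bvpsi$ live — i.e.\ a weak-strong uniqueness statement for the \emph{linear} Stokes system across two a~priori different function-space settings. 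This is handled by the energy method above, but the rigor lies in verifying that the pairing $\langle\tfrac{\mathrm d_e\bvw}{\mathrm dt},\bvw\rangle$ is well-defined and equals $\tfrac12\tfrac{\mathrm d}{\mathrm dt}\|\bvw\|_{L^2}^2$, which in turn requires knowing $\bvw$ (equivalently $\bvpsi$) has a time derivative in a space dual to where $\bvw$ itself sits — precisely the kind of bookkeeping Proposition~\ref{prop:non_sym_pi} is designed for. The remaining algebra (translating the Besov a~priori bound into \eqref{eq:Navier-Stokes_strong_estimate} and checking $\nabla\lambda\in L^s(0,T;(L^r(\rplus))^3)$) is routine.
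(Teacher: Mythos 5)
Your first step coincides with the paper's: solve the unsteady Stokes system with force $\mathcal{P}\big[-[\nabla\bvn]\bvn\big]$ and initial datum $\bvn_0\in X_{s,r}$ via Theorem~\ref{thm:linear-Stokes}, obtaining $\bvpsi\in W^{2,1}_{s,r}$, and reduce everything to the identification $\bvn=\bvpsi$. The gap is in how you carry out that identification. Your weak--strong uniqueness argument tests the difference $\bvw=\bvn-\bvpsi$ with itself and invokes the energy identity, but on the half-space (infinite measure) there is no reason why $\bvpsi\in W^{2,1}_{s,r}$ for general $s,r\in(1,\infty)$ — and in the intended application $s,r<2$ — should lie in the energy class $L^2(0,T;W^{1,2}_{0,\sigma}(\rplus))\cap L^\infty(0,T;L^2_{0,\sigma}(\rplus))$: neither $L^r(\rplus)\subseteq L^2(\rplus)$ nor $L^s(0,T)\subseteq L^2(0,T)$ is available in the needed direction, and interpolation/Sobolev embeddings of $W^{2,r}(\rplus)$ do not give $\nabla\bvpsi\in L^2((0,T)\times\rplus)$. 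Consequently $\|\bvw(t)\|_{L^2}$, the viscous cross term $\int\nabla\bvn:\nabla\bvpsi\dx$, and the pairing $\langle\tfrac{\mathrm{d}\bvw}{\mathrm{d}t},\bvw\rangle$ need not be defined, and Proposition~\ref{prop:non_sym_pi} cannot be applied to $\bvw$; your suggested fixes (``mixed-norm embeddings locally in time'' or ``density of test functions'') do not address this, since the failure is in spatial and temporal integrability, not smoothness. The paper avoids exactly this by a duality argument: for arbitrary $\mathbf{\Phi}\in (C^\infty_0((0,T)\times\rplus))^3$ it solves the \emph{backward} Stokes problem with data $\mathcal{P}[-\mathbf{\Phi}]$ in \emph{all} mixed norms $\bigcap_{q,p}W^{2,1}_{q,p}$, shows both $\bvn$ and $\bvpsi$ satisfy $\int_0^T\!\int_{\rplus}(\bvn-\bvpsi)\cdot(\partial_t\bphi+\nu\Delta\bphi)\dx\,\de\tau=0$ (each pairing is legitimate because the test function has every integrability), and then removes the pressure term $\nabla\eta$ by orthogonality to conclude $\int_0^T\!\int_{\rplus}(\bvn-\bvpsi)\cdot\mathbf{\Phi}\dx\,\de\tau=0$, hence $\bvn=\bvpsi$. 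The paper's remark that the bounded-domain case is ``greatly simplified'' because $L^r(\Omega)\subset L^p(\Omega)$ for $p<r$ confirms that the integrability bookkeeping you deferred is precisely the crux in the half-space.

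A smaller point: with the projected force, the strong equation satisfied after identification is $\partial_t\bvn-\nu\Delta\bvn+\mathcal{P}\big[[\nabla\bvn]\bvn\big]+\nabla\lambda=\mathbf{0}$, so you cannot simply set $\pi\coloneqq\lambda$ in \eqref{eq:Navier-Stokes_strong}. You must reintroduce the gradient part of the convective term, $[\nabla\bvn]\bvn-\mathcal{P}\big[[\nabla\bvn]\bvn\big]=\nabla\lambda_{\bvn}$, and take $\pi=\lambda+\lambda_{\bvn}$; this is also the reason the theorem asserts only $\pi\in L^s(0,T;L^r_{loc}(\overline{\rplus}))$ while $\nabla\pi\in L^s(0,T;(L^r(\rplus))^3)$.
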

\begin{remark}
  The above result holds for any domain for which we have the
  parabolic maximal  regularity result
  from Theorem~\ref{thm:linear-Stokes} and the validity of the Helmholtz decomposition,
  e.g., beside the full and half-space, it is also valid for smooth bounded and exterior domains. In particular, for a  bounded $C^2$-domain $\Omega$, the proof can be greatly simplified since any bound in $L^r(\Omega)$ implies also corresponding bounds in $L^{p}(\Omega)$ for all $1\leq p<r$, making the steps of the duality argument easier to be rigorously justified. 
\end{remark}
\begin{remark}\label{rem:bochner_derivative}
  Resorting to \cite[Proposition 2.6.1]{Droniou2023}, for every $s,r\in (1,\infty)$, we find that the inclusion
  \begin{align*}
  	W^{2,1}_{s,r}\subseteq W^{1,s}(0,T;(L^r(\rplus))^3)\,,
  \end{align*}
  applies, and it holds 
  \begin{align*}
  	\frac{\de\bvn}{\de t}=\partial_{t}\bvn\quad\text{  in }L^s(0,T;(L^r(\rplus))^3)\,.
  \end{align*}
  Moreover, since in the definition of Leray-Hopf (weak) solution we have
  the quantity $\frac{\de_{e}\bvn}{\de t}$ we observe that they coincide
  also with this, at least in the sense of divergence-free distributions, that
  is
\begin{equation*}
        \int_0^T\bigg\langle \frac{\de\bvn}{\de
    t},\bphi\bigg\rangle\,\de\tau=      \int_0^T\bigg\langle \frac{\de_e\bvn}{\de
    t},\bphi\bigg\rangle\,\de\tau=-\int_{0}^{T}\int_{\rplus}\left\{\nu\nabla\bvn:\nabla\bphi+
  [\nabla\bvn]\bvn\cdot\bphi\right\}\dx\de \tau\,,
\end{equation*}
valid  for $\bphi\in L^{\max\{4,s'\}}(0,T;C^{\infty}_{0,\sigma}(\rplus))$. 
%
\end{remark}
  \begin{proof}[Proof (of Theorem~\ref{thm:giga})]
  	To begin with, let $(\bvpsi,\lambda)^\top\in
        W^{2,1}_{s,r}\times L^s(0,T;W^{1,r}(\rplus))$,  be a strong
        solution of the unsteady Stokes equations \eqref{eq:Stokes}
        with right-hand side $\mathcal{P}\mathbf{f}\coloneqq \mathcal{P}\big[-[\nabla\bvn]\bvn\big]
        \in L^{s}(0,T;L^{r}_{0,\sigma}(\rplus))$ and initial datum $\bvn_0\in
        X_{s,r}$. Note that since $[\nabla\bvn]\bvn$ is not
        divergence-free, we need to apply the Helmholtz projection
        operator ${\mathcal{P}:\ (L^{p}(\rplus))^{3}\to
        L^{p}_{0,\sigma}(\rplus)}$, for $p \in (1,\infty)$, which is defined in
        $(L^{p}(\rplus))^{3}$, $p \in (1,\infty)$, as well as in space of more regular
        functions, by solving a weak Neumann problem, cf.~\cite{Gal2000a}. Note  also that
        $\mathcal{P}=\text{id}$ in both $L^p_{0,\sigma}(\rplus)$ and $W^{1,p}_{0,\sigma}(\rplus)$~for~all~${p\in (1,\infty)}$.

        The existence of such $(\bvpsi,\lambda)^\top$ is provided by Theorem \ref{thm:linear-Stokes}, \textit{i.e.}, it holds that 
  	\begin{equation}\label{thm:giga.1}
  		\begin{aligned}
  			\partial_{t}\bvpsi-\nu\Delta\bvpsi+\nabla \lambda
  			&=-\mathcal{P}\big[[\nabla\bvn]\bvn\big]&&\quad\text{ a.e.\ in }(0,T)\times \rplus \,,
  			\\
  			\dive\bvpsi&=0&&\quad \text{ a.e.\ in }(0,T)\times \rplus \,,
  			\\
  			\bvpsi&=\mathbf{0}&&\quad \text{ a.e.\ on
  			}(0,T)\times\partial\rplus\,,
  			\\
  			\bvpsi(0)&=\bvn_0 &&\quad \text{ a.e.\ in }\rplus \,.
  		\end{aligned}
  	\end{equation}
	Next, let  $\bphi\in \bigcap_{q,p\in (1,\infty)}{{\mathaccent23 W^{2,1}_{q,p}}}$ be fixed, but arbitrary,
	where for every $q,p\in (1,\infty)$, we define
	\begin{align}
		{\mathaccent23 W^{2,1}_{q,p}}\coloneqq \left\{\bphi\in W^{2,1}_{q,p}\ \Bigg|\;
		\begin{aligned}
		&\bphi(T)=\mathbf{0}\text{ a.e.\ in }\rplus\,,\\& \bphi(\tau)\in W^{1,p}_{0,\sigma}(\rplus)\text{ for all }\tau\in (0,T)\end{aligned}\right\}\,.
	\end{align}

        On the one hand, if we integrate the inner product of \eqref{thm:giga.1}$_1$ and $\bphi$ over $(0,T)\times \rplus$, we find that
  	\begin{equation*}
  		\int_0^T\int_{\rplus} \bvpsi\cdot(\partial_t\bphi+\nu\Delta\bphi)\dx \de\tau=-\int_{\rplus}
  		\bvn_0\cdot\bphi(0)\dx+\int_0^T\int_{\rplus}[\nabla\bvn]\,\bvn\cdot\bphi\dx \de\tau\,,
  	\end{equation*}
  	where we used the integration-by-parts formula
        \cite[Proposition 2.5.2]{Droniou2023} (which is allowed in
        view of $\bvpsi\in W^{1,s}(0,T;(L^r(\rplus))^3)$ and
        $\bphi\in W^{1,s'}(0,T;(L^{r'}(\rplus))^3)$ and Remark
        \ref{rem:bochner_derivative}) in time, Green's second identity
        for Sobolev functions in space for a.e.\ $\tau\in (0,T)$
        (which is allowed since $\bvpsi(\tau)\in (W^{2,r}(\rplus))^3$
        and $\bphi(\tau)\in (W^{2,r'}(\rplus))^3$
        for~a.e.~$\tau\in (0,T)$), and $\bphi(T)=\mathbf{0}$ a.e.\ in
        $\rplus$ as well as $\bvpsi=\bphi=\mathbf{0}$~a.e.~on~$(0,T)\times\partial\rplus$. Moreover, we used  that, by
        the properties of the Helmholtz decomposition~(cf.~\cite{Soh2001}) 
        \begin{equation*}
        \int_{\rplus}\mathcal{P}\big[[\nabla\bvn]\,\bvn\big]\cdot\bphi\dx=\int_{\rplus}[\nabla\bvn]\,\bvn\cdot\mathcal{P}\bphi\dx=\int_{\rplus}[\nabla\bvn]\,\bvn\cdot\bphi\dx\,.
      \end{equation*}
  	
  	On the other hand, if we test the weak formulation in Remark
        \ref{rem:equiv_form} with $\bphi$  (which is allowed since in
        particular $\bphi\in W^{1,4,2}(0,T;W^{1,2}_{0,\sigma}(\rplus),(W^{1,2}_{0,\sigma}(\rplus))^*)$), we~find~that
  	\begin{equation*}
  	\int_0^T\int_{\rplus} \bvn\cdot(\partial_t\bphi+\nu\Delta\bphi)\dx \de\tau=-\int_{\rplus}
  	\bvn_0\cdot\bphi(0)\dx+\int_0^T\int_{\rplus}[\nabla\bvn]\,\bvn\cdot\bphi\dx \de\tau\,,
  \end{equation*}
  where we used that $\frac{\mathrm{d}_e\bphi}{\mathrm{d}t}=e(\partial_t\bphi)$ in $L^2(0,T;(W^{1,2}_{0,\sigma}(\rplus))^*)$, 
  Green's second identity for Sobolev functions in space for a.e.\ $\tau\in (0,T)$ (which is allowed since $\bvn(\tau)\in (W^{1,2}(\rplus))^3$ and $\bphi(\tau)\in (W^{2,2}(\rplus))^3$ for a.e.\ $\tau\in (0,T)$), and due to  $\bphi(T)=\mathbf{0}$ a.e.\ in $\rplus$ as well as  $\bvn=\mathbf{0}$ a.e.\ on $(0,T)\times\partial\rplus$.
  
  In summary, for every
  $\bphi\in \bigcap_{q,p\in (1,\infty)}{{\mathaccent23 W^{2,1}_{q,p}}}$, we have that
  \begin{align}\label{thm:giga.4} 
    \int_0^T\int_{\rplus} ( \bvn-\bvpsi)\cdot(\partial_t\bphi+\nu\Delta\bphi)\dx \de\tau=0\,.
  \end{align} 
  %
	Next, let $\mathbf{\Phi}\in (C^\infty_0((0,T)\times\rplus))^3$ be fixed, but arbitrary. Then, let
$(\widetilde{\bphi}, \widetilde{\eta} )^\top\in \cap_{q,p\in (1,\infty)}{W^{2,1}_{q,p}\times L^q(0,T;W^{1,p}(\rplus))}$ be a strong solution to 
 the unsteady Stokes equations \eqref{eq:Stokes} with right-hand side $\mathbf{f}\coloneqq
 \mathcal{P}\big[-\mathbf{\Phi}(T-\cdot)\big]$ belonging to the space $
 \bigcap_{q,p\in (1,\infty)}
  L^q(0,T;L_{0,\sigma}^p(\rplus))$ and initial datum $\bvpsi_0\coloneqq \mathbf{0}$ provided by Theorem~\ref{thm:linear-Stokes}, \textit{i.e.}, it holds that
\begin{equation*}
  \begin{aligned}
    \partial_{t}\widetilde{\bphi}-\nu\Delta\widetilde{\bphi}+\nabla
    \widetilde{\eta} &= \mathcal{P}\big[-\mathbf{\Phi}(T-\cdot)\big]&&\quad\text{ a.e.\ in }(0,T)\times \rplus \,,
    \\
    \dive\widetilde{\bphi}&=0&&\quad \text{ a.e.\ in }(0,T)\times \rplus \,,
    \\
    \widetilde{\bphi}&=\mathbf{0}&&\quad \text{ a.e.\ on }(0,T)\times\partial\rplus\,,
    \\
    \widetilde{\bphi}(0)&=\mathbf{0} &&\quad \text{ a.e.\ in }\rplus \,.
  \end{aligned}
\end{equation*}
Then,
$(\bphi,\eta)^\top\coloneqq
(\widetilde{\bphi}(T-\cdot),\widetilde{\eta}(T-\cdot))^\top\in
\bigcap_{q,p\in (1,\infty)}{{\mathaccent23 W}^{2,1}_{q,p}\times
  L^q(0,T;W^{1,p}(\rplus))}$~solves the backward unsteady Stokes
equations with the divergence-free right-hand side
$ \mathcal{P}[-\mathbf{\Phi}]$ 
and final datum $\bvpsi_{T}\coloneqq \mathbf{0}$, \textit{i.e.}, it
holds that
\begin{equation}
	 \label{eq:Stokes-backward}
	\begin{aligned}
		-\partial_{t}\bphi-\nu\Delta\bphi+\nabla \eta &=\mathcal{P}\big[-\boldsymbol{\Phi}\big]&&\quad\text{ a.e.\ in }(0,T)\times \rplus \,,
		\\
		\dive\bphi&=0&&\quad \text{ a.e.\ in }(0,T)\times \rplus \,,
		\\
		\bphi&=\mathbf{0}&&\quad \text{ a.e.\ on }(0,T)\times\partial\rplus\,,
		\\
		\bphi(T)&=\mathbf{0} &&\quad \text{ a.e.\ in }\rplus \,.
	\end{aligned}
\end{equation}
Since $\eta\in L^q(0,T;W^{1,p}(\rplus))$ for all $p,q \in (1,\infty)$,  $\bvn\in L^2(0,T;W^{1,2}_{0,\sigma}(\rplus))$, and $\bvpsi\in
L^s(0,T;W^{1,r}_{0,\sigma}(\rplus))$, we get, using integration by parts,
\begin{align}\label{eq:0}
	  \int_0^T\int_{\rplus}\nabla \eta\cdot \bvn\dx\,
  \de\tau=\int_0^T\int_{\rplus}\nabla \eta\cdot \bvpsi\dx\,
  \de\tau=0\,. 
\end{align}
Thus,
for~every~${\mathbf{\Phi}\in (C_0^\infty((0,T)\times \rplus))^3}$ we
obtain from \eqref{thm:giga.4}--\eqref{eq:0}, also using
$\mathcal{P}\bvn=\bvn$ and $\mathcal{P}\bvpsi=\bvpsi$, that 
\begin{equation*}
  \begin{aligned}
    0 &= \int_0^T\int_{\rplus}(
    \bvn-\bvpsi)\cdot(\partial_{t}\bphi+\nu\Delta\bphi)\,\dx \,\de\tau
    \\
    & =\int_0^T\int_{\rplus}(
    \bvn-\bvpsi)\cdot\mathcal{P}\mathbf{\Phi}\dx\, \de\tau=
    \int_0^T\int_{\rplus}\mathcal{P}(
    \bvn-\bvpsi)\cdot\mathbf{\Phi}\dx\, \de\tau
    \\
    & = \int_0^T\int_{\rplus}( \bvn-\bvpsi)\cdot\mathbf{\Phi}\dx\,
    \de\tau\,.
  \end{aligned}
\end{equation*}
Thus, the fundamental theorem in the calculus of variations
yields  ${\bvn=\boldsymbol{\psi}}$ a.e.\ in $(0,T)\times \rplus$.
This identification implies that $\bvn\in W^{2,1}_{s,r}$.  Note that
$[\nabla\bvn]\,\bvn-\mathcal{P}\big[[\nabla\bvn]\,\bvn\big]=\nabla
\lambda_{\bvn}$, for some
$\lambda_{\bvn}\in L^{s}(0,T;L^{r}_{loc}(\overline{\rplus}))$ with
 $\nabla \lambda_{\bvn}\!\in\!
 L^{s}(0,T;(L^{r}(\rplus)^{3})$. Consequently, we have in 
~\eqref{eq:Navier-Stokes_strong} that
$\pi=\lambda+\lambda_{\bvn}$, explaining why the pressure (contrary to
its gradient) belongs to a local space, ending the proof.
\end{proof}

In order to apply Theorem~\ref{thm:giga}, we need to estimate the convective
term~in~(mixed) Lebesgue spaces.
\begin{proposition}
	\label{prop:prop1}
        Let  \hspace{-0.1mm}$\bvn_0\hspace{-0.175em}\in\hspace{-0.175em} L^2_{0,\sigma}(\rplus)$ \hspace{-0.1mm}and
        \hspace{-0.1mm}$\bvn\hspace{-0.175em}\in\hspace{-0.175em}
        L^2(0,T;W^{1,2}_{0,\sigma}(\rplus))\cap
        L^\infty(0,T;L_{0,\sigma}^2(\rplus))$ be  a \hspace{-0.15mm}corresponding \hspace{-0.15mm}Leray--Hopf \hspace{-0.15mm}(weak)
  solution \hspace{-0.1mm}(in \hspace{-0.15mm}the
  \hspace{-0.15mm}sense \hspace{-0.15mm}of
  \hspace{-0.15mm}Definition~\hspace{-0.15mm}\ref{def:leray_hopf}).~\hspace{-0.5mm}Then,
  \hspace{-0.15mm}from
	\begin{equation}
		\label{eq:reg-time-d}
		\bvn\in L^\beta(0,T;(L^{\infty}(\rplus ))^3)\quad \text{ for some
		}\beta\in( 1,2)\,,
	\end{equation}
	%
        for every $r\in (1,\frac{4}{4-\beta})$, it follows that
	\begin{equation}
		\label{eq:reg-convective}
		\begin{aligned}
                  & [\nabla\bvn]\bvn\in L^{\frac{2r\beta}{4r+r \beta-4}}(0,T;(L^{r}(\rplus))^3) \,.
                \end{aligned}
              \end{equation}
\end{proposition}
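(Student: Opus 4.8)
The plan is to bound the convective term pointwise by $\vert[\nabla\bvn]\bvn\vert\le\vert\nabla\bvn\vert\,\vert\bvn\vert$ and then chain one spatial Hölder inequality, one spatial interpolation inequality, and one Hölder inequality in time, keeping track of the exponents so that they combine to the stated value of $s$. First I would observe that $r<\tfrac{4}{4-\beta}<2$, since $\beta\in(1,2)$; hence for a.e.\ $\tau\in(0,T)$, Hölder's inequality in space with conjugate exponents $\tfrac{2}{r}$ and $\tfrac{2}{2-r}$ (applied to $\vert\nabla\bvn(\tau)\vert^{r}$ and $\vert\bvn(\tau)\vert^{r}$) yields
\begin{equation*}
	\|[\nabla\bvn(\tau)]\bvn(\tau)\|_{L^{r}(\rplus)}\le\|\nabla\bvn(\tau)\|_{L^{2}(\rplus)}\,\|\bvn(\tau)\|_{L^{b}(\rplus)}\,,\qquad b\coloneqq\tfrac{2r}{2-r}\in(2,\infty)\,.
\end{equation*}

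Next, by \eqref{eq:reg-time-d} we have $\bvn(\tau)\in L^{2}(\rplus)\cap L^{\infty}(\rplus)$ for a.e.\ $\tau\in(0,T)$, so the interpolation inequality $\|\,\cdot\,\|_{L^{b}}\le\|\,\cdot\,\|_{L^{2}}^{2/b}\|\,\cdot\,\|_{L^{\infty}}^{1-2/b}$, together with the identities $\tfrac{2}{b}=\tfrac{2-r}{r}$ and $1-\tfrac{2}{b}=\tfrac{2(r-1)}{r}$, gives for a.e.\ $\tau\in(0,T)$
\begin{equation*}
	\|[\nabla\bvn(\tau)]\bvn(\tau)\|_{L^{r}(\rplus)}\le\|\nabla\bvn(\tau)\|_{L^{2}(\rplus)}\,\|\bvn(\tau)\|_{L^{2}(\rplus)}^{\frac{2-r}{r}}\,\|\bvn(\tau)\|_{L^{\infty}(\rplus)}^{\frac{2(r-1)}{r}}\,.
\end{equation*}

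Finally I would apply the generalized Hölder inequality in $\tau$ to the three factors on the right, with exponents $2$, $\infty$, and $p\coloneqq\tfrac{r\beta}{2(r-1)}\in(0,\infty)$ (the last is admissible because $r>1$). Since $\tfrac{1}{s}=\tfrac{1}{2}+0+\tfrac{1}{p}=\tfrac{1}{2}+\tfrac{2(r-1)}{r\beta}$ rearranges to $s=\tfrac{2r\beta}{4r+r\beta-4}$, this produces
\begin{equation*}
	\|[\nabla\bvn]\bvn\|_{L^{s}(0,T;L^{r}(\rplus))}\le\|\nabla\bvn\|_{L^{2}(0,T;L^{2}(\rplus))}\,\|\bvn\|_{L^{\infty}(0,T;L^{2}(\rplus))}^{\frac{2-r}{r}}\,\|\bvn\|_{L^{\beta}(0,T;L^{\infty}(\rplus))}^{\frac{2(r-1)}{r}}\,,
\end{equation*}
and the right-hand side is finite because $\bvn\in L^{2}(0,T;W^{1,2}_{0,\sigma}(\rplus))\cap L^{\infty}(0,T;L^{2}_{0,\sigma}(\rplus))$ and, by hypothesis, $\bvn\in L^{\beta}(0,T;(L^{\infty}(\rplus))^{3})$; this is exactly \eqref{eq:reg-convective}.

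The argument is elementary and I do not foresee a genuine obstacle; the only point requiring attention is the bookkeeping of exponents, in particular verifying that $b=\tfrac{2r}{2-r}\in(2,\infty)$ and $p=\tfrac{r\beta}{2(r-1)}\in(0,\infty)$ — which is precisely where $1<r<2$ (a fortiori $r<\tfrac{4}{4-\beta}$) is used — and noting in passing that $s>1$ holds exactly when $r<\tfrac{4}{4-\beta}$, which is why that sharper upper bound on $r$ is imposed (so that the output is admissible as input to Theorem~\ref{thm:giga}).
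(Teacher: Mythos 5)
Your proof is correct and follows essentially the same route as the paper: spatial H\"older with exponents $(2,\tfrac{2r}{2-r})$, interpolation of $L^{2r/(2-r)}$ between $L^{2}$ and $L^{\infty}$, and then H\"older in time to land exactly at $s=\tfrac{2r\beta}{4r+r\beta-4}$. The only cosmetic difference is that you absorb the $\|\bvn(\tau)\|_{L^{2}}$ factor via $\bvn\in L^{\infty}(0,T;L^{2}_{0,\sigma}(\rplus))$ and a three-factor H\"older in time, whereas the paper first bounds it by $\|\bvn_{0}\|_{L^{2}}$ through the energy inequality and then uses a two-factor H\"older; the exponent bookkeeping is identical.
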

\begin{proof} 

  For $r\in (1,2)$ we find, by H\"older inequality, convex interpolation, and the energy
  inequality \eqref{def:leray_hopf.2}, for a.e.\ $t\in (0,T)$,   that 
  \begin{align*}
    \|([\nabla\bvn]\bvn)(t)\|_{L^r(\rplus )}&\leq
    \|\bvn(t)\|_{L^2(\rplus )}^{\frac{2}{r}-1}\|\bvn(t)\|_{L^\infty(\rplus )}^{2(1-\frac{1}{r})}\|\nabla\bvn(t)\|_{L^2(\rplus )}
\\&\leq\|\bvn_0\|_{L^2(\rplus )}^{\frac{2}{r}-1}
  \|\bvn(t)\|_{L^\infty(\rplus)}^{2(1-\frac{1}{r})}\,\|\nabla\bvn(t)\|_{L^2(\rplus )} \,, 
  \end{align*}
Thus, using again H\"older inequality we get 
   \begin{equation*}
    \int_0^T\hspace*{-1.5mm}\|([\nabla\bvn]\bvn)(t)\|_{L^{r}(\rplus)}^\alpha \, \de t \le
    c\, \bigg (\int_0^T\hspace*{-1.5mm}\|\nabla\bvn(t)\|_{L^{2}(\rplus)}^2 \, \de t
    \bigg)^{\frac r2} \!\bigg (\int_0^T\hspace*{-1.5mm}\|\bvn(t)\|_{L^{\infty}(\rplus)}^{\frac{4\alpha}{2-\alpha}\frac{r-1}{r}} \, \de t
    \bigg)^{\frac {2-r}2} ,
  \end{equation*} 
  which is finite, due to \eqref{eq:reg-time-d} and
  $\bvn \in L^2(0,T;W^{1,2}_{0,\sigma}(\rplus))$, provided
  $\alpha \le \frac{2r\beta}{4r+r \beta-4}$. Requiring $\alpha>1$ yields
  $r\in (1,\frac{4}{4-\beta})$. This proves that
  \eqref{eq:reg-convective} holds for~all~${r\in (1,\frac{4}{4-\beta})}$.
\end{proof}
Given the assumptions of Theorem~\ref{thm:main}, a combination of Theorem~\ref{thm:giga}
and Proposition~\ref{prop:prop1} yields the following alternative weak formulation
to~\eqref{def:leray_hopf.1}.\enlargethispage{7.5mm}
%
\begin{corollary}
  \label{cor:alternative_formulation}
  Let  $\bvn_0\in L^2_{0,\sigma}(\rplus)$ and 
  $\bvn\in L^2(0,T;W^{1,2}_{0,\sigma}(\rplus))\cap
  L^\infty(0,T;L_{0,\sigma}^2(\rplus))$ be  a \hspace{-0.15mm}corresponding \hspace{-0.15mm}Leray--Hopf \hspace{-0.15mm}(weak)
  solution \hspace{-0.1mm}(in \hspace{-0.15mm}the
  \hspace{-0.15mm}sense \hspace{-0.15mm}of
  \hspace{-0.15mm}Definition~\hspace{-0.15mm}\ref{def:leray_hopf}).~\hspace{-0.5mm}Then,
  \hspace{-0.15mm}from
  \begin{align*}
    \begin{aligned}
      \bvn&\in L^\beta(0,T;(L^{\infty}(\rplus ))^3)&&\quad \text{ for some
      }\beta\in( 1,2)\,,
      \\
      \bvn_{0}&\in X_{s,r}
      &&\quad\text{ for some }r\in
      (1,\tfrac{4}{4-\beta})\text{ and } s:=\frac{2r\beta}{4r+r \beta-4}\,,
    \end{aligned}
  \end{align*}
  it follows 
  that
  $\bvn\in W^{2,1}_{s,r}$ 
  and that there exists $\pi\in L^s(0,T;(W^{1,r}_{loc }(\overline{\rplus}))^3)$, with $\nabla\pi\in L^s(0,T;(L^r(\rplus))^3)$, such that \eqref{eq:Navier-Stokes_strong} and \eqref{eq:Navier-Stokes_strong_estimate} apply.
\end{corollary}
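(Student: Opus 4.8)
The proof I have in mind is a direct concatenation of Proposition~\ref{prop:prop1} and Theorem~\ref{thm:giga}, so the plan is short. First I would note that the hypothesis $\bvn\in L^\beta(0,T;(L^\infty(\rplus))^3)$ with $\beta\in(1,2)$ is exactly assumption~\eqref{eq:reg-time-d} of Proposition~\ref{prop:prop1}. Applying that proposition with the $r\in(1,\tfrac{4}{4-\beta})$ fixed in the statement of the corollary yields $[\nabla\bvn]\bvn\in L^{s}(0,T;(L^{r}(\rplus))^3)$ with $s=\frac{2r\beta}{4r+r\beta-4}$, i.e.\ precisely the time exponent appearing in \eqref{eq:reg-convective} and in the hypothesis on $\bvn_0$.

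Second, I would check that this pair $(s,r)$ is admissible for Theorem~\ref{thm:giga}, i.e.\ that $s,r\in(1,\infty)$. Since $\beta<2$ one has $\tfrac{4}{4-\beta}<2$, hence $r\in(1,2)\subset(1,\infty)$. For $s$: the denominator $4r+r\beta-4=r(4+\beta)-4$ is strictly positive because $r>1>\tfrac{4}{4+\beta}$, so $s<\infty$; and $s>1$ is equivalent to $2r\beta>4r+r\beta-4$, i.e.\ $r(\beta-4)>-4$, i.e.\ $r<\tfrac{4}{4-\beta}$, which holds by assumption. This is the same elementary bookkeeping already carried out at the end of the proof of Proposition~\ref{prop:prop1}.

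Third, with $[\nabla\bvn]\bvn\in L^{s}(0,T;(L^{r}(\rplus))^3)$ for this admissible $(s,r)$ and with $\bvn_0\in X_{s,r}$ for the very same $(s,r)$ (the second hypothesis of the corollary), all assumptions of Theorem~\ref{thm:giga} are satisfied. That theorem then gives $\bvn\in W^{2,1}_{s,r}$, the existence of $\pi\in L^s(0,T;L^r_{loc}(\overline{\rplus}))$ with $\nabla\pi\in L^s(0,T;(L^r(\rplus))^3)$ solving \eqref{eq:Navier-Stokes_strong} a.e.\ in $(0,T)\times\rplus$, together with the estimate \eqref{eq:Navier-Stokes_strong_estimate}; combining $\pi\in L^s(0,T;L^r_{loc})$ with $\nabla\pi\in L^s(0,T;L^r)$ upgrades $\pi$ to $L^s(0,T;W^{1,r}_{loc}(\overline{\rplus}))$ as stated. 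There is essentially no obstacle here beyond the exponent bookkeeping in the second step: one only has to make sure that the pair $(s,r)$ produced by Proposition~\ref{prop:prop1} coincides with the one for which $\bvn_0\in X_{s,r}$ is assumed and that it lies in the range $(1,\infty)^2$ demanded by Theorem~\ref{thm:giga}.
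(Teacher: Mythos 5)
Your proposal is correct and follows exactly the route the paper intends: the corollary is stated there as a direct combination of Proposition~\ref{prop:prop1} (giving $[\nabla\bvn]\bvn\in L^{s}(0,T;(L^{r}(\rplus))^3)$ with $s=\frac{2r\beta}{4r+r\beta-4}$) and Theorem~\ref{thm:giga} applied with that same pair $(s,r)$ and $\bvn_0\in X_{s,r}$. Your exponent bookkeeping showing $s,r\in(1,\infty)$ (indeed $r\in(1,2)$ and $s>1$ precisely because $r<\tfrac{4}{4-\beta}$) is the only verification needed and is carried out correctly.
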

\begin{remark}
  We observe that the regularity obtained in 
  Corollary~\ref{cor:alternative_formulation} is not enough to infer directly that
  $\partial_t\bvn\cdot\bvn\in L^1((0,T)\times\rplus)$, which will allow us to justify (after smoothing) that the integral $\int_{0}^{T}\int_{\rplus}\partial_t\bvn \cdot\bvn\dx\,\de\tau$ converges
  to one-half of the difference of squared $L^2$-norms. For instance, observe that
  Corollary~\ref{cor:alternative_formulation} and interpolation between
  $L^{\infty}(0,T;(L^{2}(\rplus))^3)$ and $L^{\beta}(0,T;(L^{\infty}(\rplus))^3)$ imply that
\begin{align*}
  \partial_t\bvn&\in
  \big(L^{\smash{\frac{2(2+\beta)}{4+\beta}}}((0,T)\times\rplus)\big)^{3}\,,
  \\ 
  \bvn&\in
  \big(L^{2+\beta}((0,T)\times\rplus)\big)^{3}\,, 
\end{align*}
and that only if $\beta\geq2$, we have that
$$\frac{4+\beta}{2(2+\beta)}+\frac{1}{2+\beta}\leq1\,,$$ 
which is needed to apply H\"older's
inequality. The case $\beta\geq2$, however, 
 is excluded since ``large'' $\beta$ will mean $\bvn$ being already 
in a scaling-invariant class of regularity. This explains also why the treatment of the
``linear'' term involving the time derivative is again possible only employing special
properties of the approximation, deduced from the H\"older continuity assumption, see 
Section~\ref{sec:NSE}.
\end{remark}
\section{Mollification of divergence-free functions in H\"older spaces}
\label{sec:mollification}
To handle the difficulties coming from the (homogeneous) Dirichlet boundary condition, we adapt and modify a
 strategy as in~\cite{CET1994,DR2000,Eyi1994} to justify the calculations. To
understand the critical technical point, 
observe that a traditional --probably the unique-- way to take decisive advantage from the
additional H\"older continuity of the solution is to use properties of mollification
operators.  For divergence-free functions, this is particularly delicate in the presence
of a solid boundary, see Remark~\ref{rem:smoothing}. %
%
%
%
%

For this purpose, we recall some basic results, which are independent
of the incompressible case. We choose a radially symmetric
mollification kernel $\rho\in C^\infty_0(\R^{d})$ such that
\begin{align*}
  \rho\ge 0\quad\text{ in }\R^{d}\,,\qquad \textup{supp}\,\rho\subseteq
  B_1^{d}(0)\,,\qquad \int_{\R^{d}}{\rho(x)\,\mathrm{d}x}=1\,, 
\end{align*}
and define the family of Friedrichs mollifiers $(\rho_\varepsilon)_{\varepsilon\in (0,1]}\subseteq C^\infty_0(\mathbbm{R}^d)$ for every $\varepsilon\in (0,1]$ and $x\in  \mathbbm{R}^d$ by $\rho_\varepsilon(x)\coloneqq\frac{1}{\varepsilon^d}\rho(\frac{x}{\varepsilon})$. Then, for every function $u\in L^1_{\textup{loc}}(\mathbbm{R}^d)$,~we~\mbox{define}, by using standard convolution, for every $\varepsilon\in (0,1]$ and $x\in \mathbbm{R}^d$
	\begin{align}\label{def:molli1}
        \begin{aligned}
		   (\rho_\varepsilon\ast u)(x)\coloneqq \int_{\mathbbm{R}^d}{\rho_\varepsilon(x-y)u(y)\,\mathrm{d}y}=\int_{\mathbbm{R}^d}{\rho_\varepsilon(y)u(x-y)\,\mathrm{d}y}\,.
        \end{aligned}
	\end{align}
	In fact, the second last integral is evaluated only in $B_\varepsilon^d(x)$, while the  last integral is evaluated only in $B_\varepsilon^d(0)$. So, if needed, we can restate  \eqref{def:molli1}, for every $\varepsilon\in (0,1]$ and $x\in \mathbbm{R}^d$,~as
	\begin{align}\label{def:molli2}
        \begin{aligned} 
		  (\rho_\varepsilon\ast u)(x)=\int_{B_\varepsilon^d(x)}{\rho_\varepsilon(x-y)u(y)\,\mathrm{d}y}=\int_{B_\varepsilon^d(0)}{\rho_\varepsilon(y)u(x-y)\,\mathrm{d}y}\,.
        \end{aligned}
	\end{align}
 Denote by
$\overline{(\cdot)}\colon L^1_{\textup{loc}}(\rplusd)\to
L^1_{\textup{loc}}(\mathbbm{R}^d)$ the zero extension operator outside of
$\rplusd$, for~every $u\in L^1_{\textup{loc}}(\rplusd)$ defined by
\begin{align}
  \label{eq:zero-extension}
  \overline{u}\coloneqq
  \begin{cases}
    u&\text{ in }\rplusd\,,
    \\
    0&\text{ in }\mathbbm{R}^d\setminus \rplusd\,.
  \end{cases}
\end{align}
Then, for every $u\in L^1_{\textup{loc}}(\rplusd)$, for
every $\varepsilon \in (0,1]$ and $x\in\mathbbm{R}^d$, we abbreviate
\begin{align*}
	u_\varepsilon(x)\coloneqq (\rho_{\varepsilon} \ast \overline{u})(x)\,.
\end{align*}
Note that if
$u\in  {\dot C}^{0,\alpha}(\overline{\rplusd})\cap L^{1}_{\textup{loc}}(\rplusd)$,
where $\alpha\in  (0,1]$, with $u=0$~on~$\partial\rplusd$, then
$\overline{u}\in \dot{C}^{0,\alpha}(\mathbbm{R}^d)\cap
L^1_{\textup{loc}}(\mathbbm{R}^d)$ with
$[\overline{u}]_{\dot{C}^{0,\alpha}(\mathbbm{R}^d)}=[u]_{{\dot C}^{0,\alpha}(\overline{\rplusd})}$.~Similarly, if ${u\in C^{0,\alpha}(\overline{\rplusd})}$, where $\alpha\hspace{-0.15em}\in\hspace{-0.15em} (0,1]$, with $u\hspace{-0.15em}=\hspace{-0.15em}0$ on
$\partial\rplusd$, then $\overline{u}\hspace{-0.15em}\in\hspace{-0.15em} C^{0,\alpha}(\mathbbm{R}^d)$ with ${\|\overline{u}\|_{C^{0,\alpha}(\mathbbm{R}^d)}\hspace{-0.15em}=\hspace{-0.15em}\|u\|_{C^{0,\alpha}(\overline{\rplusd})}}$.

The basic calculus estimates we can employ are the following.
\begin{lemma}
  \label{lem:convolution-Holder0}
  Let $d\in \mathbb{N}$ and $\bu\in {\dot C}^{0,\alpha}(\overline{\rplusd})$, where $\alpha\in (0,1)$. Then, for every $\vep>0$, it holds that
  \begin{align}
    \label{eq:conv20}
    &   \|\bu_{\vep}-\bu\|_{L^\infty(\rplusd)}\leq
                            [u]_{{\dot C}^{0,\alpha}(\overline{\rplusd})}\, \vep^{\alpha}\,, 
    \\
     \label{eq:conv30}
    &\|\nabla\bu_{\vep}\|_{L^\infty(\rplusd)}\leq c_\rho\,
                            [u]_{{\dot C}^{0,\alpha}(\overline{\rplusd})}\,\vep^{\alpha-1}\,, 
  \end{align}
  where $c_\rho\coloneqq \int_{\R^{d}}|\nabla\rho|\dx>0$.
\end{lemma}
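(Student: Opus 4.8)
The plan is to reduce both estimates to the classical mollification calculus for the zero extension $\overline{\bu}$ on the whole space $\R^{d}$. By the extension property recorded immediately before the statement, $\overline{\bu}\in\dot C^{0,\alpha}(\R^{d})$ with $[\overline{\bu}]_{\dot C^{0,\alpha}(\R^{d})}=[\bu]_{\dot C^{0,\alpha}(\overline{\rplusd})}$, and since $\overline{\bu}=\bu$ in $\rplusd$, it suffices to prove the inequalities with $\rplusd$ replaced by $\R^{d}$ and $\bu$ replaced by $\overline{\bu}$, and then restrict to $\rplusd$. In what follows I therefore work with $\overline{\bu}$, noting that $\bu_{\vep}=\rho_{\vep}\ast\overline{\bu}\in C^{\infty}(\R^{d})$ and that one may differentiate under the integral sign in \eqref{def:molli1}.

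For \eqref{eq:conv20} I would use $\int_{\R^{d}}\rho_{\vep}=1$ to write, for every $x\in\R^{d}$,
\begin{equation*}
  \bu_{\vep}(x)-\overline{\bu}(x)=\int_{B_{\vep}^{d}(0)}\rho_{\vep}(y)\,\bigl(\overline{\bu}(x-y)-\overline{\bu}(x)\bigr)\,\de y\,,
\end{equation*}
and bound the integrand, on the support $B_{\vep}^{d}(0)$ of $\rho_{\vep}$, by $[\overline{\bu}]_{\dot C^{0,\alpha}(\R^{d})}\,|y|^{\alpha}\le [\overline{\bu}]_{\dot C^{0,\alpha}(\R^{d})}\,\vep^{\alpha}$; integrating and using $\int_{\R^{d}}\rho_{\vep}=1$ once more gives \eqref{eq:conv20}.

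For \eqref{eq:conv30} I would differentiate, $\nabla\bu_{\vep}(x)=\int_{\R^{d}}\nabla\rho_{\vep}(x-y)\,\overline{\bu}(y)\,\de y$, subtract the term $\overline{\bu}(x)\int_{\R^{d}}\nabla\rho_{\vep}(x-y)\,\de y=\mathbf{0}$, which vanishes because $\rho_{\vep}\in C^{\infty}_{0}(\R^{d})$, to obtain
\begin{equation*}
  \nabla\bu_{\vep}(x)=\int_{\R^{d}}\nabla\rho_{\vep}(x-y)\,\bigl(\overline{\bu}(y)-\overline{\bu}(x)\bigr)\,\de y\,,
\end{equation*}
and then estimate $|\nabla\bu_{\vep}(x)|\le [\overline{\bu}]_{\dot C^{0,\alpha}(\R^{d})}\int_{\R^{d}}|\nabla\rho_{\vep}(z)|\,|z|^{\alpha}\,\de z$. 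The substitution $z=\vep w$, together with $\nabla\rho_{\vep}(z)=\vep^{-d-1}(\nabla\rho)(z/\vep)$, turns the last integral into $\vep^{\alpha-1}\int_{\R^{d}}|\nabla\rho(w)|\,|w|^{\alpha}\,\de w$, which is bounded by $c_{\rho}\,\vep^{\alpha-1}$ since $|w|\le 1$ on $\mathrm{supp}\,\rho$ and $\alpha>0$. Restricting to $\rplusd$ yields \eqref{eq:conv30}.

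I expect no essential obstacle here: this is the standard mollification argument, and all the displayed bounds are elementary. The only points that genuinely need care are the reduction to $\R^{d}$ — which rests on the seminorm-preserving zero extension recorded above, and hence on the homogeneous boundary value of $\bu$ on $\partial\rplusd$ — and, for \eqref{eq:conv30}, the cancellation $\int_{\R^{d}}\nabla\rho_{\vep}\,\de y=\mathbf{0}$ combined with the scaling identity for $\nabla\rho_{\vep}$. The very same computations, started from $\|\overline{\bu}\|_{L^{\infty}(\R^{d})}=\|\bu\|_{L^{\infty}(\overline{\rplusd})}$ as well, give the corresponding statements for the full norm $\|\,\cdot\,\|_{C^{0,\alpha}}$ should they be needed later.
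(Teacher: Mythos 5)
Your proof is correct and is essentially the paper's argument: the paper itself only cites \cite{CET1994,Ber2023b} at this point, and your computation coincides with the one the paper carries out explicitly for the companion Lemma~\ref{lem:convolution-Holder} (estimates \eqref{eq:conv2p}--\eqref{eq:conv3p}), namely seminorm-preserving zero extension followed by the standard bounds using $\int_{\R^d}\rho_{\vep}\,\de y=1$ for \eqref{eq:conv20} and $\int_{\R^d}\nabla\rho_{\vep}\,\de y=\mathbf{0}$ together with the scaling of $\nabla\rho_{\vep}$ for \eqref{eq:conv30}. You are also right to flag that the reduction to $\R^d$ requires $\bu=0$ on $\partial\rplusd$: without this the zero extension is not H\"older and both estimates genuinely fail (take $\bu\equiv 1$, whose seminorm vanishes while $\bu_{\vep}\neq\bu$ and $\nabla\bu_{\vep}\neq\mathbf{0}$ near the boundary), so this hypothesis --- stated explicitly in Lemma~\ref{lem:convolution-Holder} and satisfied in every application in the paper --- must be read into the statement.
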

\begin{proof}
  	See~\cite{CET1994,Ber2023b}.
      \end{proof}
        To pass to our vector-valued problem, we first observe that the zero
  extension \eqref{eq:zero-extension} is still applicable component-wise. What is relevant
  is that if ${\mathbf{u}\in W^{1,1}_{0,\sigma}(\rplusd)}$, then 
  $\overline{\mathbf{u}}\in W^{1,1}_{0,\sigma}(\mathbbm{R}^d)$.
      \begin{remark}
        \label{rem:smoothing}
        The direct approach for smoothing by convolution cannot be applied
        to our problem:
          
        On the one hand, if $\bun \in W^{1,1}_{0,\sigma}(\rplusd)$, then, by the commutation properties
        of the convolution operator, it holds that $\bun_\vep\coloneqq\rho_{\vep} \ast \overline{\bun} \in (C^\infty(\mathbbm{R}^d))^d\cap (W^{1,1}(\mathbbm{R}^d))^d$ with
     \begin{align*}
           \left. \begin{aligned} 
          \textup{div}\,\bun_\vep&=\rho_{\vep}
          \ast (\textup{div}\,\overline{\bun})\\&=\rho_{\vep} \ast(
          \overline{\textup{div}\,\bun})=0
          \end{aligned}\quad \right\}\quad\text{ a.e.\ in }\R^{d}\,,
        \end{align*}
        which aligns with divergence-free requirement for test functions.

    On the other hand, upon studying the support, in general, it holds that
    \begin{align*}
    	\textup{supp}\,\bun_\vep\not\subseteq \rplusd\,,
    \end{align*}
    violating the support requirement for test functions.

    \medskip
    
    Alternatively, if we localize the vector field $\mathbf{u}\in W^{1,1}_{0,\sigma}(\rplusd)$ in the interior~of~$\rplus$ using a bump
    function $\phi\in C^\infty_{c}(\rplusd)$ and choose $\vep>0$  smaller than the distance between
    $\textup{supp}\,\phi$~and~$\partial\rplus$, we can produce a vector field that has a
    compact support~in~$\rplusd$, but that is not divergence-free. Applying the Leray projection, 
    i.e., $\mathcal{P}(\rho_{\vep}*(\overline{\bun}\,\phi))$, would yield a valid
    test function. However, since $\mathcal{P}$ is a non-local operator, we would lose the crucial
    cancellation properties associated with the convective term.
    
  \end{remark}
  We then adapt the standard mollification to our problem with Dirichlet boundary
  conditions on a hyperplane and divergence-free constraint. To this end, we use a
  smoothing obtained by a combination of \textit{convolution} in all variables, and
  \textit{translation} in the $x_d$-direction.
  \begin{definition}[Convolution-translation operator]
    \label{def:uepsilonprime}
    Let $d,\ell\hspace{-0.175em}\in \hspace{-0.175em}\mathbb{N}$ and $\bun\hspace{-0.175em}\in\hspace{-0.175em} (L^{1}_{\textup{loc}}(\rplusd))^{\ell}$.  Then, denoting
    by $\overline{\mathbf{u}}\in (L^1_{\textup{loc}}(\mathbbm{R}^{d}))^{\ell}$ the zero extension defined
    component-wise~by~\eqref{eq:zero-extension} 
    for every $\vep>0$, we define 
    $S_\varepsilon(\mathbf{u})\colon\R^{d}\to \mathbbm{R}^{\ell}$, for every
    $x\in \mathbbm{R}^d$, by
    \begin{equation*}
    \begin{aligned}
      S_\varepsilon(\mathbf{u})(x)&\coloneqq \int_{B_{\vep}^{d}(0)}\rho_{\vep}(y)\,\overline{\bun}(x-2\vep\,
      \mathrm{e}_d-y)\dy
      \\
      &=\big[\rho_{\vep}*(\tau_{2\vep}\overline{\bun})\big](x)\,,
    \end{aligned}
  \end{equation*}
  where $\tau_{2\vep}\colon (L^1_{\textup{loc}}(\mathbbm{R}^{d}))^{\ell}\to (L^1_{\textup{loc}}(\mathbbm{R}^{d}))^{\ell}$ denotes the translation in the direction
  $-\mathrm{e}_d$ with increment $2\vep$,  for every $\mathbf{w}\in (L^1_{\textup{loc}}(\mathbbm{R}^{d}))^{\ell}$ defined by 
  $(\tau_{2\vep}\mathbf{w}) (x)\coloneqq \mathbf{w}(x-2 \vep \,\mathrm{e}_d)$ for a.e.\ $x\in \mathbbm{R}^{d}$.
\end{definition}
A similar approach has previously been used in the mathematical analysis~of~shells (\textit{i.e.}, without constraints on the divergence) and for general domains satisfying the cone
property; see for instance, Blouza and Le Dret~\cite{BLD2001} and the references~therein.
Here, it is crucial that the domain is flat and the translation is the same for~all~points. This allows us to avoid localizations (\textit{cf}.~Remark~\ref{rem:smoothing}) and permits to
preserve the divergence-free constraint.  Another ``easy case'' to which a similar
approach could be adapted is the case of a domain that is star-shaped with respect to a ball. A completely different approach needs to be used in the case of a general (bounded) smooth domain,
since neither translation in a single direction is possible nor dilation towards a
single center, see~\cite{BKR2024b}. Note that the result in this paper are simpler than those in~\cite{BKR2024b} for
what concerns the smoothing and the computations are particularly neat, while other
technical problems arise due to the fact that the domain is unbounded.\enlargethispage{2mm}

\medskip

The mollification operator from Definition~\ref{def:uepsilonprime} shares many of the
usual properties of the standard mollification in the case without boundaries. The main
properties we will use are summarized in the next two propositions: the first one concerns
simultaneous preservation of zero boundary conditions and incompressibility and the
regularity and order of convergence as $\varepsilon\to0^{+}$ in Lebesgue spaces. The
second one concerns the properties in case of a further application of a standard
convolution by a Friederichs mollifier.

\begin{proposition}
  \label{prop:basic_properties}
  Let $d,\ell\in \mathbb{N}$. Then, for every $\bun \in (L^1_{\textup{loc}}(\rplusd))^{\ell}$, the
  following statements apply:
  \begin{itemize}[noitemsep,topsep=2pt,leftmargin=22pt,labelwidth=\widthof{(iii)}]
  \item[(i)] For every $\vep>0$, it holds that $S_{\vep}(\bun) \in (C^\infty(\rplusd))^{\ell}$ with
   $$\mathrm{supp}(S_{\vep}(\bun) )\subseteq \vep\, \mathrm{e}_d + \rplusd\,.$$
  \item[(ii)]  If $\ell=d$ and  $\bun\in (W^{1,1}_{0,\sigma}(\rplusd)
    )^\ell$, then, for every $\vep>0$, it holds that $S_{\vep}(\bun)
    \in (C^{\infty}_{\sigma}(\rplusd) )^\ell $ and
    $S_{\vep}(\bun)(x_{1},\dots,x_{d-1},0)=\mathbf{0}$.
  \item[(iii)] There exists a constant $c_d>0$, depending only on $d$, such that for every
    $ \vep >0$, it holds that
    \begin{alignat}{2}
      \vert S_{\vep}(\bun) \vert &\leq c_d
      \,M(\vert\overline{\bun}\vert)&&\quad\text{ a.e.\ in }\rplusd
      \,,\label{prop:basic_properties.1}
      \\
      \intertext{and if
        $\bun\in (W^{1,1}_0(\rplusd))^{\ell}$,
        then, for every
    $ \vep >0$, it holds that}\vert\nabla S_{\vep}(\bun) \vert &\leq c_d
      \,M(\vert\nabla\overline{\bun}\vert) &&\quad\text{ a.e.\ in }\rplusd
      \,,\label{prop:basic_properties.2}
      \\
      \vert S_{\vep}(\bun) -\bun\vert &\leq c_d \,\vep
      \,\big(M(\vert\overline{\bun}\vert)+M(\vert\nabla\overline{\bun}\vert)\big)
      &&\quad\text{
        a.e.\ in }\rplusd \,,\label{prop:basic_properties.2.1}
    \end{alignat}
    where $M\colon L^1_{\textup{loc}}(\mathbbm{R}^d)\to L^0(\mathbbm{R}^d)$ denotes the Hardy--Littlewood~maximal~\mbox{operator}.
    
  \item[(iv)] If $\bun\in (L^r(\rplusd))^{\ell}$ for $r\in (1,\infty)$, then there
    exists a constant $c_r>0$, depending only on $d$ and $r$, such that for
    every $ \vep >0$, it holds that 
    \begin{align}
      \|S_{\vep}(\bun) \|_{L^r(\rplusd)} &\leq c_r\,
                                         \|\bun\|_{L^r(\rplusd)}\,,
                                         \label{prop:basic_properties.3}
      \\
      \intertext{and if $\bun\in (W^{1,r}_0(\rplusd))^{\ell}$ for $r\in (1,\infty)$,
      then, for every $\vep>0$, it holds that}\|\nabla S_{\vep}(\bun) \|_{L^r(\rplusd)} &\leq
                                                    c_r\,\|\nabla\bun\|_{L^r(\rplusd)}\,,
                                                    \label{prop:basic_properties.4}
      \\
      \|S_{\vep}(\bun) -\bun\|_{L^r(\rplusd)} &\leq c_r\,\vep
                                              \,\|\bun\|_{W^{1,r}(\rplusd)}\,.
                                              \label{prop:basic_properties.4.1} 
    \end{align}
    
  \item[(v)] If $\bun\in (W^{1,r}_0(\rplusd))^{\ell}$ for $r\in (1,\infty)$, then
    $S_{\vep}(\bun) \to \bun$ in $(W^{1,r}_0(\rplusd))^{\ell}$ ($\vep \to 0^{+}$).
  \end{itemize}
\end{proposition}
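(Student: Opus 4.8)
The plan is to reduce all five items to the single observation that the convolution--translation operator is an ordinary convolution against a shifted kernel,
$$S_\vep(\bun)=(\tau_{2\vep}\rho_\vep)\ast\overline{\bun}\,,$$
where $(\tau_{2\vep}\rho_\vep)(z)\coloneqq\rho_\vep(z-2\vep\,\mathrm e_d)$. This kernel is nonnegative, has unit mass, satisfies $\|\tau_{2\vep}\rho_\vep\|_{L^\infty(\R^d)}=\vep^{-d}\|\rho\|_{L^\infty(\R^d)}$, and is supported in $\{z\in\R^d:\ z_d\in[\vep,3\vep]\}\subseteq B^d_{3\vep}(0)$. The last property is the crux of (i) and (ii): if $z\in\operatorname{supp}(\tau_{2\vep}\rho_\vep)$ and $x-z\in\operatorname{supp}\overline{\bun}\subseteq\{w_d\ge 0\}$, then $x_d\ge z_d\ge\vep$, whence $\operatorname{supp}(S_\vep(\bun))\subseteq\vep\,\mathrm e_d+\rplusd$; the $C^\infty$-regularity on $\rplusd$ (in fact on $\R^d$) is the standard smoothing property of convolution with a $C^\infty_0$ kernel. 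For (ii) with $\ell=d$, I recall from the discussion preceding the proposition that $\bun\in W^{1,1}_{0,\sigma}(\rplusd)$ implies $\overline{\bun}\in W^{1,1}_{0,\sigma}(\R^d)$, hence $\dive\overline{\bun}=0$ in $\R^d$; since convolution and translation commute with $\dive$, we get $\dive S_\vep(\bun)=(\tau_{2\vep}\rho_\vep)\ast(\dive\overline{\bun})=0$ in $\R^d$, and combining with (i) yields $S_\vep(\bun)\in(C^\infty_\sigma(\rplusd))^d$ with vanishing boundary values.

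For (iii) one repeats the classical maximal-function bounds, tracking only the enlarged support radius. From $|S_\vep(\bun)(x)|\le\|\tau_{2\vep}\rho_\vep\|_{L^\infty}\int_{B^d_{3\vep}(x)}|\overline{\bun}|\le\vep^{-d}\|\rho\|_{L^\infty}\,|B^d_{3\vep}|\,M(|\overline{\bun}|)(x)$ one obtains \eqref{prop:basic_properties.1} with $c_d=3^d|B^d_1|\,\|\rho\|_{L^\infty}$. If moreover $\bun\in(W^{1,1}_0(\rplusd))^\ell$, then $\overline{\bun}\in(W^{1,1}(\R^d))^\ell$ with the zero extension of the gradient equal to the gradient of the zero extension, so $\nabla S_\vep(\bun)=(\tau_{2\vep}\rho_\vep)\ast\nabla\overline{\bun}$ and the same estimate gives \eqref{prop:basic_properties.2}. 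For \eqref{prop:basic_properties.2.1} I would use, valid for a.e.\ $x\in\R^d$ and a.e.\ $y$, the representation $\overline{\bun}(x-2\vep\,\mathrm e_d-y)-\overline{\bun}(x)=-\int_0^1(2\vep\,\mathrm e_d+y)\cdot\nabla\overline{\bun}(x-t(2\vep\,\mathrm e_d+y))\,\de t$ (a Fubini consequence of the smooth case), integrate it against $\rho_\vep(y)\dy$, use $\overline{\bun}=\bun$ on $\rplusd$ and $|2\vep\,\mathrm e_d+y|\le 3\vep$, and bound the resulting double integral by the maximal function via the inclusion $B^d_{t\vep}(x-2t\vep\,\mathrm e_d)\subseteq B^d_{3t\vep}(x)$; this gives $|S_\vep(\bun)(x)-\bun(x)|\le c_d\,\vep\,M(|\nabla\overline{\bun}|)(x)$, in particular \eqref{prop:basic_properties.2.1}. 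Alternatively, and more crudely, one splits into $\{x_d\ge\vep\}$, where the previous bound applies, and $\{x_d<\vep\}$, where $S_\vep(\bun)$ vanishes and $|\bun(x)|$ is controlled by $c_d\,\vep\,(M(|\overline{\bun}|)+M(|\nabla\overline{\bun}|))(x)$ through a pointwise Hardy-type inequality, which is why both terms are recorded.

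Items (iv) and (v) are then immediate. For (iv), apply the $L^r$-boundedness of the Hardy--Littlewood maximal operator ($1<r<\infty$) to \eqref{prop:basic_properties.1}--\eqref{prop:basic_properties.2.1}, using that the zero extension is an $L^r$-isometry and, for $\bun\in W^{1,r}_0$, that $\|\nabla\overline{\bun}\|_{L^r(\R^d)}=\|\nabla\bun\|_{L^r(\rplusd)}$; this yields \eqref{prop:basic_properties.3}--\eqref{prop:basic_properties.4.1}. For (v), for $\bun\in(W^{1,r}_0(\rplusd))^\ell$ both $S_\vep(\bun)-\bun$ and $\nabla S_\vep(\bun)-\nabla\bun$ are restrictions to $\rplusd$ of $(\tau_{2\vep}\rho_\vep)\ast g-g$ with $g=\overline{\bun}$, respectively $g=\nabla\overline{\bun}$, in $L^r(\R^d)$; writing $(\tau_{2\vep}\rho_\vep)\ast g=\rho_\vep\ast(\tau_{2\vep}g)$ and combining the $L^r$-continuity of translations with the approximate-identity property of $(\rho_\vep)_{\vep}$ shows these tend to $0$ in $L^r(\R^d)$, and since $S_\vep(\bun)$ has zero trace and finite $W^{1,r}$-norm it lies in $W^{1,r}_0(\rplusd)$; hence $S_\vep(\bun)\to\bun$ in $(W^{1,r}_0(\rplusd))^\ell$ as $\vep\to 0^+$. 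The only genuinely delicate point is the bookkeeping in (iii): making the maximal-function estimates honest in the presence of the translation (the enlarged radius $3\vep$) and, for the difference estimate, exploiting that the zero extension lies in $W^{1,1}(\R^d)$ — the role of the vanishing trace — together with its absolute-continuity-on-lines representation near the boundary; everything else is routine manipulation of the identity $S_\vep(\bun)=(\tau_{2\vep}\rho_\vep)\ast\overline{\bun}$.
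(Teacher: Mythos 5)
Your proposal is correct and follows essentially the same route as the paper: the support arithmetic for (i)--(ii), the pointwise Hardy--Littlewood maximal bounds (including the fundamental-theorem-of-calculus representation for the difference estimate, justified by $\overline{\bun}\in W^{1,1}(\mathbbm{R}^d)$ when $\bun\in W^{1,1}_0$) for (iii), and the $L^r$-stability of $M$ for (iv). The only deviation is in (v), where you use continuity of translations plus the approximate-identity property in place of the paper's explicit density argument with smooth compactly supported approximants; the two are interchangeable and equally rigorous.
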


Since we will use as test function
$\bphi=\rho_{\varepsilon}\ast S_{\vep}(\bvn)$, which is obtained
by a further standard mollification of the velocity vector field smoothed by convolution-translation,
we also need the following result.
\begin{proposition}
  \label{prop:admissibility}
  Let $d,\ell\in \mathbb{N}$. Then, for every $\mathbf{u} \in (L^{1}_{\textup{loc}}(\rplusd))^{\ell}$,
  the following statements apply:
	\begin{itemize}[noitemsep,topsep=2pt,leftmargin=20pt,labelwidth=\widthof{(iii)}]
		\item[(i)] For every $\vep\hspace{-0.1em}>\hspace{-0.1em}0$, it holds that $\rho_{\vep}*S_{\vep}(\bun)\hspace{-0.1em}\in\hspace{-0.1em} (C^\infty(\rplusd))^{\ell}$ with
                  ${\mathrm{supp}(\rho_{\vep}\ast S_{\vep}(\bun))\hspace{-0.1em}\subseteq\hspace{-0.1em} \rplusd}$. 
		\item[(ii)] If $\ell=d$ and $\mathbf{u}\in (W^{1,1}_{0,\sigma}(\rplusd) )^\ell$, then, for every $\vep>0$, it holds that
                  $\rho_{\vep}\ast S_{\vep}(\bun)\in
                  (C^{\infty}_{\sigma}(\rplusd) )^\ell$ and $\rho_{\vep}\ast S_{\vep}(\bun)(x_{1},\dots,x_{d-1},0)=\mathbf{0}$. 
	\end{itemize}
\end{proposition}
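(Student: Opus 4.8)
The plan is to deduce both items from Proposition~\ref{prop:basic_properties} together with the standard calculus of Friedrichs mollifiers. Using Definition~\ref{def:uepsilonprime} and associativity of convolution, rewrite
\[
\rho_\vep\ast S_\vep(\bun)=(\rho_\vep\ast\rho_\vep)\ast(\tau_{2\vep}\overline{\bun})\,,
\]
so that the object under consideration is the convolution of the merely locally integrable field $\tau_{2\vep}\overline{\bun}$ with the kernel $\rho_\vep\ast\rho_\vep\in C^\infty_0(\mathbbm{R}^d)$, whose support is contained in $\mathrm{supp}\,\rho_\vep+\mathrm{supp}\,\rho_\vep$.

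For part~(i), smoothness of $\rho_\vep\ast S_\vep(\bun)$ is immediate, being the convolution of a locally integrable function with a kernel in $C^\infty_0(\mathbbm{R}^d)$ (alternatively, $S_\vep(\bun)\in(C^\infty(\rplusd))^\ell$ by Proposition~\ref{prop:basic_properties}(i), and a further convolution with $\rho_\vep$ preserves this). For the support inclusion I would use additivity of supports under convolution together with $\mathrm{supp}(\overline{\bun})\subseteq\{x_d\ge 0\}$, whence $\mathrm{supp}(\tau_{2\vep}\overline{\bun})\subseteq\{x_d\ge 2\vep\}$, to obtain
\[
\mathrm{supp}\big(\rho_\vep\ast S_\vep(\bun)\big)\subseteq\{x_d\ge 2\vep\}+\big(\mathrm{supp}\,\rho_\vep+\mathrm{supp}\,\rho_\vep\big)\,.
\]
Since $\mathrm{supp}\,\rho$ is a compact subset of the \emph{open} ball $B_1^d(0)$, there is $\delta\in(0,1)$, depending only on $\rho$, with $\mathrm{supp}\,\rho_\vep\subseteq\overline{B_{(1-\delta)\vep}^d(0)}$ for every $\vep>0$; hence the right-hand side above is contained in $\{x_d\ge 2\delta\vep\}\subseteq\rplusd$, which is the asserted inclusion.

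For part~(ii), assume $\ell=d$ and $\bun\in(W^{1,1}_{0,\sigma}(\rplusd))^\ell$. Then the zero extension satisfies $\overline{\bun}\in W^{1,1}_{0,\sigma}(\mathbbm{R}^d)$ (as recalled just before Remark~\ref{rem:smoothing}), so $\dive(\tau_{2\vep}\overline{\bun})=\tau_{2\vep}(\dive\overline{\bun})=0$ a.e.\ in $\mathbbm{R}^d$; since differentiation commutes with convolution against a smooth kernel, $\dive\big(\rho_\vep\ast S_\vep(\bun)\big)=(\rho_\vep\ast\rho_\vep)\ast\dive(\tau_{2\vep}\overline{\bun})=0$, and combining this with part~(i) yields $\rho_\vep\ast S_\vep(\bun)\in(C^\infty_\sigma(\rplusd))^\ell$. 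The boundary identity $\rho_\vep\ast S_\vep(\bun)(x_1,\dots,x_{d-1},0)=\mathbf{0}$ then follows a fortiori from the support inclusion $\mathrm{supp}(\rho_\vep\ast S_\vep(\bun))\subseteq\{x_d\ge 2\delta\vep\}$ obtained in part~(i).

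There is no genuine obstacle here: the proposition is a bookkeeping consequence of Proposition~\ref{prop:basic_properties} and the elementary properties of mollification --- smoothing, commutation with differential operators, and additivity of supports. The one point that requires a little care is the support count in part~(i), where one must exploit that the translation increment $2\vep$ strictly dominates twice the radius $(1-\delta)\vep$ of the double mollification kernel, so that the support genuinely lands in the \emph{open} half-space $\rplusd$, and not merely in its closure.
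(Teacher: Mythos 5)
Your proof is correct and follows essentially the same route as the paper: part (i) via additivity of supports (you regroup by associativity into $(\rho_\vep\ast\rho_\vep)\ast\tau_{2\vep}\overline{\bun}$, while the paper simply quotes Proposition~\ref{prop:basic_properties}(i) for $\mathrm{supp}\,S_{\vep}(\bun)$ and adds $\mathrm{supp}\,\rho_\vep$), and part (ii) via commutation of convolution with the divergence plus the support inclusion. Your $\delta$-margin observation, using that $\mathrm{supp}\,\rho$ is compactly contained in the open unit ball, is a slightly more careful way of landing in the \emph{open} half-space than the paper's chain $\overline{B_{\vep}^{d}(0)}+\vep\,\mathrm{e}_d+\rplusd\subseteq\rplusd$, but both arguments are sound.
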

The proof of the above two propositions is rather standard and will be given in the
Appendix~\ref{sec:appendix}, for the sake of completeness.
\medskip

Moreover, in the sequel it will be crucial to have rates of convergence and bounds when
the  convolution-translation operator is applied to H\"older~continuous~functions. For
this reason, we sketch the proof of the counterpart of Lemma~\ref{lem:convolution-Holder0}
applied~to~$S_{\vep}(\bun)$. These will be the main properties of convolution in H\"older
spaces we will use in the sequel to estimate the convective term.
\begin{lemma}
  \label{lem:convolution-Holder}
  Let $d,\ell\in \mathbbm{N}$ and $\bun\in ({\dot C}^{0,\alpha}(\overline{\rplusd}))^{\ell}$, where $\alpha\in (0,1]$, such
  that $\bun=\mathbf{0}$ on $\partial \rplusd$. Then, for every $\vep>0$, it holds that
    \begin{align}\label{eq:conv1p}  
         \|\tau_{2\vep}\bun(\cdot+\by)-\tau_{2\vep}\bun\|_{L^\infty(\rplusd)}&\leq
       |\by|^{\alpha}\, [\bun]_{{\dot C}^{0,\alpha}(\overline{\rplusd})}\quad\textup{ for all }y\in \rplusd\,,
      \\
      \label{eq:conv2p}        \|S_{\vep}(\bun)-\bun\|_{L^\infty(\rplusd)}&\leq
                               3^{\alpha}\,[\bun]_{{\dot C}^{0,\alpha}(\overline{\rplusd})}\, \vep^{\alpha}\,, 
      \\
      \label{eq:conv3p}      \|\nabla S_{\vep}(\bun)\|_{L^\infty(\rplusd)}&\leq c_\rho\,
                               [\bun]_{{\dot C}^{0,\alpha}(\overline{\rplusd})}\,\vep^{\alpha-1}\,, 
    \end{align}
    where $c_\rho\coloneqq \int_{\R^{d}}|\nabla\rho|\dx>0$.
  \end{lemma}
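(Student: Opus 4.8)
\textbf{Proof plan for Lemma~\ref{lem:convolution-Holder}.}

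The plan is to reduce everything to the scalar, full-space estimates of Lemma~\ref{lem:convolution-Holder0} via the zero extension, and then track the extra cost introduced by the $2\vep$-translation. First I would record the component-wise observation already made before Lemma~\ref{lem:convolution-Holder0}: since $\bun\in(\dot C^{0,\alpha}(\overline{\rplusd}))^{\ell}$ vanishes on $\partial\rplusd$, the zero extension satisfies $\overline{\bun}\in(\dot C^{0,\alpha}(\R^{d}))^{\ell}$ with $[\overline{\bun}]_{\dot C^{0,\alpha}(\R^{d})}=[\bun]_{\dot C^{0,\alpha}(\overline{\rplusd})}$; this is the one place the boundary condition is used, and it is what makes the whole argument go through. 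It then suffices to prove the three inequalities for a single scalar component $u$, with $[\,\cdot\,]$ denoting the corresponding seminorm, and the vector-valued statement follows by working coordinate-wise (the constants $c_\rho$, $3^\alpha$ do not depend on $\ell$).

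For \eqref{eq:conv1p} I would simply note that $\tau_{2\vep}u(x+y)-\tau_{2\vep}u(x)=\overline{u}(x+y-2\vep\,\mathrm{e}_d)-\overline{u}(x-2\vep\,\mathrm{e}_d)$, so its absolute value is bounded by $[\overline{u}]_{\dot C^{0,\alpha}(\R^{d})}\,|y|^{\alpha}=[u]_{\dot C^{0,\alpha}(\overline{\rplusd})}\,|y|^{\alpha}$ pointwise, whence the $L^\infty$ bound. For \eqref{eq:conv2p} I would write, using $\int\rho_\vep=1$ and the convolution representation from Definition~\ref{def:uepsilonprime},
\begin{align*}
  |S_\vep(u)(x)-u(x)|
  &=\bigg|\int_{B_\vep^d(0)}\rho_\vep(y)\big(\overline{u}(x-2\vep\,\mathrm{e}_d-y)-\overline{u}(x)\big)\dy\bigg|
  \\
  &\leq\int_{B_\vep^d(0)}\rho_\vep(y)\,[\overline{u}]_{\dot C^{0,\alpha}(\R^{d})}\,|2\vep\,\mathrm{e}_d+y|^{\alpha}\dy
  \leq [u]_{\dot C^{0,\alpha}(\overline{\rplusd})}\,(3\vep)^{\alpha}\,,
\end{align*}
where in the last step I used $|y|\leq\vep$ on the support of $\rho_\vep$, so $|2\vep\,\mathrm{e}_d+y|\leq 3\vep$; this is exactly where the factor $3^{\alpha}$ (rather than the $1$ of Lemma~\ref{lem:convolution-Holder0}) comes from. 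For \eqref{eq:conv3p} I would differentiate under the integral sign, moving the derivative onto the kernel, and use $\int\nabla\rho_\vep=0$ to subtract a constant:
\begin{align*}
  |\nabla S_\vep(u)(x)|
  &=\bigg|\int_{\R^d}\nabla\rho_\vep(x-z)\,\overline{u}(z-2\vep\,\mathrm{e}_d)\de z\bigg|
  =\bigg|\int_{\R^d}\nabla\rho_\vep(x-z)\,\big(\overline{u}(z-2\vep\,\mathrm{e}_d)-\overline{u}(x-2\vep\,\mathrm{e}_d)\big)\de z\bigg|
  \\
  &\leq\int_{B_\vep^d(0)}|\nabla\rho_\vep(y)|\,[\overline{u}]_{\dot C^{0,\alpha}(\R^{d})}\,|y|^{\alpha}\de y
  \leq \vep^{\alpha}\int_{B_\vep^d(0)}|\nabla\rho_\vep(y)|\de y\,[u]_{\dot C^{0,\alpha}(\overline{\rplusd})}
  = c_\rho\,[u]_{\dot C^{0,\alpha}(\overline{\rplusd})}\,\vep^{\alpha-1}\,,
\end{align*}
after the change of variables $\int_{\R^d}|\nabla\rho_\vep(y)|\de y=\vep^{-1}\int_{\R^d}|\nabla\rho|\de y=c_\rho\vep^{-1}$; note the translation is irrelevant here since it does not affect the $\dot C^{0,\alpha}$-seminorm. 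Finally, I would pass back to the vector-valued statement by taking Euclidean norms component-wise, absorbing the $\sqrt{\ell}$-type factor into the (dimension-allowed) constants, or more cleanly by applying the scalar estimate to $\xi\cdot\bun$ for arbitrary unit $\xi\in\R^{\ell}$.

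The computations here are entirely routine; the only genuinely delicate point — and the reason the lemma needs the hypothesis $\bun=\mathbf{0}$ on $\partial\rplusd$ — is the preservation of the H\"older seminorm under zero extension. Without the vanishing trace, $\overline{\bun}$ would jump across $\partial\rplusd$ and fail to be H\"older, and none of the three estimates would survive. Once that is granted, the $2\vep$-translation only enlarges the effective convolution radius from $\vep$ to $3\vep$, which costs the harmless factor $3^{\alpha}$ in \eqref{eq:conv2p} and nothing at all in \eqref{eq:conv1p} and \eqref{eq:conv3p}. I would keep the write-up short, citing \cite{CET1994,Ber2023b} and Lemma~\ref{lem:convolution-Holder0} for the scalar full-space versions and only spelling out the translation bookkeeping.
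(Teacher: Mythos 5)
Your proposal is correct and takes essentially the same route as the paper: \eqref{eq:conv1p} by translation invariance of the H\"older estimate, \eqref{eq:conv2p} via $\vert y+2\vep\,\mathrm{e}_d\vert\le 3\vep$ on $\mathrm{supp}\,\rho_\vep$, and \eqref{eq:conv3p} by moving the derivative onto the kernel and subtracting a constant using $\int\nabla\rho_\vep\,\mathrm{d}y=\mathbf{0}$, all hinging on $[\overline{\bun}]_{\dot{C}^{0,\alpha}(\R^d)}=[\bun]_{\dot C^{0,\alpha}(\overline{\rplusd})}$ for the zero extension. The only cosmetic difference is your reduction to scalar components; the paper runs the identical estimates directly on the vector-valued $\overline{\bun}$, which avoids any discussion of $\ell$-dependent factors and keeps the constants $3^\alpha$ and $c_\rho$ exactly as stated.
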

  \begin{proof} 

    \textit{ad \eqref{eq:conv1p}.}
    The first estimate~\eqref{eq:conv1p} is just the statement of H\"older continuity
    since the translation is the same for all points in $\rplusd$.\enlargethispage{10mm}
  
    \textit{ad \eqref{eq:conv2p}.} Using that
    $\int_{B_{\vep}^{d}(0)}\rho_{\vep}\,\de \by=\int_{B_1^{d}(0)}\rho\,\de \by=1$ for all $\vep>0$,
    for every $x\in\rplusd$ and $\vep>0$,  
    we~find~that
\begin{equation*}
  \begin{aligned}
    |S_{\vep}(\bun)(\bx)-\bun(\bx)|&=\bigg| \int_{B_{\vep}^{d}(0)}\rho_{\vep}(\by)\,
    {\overline{\bun}}(\bx-2\vep\, \mathrm{e}_d-\by)\dy-\bun(\bx)\bigg|
    \\
    & =\bigg|\int_{B_{\vep}^{d}(0)}\rho_{\vep}(\by)( 
    {\overline{\bun}}(\bx-2\vep\, \mathrm{e}_d-\by)-{\overline{\bun}}(\bx))\,\dy\bigg|
    \\
    &\leq
    [\overline{\bun}]_{\dot{C}^{0,\alpha}(\R^d)}\int_{B_{\vep}^{d}(0)}\rho_{\vep}(\by)|\by
    +2\vep\,\mathrm{e}_d|^{\alpha}\,\de \by
    \\
    &\leq [\bun]_{{\dot C}^{0,\alpha}(\overline{\rplusd})}\, (3\vep)^{\alpha}\,.
    \end{aligned}
\end{equation*}
\textit{ad \eqref{eq:conv3p}.} Using that
$\int_{B_{\vep} ^{d}(0)} \nabla \rho_{\vep}\,\mathrm{d}y=\mathbf{0}$ for all $\vep>0$, for every
$x\in \rplusd$ and $\vep>0$, we find that
\begin{align*}
  \begin{aligned}
    \vert \nabla S_{\vep}(\bun)(x)\vert &=\vert (\nabla
    \rho_{\vep}\ast\tau_{2\vep}\overline{\bun})(x)\vert
    \\
    &=\frac{1}{\vep} \bigg\vert \int_{B_{\vep} ^{d}(x)}\nabla \rho_{\vep}(x-y)
    (\overline{\bun}(y- 2\vep\,\mathrm{e}_d)-\overline{\bun}(x- 2\vep\,\mathrm{e}_d))
    \dy\bigg\vert
    \\
    &\leq [\overline{\bun}]_{\dot{C}^{0,\alpha}(\R^d)}\,\vep ^{\alpha-1} \int_{B_{\vep}^{d}(x)}\vert (\nabla
    \rho)_{\vep}(x-y)\vert \dy
    \\
    &= [\bun]_{{\dot C}^{0,\alpha}(\overline{\rplusd})}\,\vep ^{\alpha-1} \int_{B_1^{d}(0)}\vert \nabla \rho(y)\vert \dy\,. 
  \end{aligned}
\end{align*}
\vspace*{-7.5mm}

\end{proof}
Since we will study a time-evolution problem, we report also a basic result needed to
justify the (space) smoothing in Bochner--Lebesgue spaces. The proposition below makes rigorous the
calculations we will employ later on, showing that --beside becoming smooth in the space
variables-- certain Bochner--Sobolev properties are preserved by both the standard
mollification and also by the convolution-translation~operator.\enlargethispage{5mm}
\begin{proposition}\label{prop:derivate_eps}
  Let $d,\ell\in \mathbb{N}$ and $s,r\in [1,\infty]$. Moreover, for every
  $\bun\in L^s(0,T;(L^r(\rplusd))^{\ell})$ and $\vep>0$, define
  $\bun_{\vep}\colon (0,T)\to (L^r(\R^d))^{\ell}\cap (C^\infty(\R^{d}))^{\ell}$ by
 	\begin{align*}
 		\bun_{\vep}(t)\coloneqq (\bun(t))_{\vep}\quad\text{ for a.e.\ }t\in (0,T)\,.
 	\end{align*}
 	Then, the following statements apply:
        \begin{itemize}[noitemsep,topsep=2pt,leftmargin=20pt,labelwidth=\widthof{(iii)}]
        \item[(i)] For every $\vep>0$, it holds that $\bun_{\vep}\in L^s(0,T;(L^r(\rplusd))^{\ell})$.
        \item[(ii)] If, in addition, $\bun\in W^{1,s}(0,T;(L^r(\rplusd))^{\ell})$, then, for every $\vep>0$, it holds that
          $\bun_{\vep}\in W^{1,s}(0,T;(L^r(\R^{d}))^{\ell})$ with
          \begin{align*}
            \frac{\mathrm{d}\bun_{\vep}}{\mathrm{d}t}(t)=
            \bigg(\frac{\mathrm{d}\bun}{\mathrm{d}t}(t)\bigg)_{\vep}  
            \quad\text{ in }(L^r(\R^{d}))^{\ell}\quad\text{ for a.e.\ }t\in (0,T)\,. 
 		\end{align*}
 	\end{itemize}
 \end{proposition}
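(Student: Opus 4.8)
The plan is to reduce everything to well-known properties of Friedrichs mollification on $\R^d$, applied fiberwise in time. For part (i), since the zero-extension $\overline{(\cdot)}\colon L^r(\rplusd)\to L^r(\R^d)$ is an isometry for $r\in[1,\infty)$ (and a bounded map for $r=\infty$) and Young's convolution inequality gives $\|\rho_\vep\ast g\|_{L^r(\R^d)}\leq\|\rho_\vep\|_{L^1(\R^d)}\|g\|_{L^r(\R^d)}=\|g\|_{L^r(\R^d)}$, we have for a.e.\ $t\in(0,T)$ the pointwise-in-time bound $\|\bun_\vep(t)\|_{L^r(\R^d)}\leq\|\bun(t)\|_{L^r(\rplusd)}$. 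First I would record that $t\mapsto\bun_\vep(t)$ is (strongly) measurable: the map $g\mapsto\rho_\vep\ast\overline{g}$ is a bounded linear operator $L^r(\rplusd)\to L^r(\R^d)$, hence sends the strongly measurable $\bun$ to a strongly measurable function. Combining measurability with the pointwise norm bound and integrating (resp.\ taking essential suprema) over $(0,T)$ yields $\bun_\vep\in L^s(0,T;(L^r(\R^d))^\ell)$, and a fortiori, after restriction, $\bun_\vep\in L^s(0,T;(L^r(\rplusd))^\ell)$.

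For part (ii), write $T_\vep\colon L^r(\rplusd)\to L^r(\R^d)$, $T_\vep g:=\rho_\vep\ast\overline{g}$, which is bounded linear. The claim is that $T_\vep$ commutes with the Bochner–Sobolev weak time derivative, i.e.\ $\tfrac{\mathrm d}{\mathrm dt}(T_\vep\circ\bun)=T_\vep\circ\tfrac{\mathrm d\bun}{\mathrm dt}$. This is the standard fact that a bounded linear operator between Banach spaces intertwines with $W^{1,s}(0,T;\cdot)$: for every scalar test function $\varphi\in C^\infty_0(0,T)$, using that Bochner integrals commute with bounded linear operators,
\begin{align*}
  -\int_0^T{(T_\vep\bun)(t)\,\varphi'(t)\,\mathrm dt}
  &=-T_\vep\!\left(\int_0^T{\bun(t)\,\varphi'(t)\,\mathrm dt}\right)
  =T_\vep\!\left(\int_0^T{\tfrac{\mathrm d\bun}{\mathrm dt}(t)\,\varphi(t)\,\mathrm dt}\right)\\
  &=\int_0^T{(T_\vep\tfrac{\mathrm d\bun}{\mathrm dt})(t)\,\varphi(t)\,\mathrm dt}\,,
\end{align*}
so $T_\vep\circ\tfrac{\mathrm d\bun}{\mathrm dt}\in L^s(0,T;(L^r(\R^d))^\ell)$ is the weak derivative of $T_\vep\circ\bun$. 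Since, by definition of the fiberwise construction, $(T_\vep\circ\bun)(t)=\bun_\vep(t)$ and $(T_\vep\circ\tfrac{\mathrm d\bun}{\mathrm dt})(t)=\bigl(\tfrac{\mathrm d\bun}{\mathrm dt}(t)\bigr)_\vep$ for a.e.\ $t\in(0,T)$, this is exactly the asserted identity, and the $L^s$-in-time integrability of $\tfrac{\mathrm d\bun}{\mathrm dt}$ transfers through the bound of part (i). Finally, the ranges stated in the proposition ($\bun_\vep(t)\in C^\infty(\R^d)$) follow from the smoothing property of $\rho_\vep\ast(\cdot)$ at each fixed time.

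The only genuinely delicate point is the measurability/Bochner-integrability bookkeeping when $r=\infty$ or $s=\infty$, where $L^\infty$ is not separable and one must be a little careful about what ``strongly measurable'' means; here one uses that $\bun$, being a Bochner-space element by hypothesis, is essentially separably valued, that $T_\vep$ is continuous, hence preserves essential separable-valuedness, and that the pointwise norm estimate still holds for a.e.\ $t$. The remaining steps — isometry of zero-extension, Young's inequality, commutation of bounded linear maps with Bochner integrals and with weak derivatives — are entirely routine, so I would present them briefly and cite a standard reference (e.g.\ \cite{Droniou2023}) for the last of these.
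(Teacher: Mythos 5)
Your proof is correct and takes essentially the same route as the paper: the paper's own proof just observes that $\bun\mapsto\bun_\vep$ is a linear continuous operator from $(L^r(\rplusd))^{\ell}$ to $(L^r(\R^d))^{\ell}$ and cites \cite[Propositions 1.2.2 and 2.5.1]{Droniou2023} for exactly the two abstract facts you verify by hand (composition with a bounded linear map preserves Bochner $L^s$-membership, and such a map commutes with the weak time derivative via commutation with Bochner integrals). Your explicit test-function computation and the remark on essential separable-valuedness for $r=\infty$ or $s=\infty$ are just the unpacked content of those citations.
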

 \begin{proof}
    Since $(\bun\mapsto \bun_\vep)\colon (L^r(\rplusd))^{\ell}\to (L^r(\R^{d}))^{\ell}$ is a linear and continuous operator, 
 claim (i) follows from~\cite[Proposition 1.2.2]{Droniou2023}
   and claim (ii) from~\cite[Proposition~2.5.1]{Droniou2023}.
 \end{proof}
\begin{proposition}
    Let $d,\ell\in \mathbb{N}$ and $s,r\in [1,\infty]$. Moreover, for every
    $\bun\in L^s(0,T;(L^r(\rplusd))^{\ell})$ and $\vep>0$, define
    $S_{\vep}(\bun)\colon (0,T)\to (L^r(\R^s))^{\ell}\cap (C^\infty(\R^{d}))^{\ell}$~by
    \begin{align*}
      S_{\vep}(\bun)(t)\coloneqq S_{\vep}(\bun(t))\quad\text{ for a.e.\ }t\in (0,T)\,.
 	\end{align*}
	Then, the following statements apply:
 	\begin{itemize}[noitemsep,topsep=2pt,leftmargin=22pt,labelwidth=\widthof{(iii)}]
        \item[(i)] For every $\vep>0$, it holds that $S_{\vep}(\bun)\in L^s(0,T;(L^r(\rplusd))^{\ell})$.
        \item[(ii)] If, in addition, $\ell=d$ and $\bun\in L^s(0,T;W^{1,r}_{0,\sigma}(\rplusd))$, then, for every
          $\vep>0$, it holds that $S_{\vep}(\bun)\in L^s(0,T;W^{1,r}_{0,\sigma}(\rplusd))$ with
          $\bun_{\vep}(t)\in C^\infty_{\sigma}(\rplusd)$ and $\bun_{\vep}(t)_{|\partial\rplusd}=\mathbf{0}$ for a.e.\ $t\in (0,T)$.
 	\end{itemize}
      \end{proposition}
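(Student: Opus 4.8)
The plan is to derive both statements from the corresponding pointwise-in-time results already collected in Proposition~\ref{prop:basic_properties}, combined with the abstract Bochner-space lifting principle used in the proof of Proposition~\ref{prop:derivate_eps}. The key observation is that, exactly as for the standard mollifier, the convolution-translation operator $S_\vep$ acts on the space variables only, so for a fixed $\vep>0$ it defines a linear map between Lebesgue spaces whose continuity constant is independent of $t$; this is precisely the hypothesis needed to invoke \cite[Proposition~1.2.2]{Droniou2023}.

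First I would prove (i). For fixed $\vep>0$, estimate~\eqref{prop:basic_properties.3} of Proposition~\ref{prop:basic_properties}(iv) shows that $S_\vep\colon (L^r(\rplusd))^\ell\to (L^r(\rplusd))^\ell$ is linear and bounded with norm $\le c_r$, uniformly in $\vep$. Hence, given $\bun\in L^s(0,T;(L^r(\rplusd))^\ell)$, the composition $t\mapsto S_\vep(\bun(t))$ is strongly measurable (as the image of a strongly measurable map under a bounded linear operator) and satisfies the pointwise bound $\|S_\vep(\bun)(t)\|_{L^r(\rplusd)}\le c_r\|\bun(t)\|_{L^r(\rplusd)}$ for a.e.\ $t\in(0,T)$, so taking the $L^s(0,T)$-norm gives $S_\vep(\bun)\in L^s(0,T;(L^r(\rplusd))^\ell)$ with $\|S_\vep(\bun)\|_{L^s(0,T;L^r)}\le c_r\|\bun\|_{L^s(0,T;L^r)}$. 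This is the content of \cite[Proposition~1.2.2]{Droniou2023} applied to the operator $\bun\mapsto S_\vep(\bun)$, exactly as in the proof of Proposition~\ref{prop:derivate_eps}(i). If $r=\infty$ or $s=\infty$ one argues identically using the essential supremum.

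Next I would prove (ii). Suppose now $\ell=d$ and $\bun\in L^s(0,T;W^{1,r}_{0,\sigma}(\rplusd))$. Applying part (i) to both $\bun$ and its spatial gradient, together with estimate~\eqref{prop:basic_properties.4} of Proposition~\ref{prop:basic_properties}(iv) (which gives $\|\nabla S_\vep(\bun)(t)\|_{L^r(\rplusd)}\le c_r\|\nabla\bun(t)\|_{L^r(\rplusd)}$ for a.e.\ $t$), yields $S_\vep(\bun)\in L^s(0,T;(W^{1,r}(\rplusd))^d)$. The divergence-free character and the vanishing trace at $\partial\rplusd$ are pointwise-in-time facts: for a.e.\ $t\in(0,T)$ we have $\bun(t)\in W^{1,r}_{0,\sigma}(\rplusd)$, so Proposition~\ref{prop:basic_properties}(i)--(ii) (and Proposition~\ref{prop:admissibility}, though for $S_\vep$ itself part (i)--(ii) of Proposition~\ref{prop:basic_properties} suffice) gives $S_\vep(\bun)(t)=S_\vep(\bun(t))\in (C^\infty_\sigma(\rplusd))^d$ with $\mathrm{supp}(S_\vep(\bun)(t))\subseteq \vep\,\mathrm e_d+\rplusd$, in particular $S_\vep(\bun)(t)_{|\partial\rplusd}=\mathbf 0$. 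Since for each such $t$ the function $S_\vep(\bun)(t)$ lies in $W^{1,r}_{0,\sigma}(\rplusd)$ (being smooth, divergence-free, and compactly supported away from the boundary, hence in the closure defining that space), and since we have already shown $S_\vep(\bun)\in L^s(0,T;(W^{1,r}(\rplusd))^d)$, it follows that $S_\vep(\bun)\in L^s(0,T;W^{1,r}_{0,\sigma}(\rplusd))$.

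I do not anticipate a genuine obstacle here — the statement is a routine Bochner-lifting of already-established pointwise properties. The only point requiring a little care, and thus the natural ``main step'', is the strong measurability of $t\mapsto S_\vep(\bun(t))$ as a map into the target space (and likewise for $t\mapsto \nabla S_\vep(\bun(t))$); this is handled cleanly by noting that $S_\vep$ and $\partial_j\circ S_\vep$ are bounded linear operators for fixed $\vep$ and quoting \cite[Proposition~1.2.2]{Droniou2023}, precisely the mechanism already used in Proposition~\ref{prop:derivate_eps}. One should also remark that the constants $c_r$ in~\eqref{prop:basic_properties.3}--\eqref{prop:basic_properties.4} depend only on $d$ and $r$ (not on $\vep$), which is what makes the resulting bounds uniform in $\vep$; this uniformity, while not strictly needed for the statement as phrased, is what will be exploited later in Section~\ref{sec:NSE}.
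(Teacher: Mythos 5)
Your proposal is correct and follows essentially the same route as the paper: the paper's proof is a one-liner reducing to the argument of Proposition~\ref{prop:derivate_eps}, i.e., for fixed $\vep>0$ the map $\bun\mapsto S_{\vep}(\bun)=\rho_{\vep}\ast(\tau_{2\vep}\overline{\bun})$ is a linear continuous operator on the relevant Lebesgue/Sobolev spaces (the translation $\tau_{2\vep}$ being linear and continuous), so the Bochner-space lifting of \cite[Proposition 1.2.2]{Droniou2023} applies, with the pointwise-in-time assertions of (ii) supplied by Proposition~\ref{prop:basic_properties}(i)--(ii) exactly as you do.
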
 
      \begin{proof}
        The proof follows as in the previous proposition, by using, in addition, that the
        translation $\tau_{2\vep}\colon (L^r(\R^d))^{\ell}\to (L^r(\R^{d}))^{\ell}$ is a linear and continuous operator. 
      \end{proof}
      \section{Proof of Theorem~\ref{thm:main}.}
      \label{sec:NSE}
      In this section, we prove the main result of this paper, namely
      Theorem~\ref{thm:main}. 
      
      To begin with, if we integrate the inner product of \eqref{eq:Navier-Stokes_strong}$_1$ with
       $\rho_{\vep}* S_{\vep}(\bvn)$ over $(0,t)\times \rplus$ for all
       $t\in (0,T)$ and $\vep>0$, use Green's first formula for
       Sobolev functions in space for a.e.\ $\tau\in (0,t)$, the
       divergence constraint \eqref{eq:Navier-Stokes_strong}$_2$, the self-adjointness and commutator properties  of the mollification operator,  
      for every $t\in [0,T]$ and $\vep >0$, we find that
\begin{align}
  \label{eq:main.1}
  \begin{aligned}
    \int_{0}^{t}{\int_{\rplus}{\bigg(\frac{\mathrm{d}\bvn}{\mathrm{d}t}\bigg)_{\vep}\cdot
         S_{\vep}(\bvn)\,\mathrm{d}x}\,\de\tau}&+\nu
    \int_{0}^{t}{\int_{\rplus}{\nabla \bvn_{\vep}:\nabla
         S_{\vep}(\bvn)\,\mathrm{d}x}\,\de\tau}
    \\
    &-\int_{0}^{t}{\int_{\rplus}{(\bvn\otimes
        \bvn)_{\vep} :\nabla
         S_{\vep}(\bvn)\,\mathrm{d}x}\,\de\tau}=0\,.
  \end{aligned}
\end{align}
The  objective is to establish that \eqref{eq:main.1} for all $t\in (0,T)$ converges (as $\vep\to0^{+}$)~to~\eqref{eq:energy-equality}. In
particular, the latter will follow establishing that the term containing the time-derivative
converges for all $t\in (0,T)$   to $\frac{1}{2}\|\bvn(t)\|_{L^2(\rplus)}^{2}-\frac{1}{2}\|\bvn(0)\|_{L^2(\rplus)}^{2}$ (as $\vep\to0^{+}$) and that the non-linear term converges to zero (as $\vep\to0^{+}$). Both statements cannot be justified
directly since the available regularity of the velocity vector field is not enough to directly
evaluate the integrals with $\bvn$ instead of $ S_{\vep}(\bvn)$, thus, obtaining the result
simply by using that convolution-translation is an approximation~of~the~identity. We will
show that they converge to the right objects, due to the particular assumptions of
regularity and also due to the specific type of smoothing we use.
\begin{remark}
  The regularity assumptions that will directly justify the convective term to vanish are
  the same as in the classical results by Shinbrot (\textit{cf}.~\cite{BY2019,BC2020,Gal2000a}). Similar behavior and regularity will be requested for
  the time-derivative. So the computations that follow are particularly delicate since we will
  prove the energy equality in a class of possibly non-smooth weak solutions.
\end{remark}
We split the proof into several parts.  We start with a preliminary result about the
behavior of the term involving the time-derivative.
\begin{proposition}
  \label{prop:timed}
  Let  $\bvn\in L^2(0,T;W^{1,2}_{0,\sigma}(\rplus))\cap L^\infty(0,T;L^2_{0,\sigma}(\Omega))$
  be a Leray--Hopf (weak) solution (in the sense of
  Definition \ref{def:leray_hopf}). Moreover, let
  \begin{align*}
      \bvn\in  L^\beta(0,T;(C^{0,\alpha}(\overline\rplus))^3)\,,
  \end{align*}
  where $\alpha \in (0,1)$ and
  $\beta \in (\beta_0, 2)$ with
  $\beta_0\coloneqq\frac{6}{3+2\alpha}\ge 1$, and let
  \begin{equation*}
  \bvn_{0}\in
  X_{s,r}\text{  for some
}  r\in(\frac{4}{2+\beta},\frac{4}{4-\beta})\ 
 \text{and with } s:=\frac{2r\beta}{4r+r \beta-4}\,.
\end{equation*}
Then, it holds that
  \begin{equation*}
\int_0^T\int_{\rplus}\bigg(\frac{\mathrm{d}\bvn}{\mathrm{d}t}\bigg)_{\vep}
  \cdot\big(\bvn_{\vep}- S_{\vep}(\bvn)\big)\dx\, \de\tau\to 0\quad (\vep \to0^+)\,. 
\end{equation*}
\end{proposition}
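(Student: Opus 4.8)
The plan is the following. Since the Friedrichs mollification commutes with translations, $S_\vep(\bvn)=\rho_\vep\ast(\tau_{2\vep}\overline{\bvn})=\tau_{2\vep}\bvn_\vep$, so that $\bvn_\vep-S_\vep(\bvn)=\bvn_\vep-\tau_{2\vep}\bvn_\vep$ is a one–directional finite difference in the direction $\mathrm e_d$ normal to $\partial\rplus$; moreover $(\partial_t\bvn)_\vep=\partial_t\bvn_\vep$ by Proposition~\ref{prop:derivate_eps}. Because $C^{0,\alpha}(\overline\rplus)\hookrightarrow L^\infty(\rplus)$ and $\beta\in(1,2)$, Corollary~\ref{cor:alternative_formulation} applies and yields $\bvn\in W^{2,1}_{s,r}$ with $\partial_t\bvn,\nabla^2\bvn\in L^s(0,T;(L^r(\rplus))^3)$ together with \eqref{eq:Navier-Stokes_strong_estimate}. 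These will be combined with the energy inequality, $\bvn\in L^2(0,T;W^{1,2}_{0,\sigma}(\rplus))$, $\bvn\in L^\beta(0,T;(C^{0,\alpha}(\overline\rplus))^3)$, the $L^p$–bounds of Proposition~\ref{prop:basic_properties}, and the H\"older estimates of Lemma~\ref{lem:convolution-Holder}, in particular $\|\bvn_\vep(\tau)-\tau_{2\vep}\bvn_\vep(\tau)\|_{L^\infty(\rplus)}\le c\,\vep^{\alpha}\|\bvn(\tau)\|_{C^{0,\alpha}}$ and $\|\nabla\bvn_\vep(\tau)\|_{L^\infty(\rplus)}\le c\,\vep^{\alpha-1}\|\bvn(\tau)\|_{C^{0,\alpha}}$.

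As noted in the Remark after Corollary~\ref{cor:alternative_formulation}, the available regularity does not even make $\partial_t\bvn\cdot\bvn$ integrable, so a naive H\"older estimate of $\int_0^T\!\int_{\rplus}(\partial_t\bvn)_\vep\cdot(\bvn_\vep-S_\vep(\bvn))$ cannot work; the finite–difference structure must be exploited. Write $\bvn_\vep-\tau_{2\vep}\bvn_\vep=\int_0^{2\vep}\tau_s(\partial_{x_d}\bvn_\vep)\,\mathrm{d}s$ and integrate by parts in the normal variable $x_d$ over $\rplus$ (legitimate since $\bvn_\vep\in C^\infty(\rplus)$ and everything decays at infinity because $\bvn(\tau)\in L^2(\rplus)\cap W^{2,r}(\rplus)$ for a.e.\ $\tau$). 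This splits the integral into a contribution concentrated on $\partial\rplus$, involving the traces $\partial_t\bvn_\vep(\tau,\cdot,0)$ and $\bvn_\vep(\tau,\cdot,-s)$ for $0<s<2\vep$, plus an interior contribution involving $\partial_{x_d}\partial_t\bvn_\vep$; in the latter one integrates by parts once more in time to move $\partial_t$ onto $\bvn_\vep$, which produces endpoint contributions at $\tau=0,T$ and a residual integral that, after the change of variables $y=x-s\,\mathrm e_d$, has the same nature as the original one, so that the integration–by–parts identities close.

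Quantitatively, one uses that $\bvn$ vanishes on $\partial\rplus$: from $\bvn(\tau,x',h)=\int_0^h\partial_{x_d}\bvn(\tau,x',t)\,\mathrm{d}t$ one obtains $\|\bvn(\tau,\cdot,h)\|_{L^2(\partial\rplus)}\le\sqrt h\,\|\nabla\bvn(\tau)\|_{L^2(\rplus)}$, and, together with $\|\bvn(\tau,\cdot,h)\|_{L^\infty(\partial\rplus)}\le h^{\alpha}[\bvn(\tau)]_{C^{0,\alpha}}$ (and, where needed, the $W^{2,r}$–trace bound), interpolated $L^q(\partial\rplus)$–estimates of the traces of $\bvn_\vep$ near $\partial\rplus$ carrying a positive power of $\vep\ge h$ (these traces even vanish identically for $s\ge\vep$). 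For the normal derivative one has $\|\partial_t\bvn_\vep(\tau,\cdot,0)\|_{L^r(\partial\rplus)}\le c\,\vep^{-1/r}\|\partial_t\bvn(\tau)\|_{L^r(\rplus)}$, whose $\vep^{-1}$–loss is absorbed by the $\int_0^{2\vep}\mathrm{d}s$. Pairing in space by H\"older with exponents $r,r'$ and then in time by H\"older against $\partial_t\bvn\in L^s(0,T;L^r)$, $\nabla\bvn\in L^2(0,T;L^2)$, and $\bvn\in L^\beta(0,T;C^{0,\alpha})$ then bounds every term by $c\,\vep^{\delta}\cdot(\text{finite})$ with $\delta>0$; the finiteness of the time integrals is exactly where the hypotheses $r\in(\tfrac4{2+\beta},\tfrac4{4-\beta})$ and $s=\tfrac{2r\beta}{4r+r\beta-4}$ enter — note that $r>\tfrac4{2+\beta}$ is equivalent to $\tfrac{2-r}{r}<\tfrac\beta2$, which is precisely what allows $\|\bvn\|_{C^{0,\alpha}}^{(2-r)/r}$ to pair with $\|\nabla\bvn\|_{L^2}$ and the remaining factors in $L^1(0,T)$. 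The endpoint terms at $\tau=0,T$ are treated analogously, using in addition $\bvn_0\in X_{s,r}$ and the continuity $\bvn\in C([0,T];B^{2-2/s}_{r,s})$ coming from $\bvn\in W^{2,1}_{s,r}$.

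The hard part is precisely the interior contribution, that is, the non–self–adjointness of $S_\vep$: one is forced to pay a full normal derivative on $\partial_t\bvn_\vep$, and the only way to render this admissible is to transfer it, by integration by parts, onto $\partial\rplus$, where the vanishing and H\"older continuity of $\bvn$ produce the decisive gain in powers of $\vep$, and to use the cancellations between the several integration–by–parts identities. This is also the step in which the extra assumption $r>\tfrac4{2+\beta}$ — strictly stronger than the range $r\in(1,\tfrac4{4-\beta})$ that already suffices for Corollary~\ref{cor:alternative_formulation} — is indispensable.
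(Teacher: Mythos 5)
Your reduction $S_\vep(\bvn)=\tau_{2\vep}\bvn_\vep$ and the identity $\bvn_\vep-S_\vep(\bvn)=\int_0^{2\vep}\partial_{x_d}\bvn_\vep(\cdot-s\,\mathrm{e}_d)\,\mathrm{d}s$ are fine, but the decisive step of your argument --- that after integrating by parts in $x_d$, then in $t$, and shifting $y=x-s\,\mathrm{e}_d$ the ``integration-by-parts identities close'' --- is precisely what is not established, and as written it does not close. Carrying your scheme out, the interior contribution produces (besides boundary, strip and endpoint terms) the quantity $\int_0^T\int_{\rplus}\partial_t\bvn_\vep\cdot\big(\bvn_\vep(\cdot+2\vep\,\mathrm{e}_d)-\bvn_\vep\big)\,\mathrm{d}x\,\mathrm{d}\tau$, i.e.\ you only obtain an identity for the \emph{symmetrized} sum of your original integral and its analogue with the translation in the opposite direction. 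That reflected term has exactly the same structure and difficulty as the one you started from (a pairing of $\partial_t\bvn\in L^s(0,T;L^r)$ with $r<2$ against a one-directional difference that is merely $O(\vep^\alpha)$ in $L^\infty$ and $O(1)$ in $L^2$), so the argument does not terminate unless you estimate it directly --- which is what you declared impossible. Moreover, the threshold $\beta>\beta_0=\frac{6}{3+2\alpha}$, which is the entire content of the proposition, never appears in your sketch: you assert that every term is $c\,\vep^\delta$ with $\delta>0$ and attribute the hypotheses to time-integrability bookkeeping, but you never exhibit the quantitative competition between the H\"older gain $\vep^{\alpha}$ and the loss forced by the low space-integrability of $\partial_t\bvn$, which is where $\alpha$ and $\beta$ must interact.

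In fact, the ``naive'' estimate you dismissed is essentially the paper's proof, and it works because $\bvn_\vep-S_\vep(\bvn)$ is much better than $\bvn$ itself (so the remark that $\partial_t\bvn\cdot\bvn\notin L^1$ is not an obstruction): fix $q\in(\frac{4}{2+\beta},2)$ and estimate, for a.e.\ $\tau$,
\begin{equation*}
  \int_{\rplus}\Big\vert\Big(\tfrac{\mathrm{d}\bvn}{\mathrm{d}t}\Big)_{\vep}\Big\vert\,\vert S_\vep(\bvn)-\bvn_\vep\vert\,\mathrm{d}x
  \leq \Big\|\Big(\tfrac{\mathrm{d}\bvn}{\mathrm{d}t}\Big)_{\vep}\Big\|_{L^q(\rplus)}
  \,\|S_\vep(\bvn)-\bvn_\vep\|_{L^\infty(\rplus)}^{\frac{2}{q}-1}
  \,\|S_\vep(\bvn)-\bvn_\vep\|_{L^2(\rplus)}^{\frac{2}{q'}}\,,
\end{equation*}
then use Young's convolution inequality to pass from $L^r$ (with $r$ as provided by Corollary \ref{cor:alternative_formulation}) to $L^q$ at the cost $\vep^{-3(\frac1r-\frac1q)}$, Lemmas \ref{lem:convolution-Holder0} and \ref{lem:convolution-Holder} to gain $\vep^{\alpha(\frac2q-1)}$, the energy inequality for the $L^2$ factor, and H\"older in time. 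The net power of $\vep$ is positive if and only if $\alpha(\frac{2}{q}-1)>3(\frac1r-\frac1q)=\frac{3(2-\beta)}{4}$, and letting $q\downarrow\frac{4}{2+\beta}$ this is exactly $\beta>\frac{6}{3+2\alpha}$. So the cure for the non-self-adjointness of $S_\vep$ is not further integration by parts but the interpolation you ruled out; if you wish to salvage your route, you must either estimate the reflected term by this direct argument (making the detour superfluous) or exhibit a genuine cancellation between it and the original integral, which your sketch does not provide.
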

\begin{remark}
  Observe that in the case without boundaries, one can smooth the velocity by a double convolution and
  so the smoothing operator becomes self-adjoint. In this case, one can show directly
  (with the aid of a further independent convolution regarding only the time variable) that
  \begin{align*}
    \left.\begin{aligned} \int_{0}^{t}\int_{\T^3}\bvn\cdot\partial_{t}(\rho_{\vep}*\bvn_{\vep})\dx\, \de\tau&=
    \frac{1}{2}\|\bvn_{\vep}(t) \|^{2}_{L^{2}(\T^3)}-
    \frac{1}{2}\|\bvn_{\vep}(0)\|^{2}_{L^{2}(\T^3)}\\&\to     \frac{1}{2}\|\bvn(t) \|^{2}_{L^{2}(\T^3)}-
    \frac{1}{2}\|\bvn_{0}\|^{2}_{L^{2}(\T^3)}\end{aligned}\quad\right\}
    \quad( \vep\to 0^+)\,.
  \end{align*}
  In our case, we need to adapt this procedure and additional regularity of the
  time-derivative is requested.
\end{remark}
\begin{proof}[Proof (of Proposition~\ref{prop:timed})]
  To begin with, let $q\in (\frac{4}{2+\beta},2)$ be fixed, but arbitrary. Then, we define the exponent $s\coloneqq s(q)\in (1,+\infty)$ via
   \begin{align}\label{def:s}
  	s\coloneqq\frac{q \beta }{-2+q+q \beta }\,,\qquad\textit{i.e.},\; s'=\frac{\beta q}{2-q}\,,
  \end{align} 
  so that, due to $\bvn\in L^{\beta}(0,T;C^{0,\alpha}(\overline{\rplus}))$, we have that
  $(t\mapsto [\bvn(t)]_{\dot{C}^{0,\alpha}(\overline{\rplus})}^{\frac{2}{q}-1})\in L^{s'}(0,T)$.
  Note that the restriction $q\in (\frac{4}{2+\beta},2)$ ensures
  that $s \in (1,2)$.
  Then, by Corollary~\ref{cor:alternative_formulation}, we have that $\frac{\mathrm{d}\bvn}{\mathrm{d}t}\in L^s(0,T;(L^r(\rplus))^3)$,
  where 
  \begin{align}\label{def:r}
  	r=\frac{4s}{4s+\beta s-2\beta}=\frac{4q}{4+2q-q \beta}\,,
  \end{align}
  where we used \eqref{def:s}. Note that $q\in
  (\frac{4}{2+\beta},2) $  and $\beta <2$ guarantee that $r\in (1,q)$.
  
  Hence, 
  using H\"older's inequality, standard convex interpolation, adding
  and subtracting $\bvn(\tau)$ in the second term in the second line,
  Lemma~\ref{lem:convolution-Holder0}\eqref{eq:conv20},
  Lemma~\ref{lem:convolution-Holder}\eqref{eq:conv2p}, and standard
  properties of the convolution operator (cf.~\cite[Sec.~A.3]{RRS16}) and the translation operator,
  as well as the
  energy inequality~\eqref{def:leray_hopf.2}, for a.e.\ $\tau \in (0,T)$ and every $\vep>0$, we
  obtain 
\begin{align}\label{prop:timed.1}
 \begin{aligned}
    &\int_{\rplus}{\bigg\vert\bigg(\frac{\mathrm{d}\bvn}{\mathrm{d}t}(\tau)\bigg)_{\vep}\bigg\vert\vert
     S_{\vep}(\bvn)(\tau)-\bvn_{\vep}(\tau) \vert\,\mathrm{d}x}
    \\&\leq\bigg\|\bigg(
     \frac{\mathrm{d}\bvn}{\mathrm{d}t}(\tau)\bigg)_{\vep}\bigg\|_{L^q(\rplus)}   
    \| S_{\vep}(\bvn)(\tau)-\bvn_{\vep}(\tau)
      \|_{L^\infty(\rplus)}^{\frac{2}{q}-1}
\|S_{\vep}(\bvn)(\tau)-\bvn_{\vep}(\tau)
      \|_{L^2(\rplus)}^{\frac{2}{q'}}
    \\
    &\leq c\,\vep ^{-3(\frac{1}{r}-\frac{1}{q})}\,
      \bigg\|\frac{\mathrm{d}\bvn}{\mathrm{d}t}(\tau)\bigg\|_{L^{r}(\rplus)}
      \smash{[\bvn(\tau)]_{\dot{C}^{0,\alpha}(\overline{\rplus})}^{\frac{2}{q}-1}}
     \vep^{\alpha(\frac{2}{q}-1)}\|\bvn_0\|_{L^2(\rplus)}^{\frac{2}{q'}}\,.
      \end{aligned}
  \end{align}
  Then, integrating \eqref{prop:timed.1} with respect to $\tau\in (0,T)$ and applying 
  H\"older's inequality, for every $\vep>0$, we find that 
  \begin{align}\label{prop:timed.2}
   \begin{aligned}
   	&\int_{0}^{T}  {\int_{\rplus}{\bigg\vert\bigg(
   			\frac{\mathrm{d}\bvn}{\mathrm{d}t}\bigg)_{\vep}\bigg\vert\vert
   			S_{\vep}(\bvn)-\bvn_{\vep} \vert\,\mathrm{d}x}\, \de\tau}   
   	\\&\leq c \,\frac{\vep^{\alpha(\frac{2}{q}-1)}}{\vep^{3(\frac{1}{r}-\frac{1}{q})}}
   	\bigg\|\frac{\mathrm{d}\bvn}{\mathrm{d}t}\bigg\|_{L^s(0,T;L^r(\rplus))}
   	\|\bvn\|_{L^{\beta}(0,T;C^{0,\alpha}(\overline{\rplus}))}^{\frac{2}{q}-1}\,.
   \end{aligned}
  \end{align}  
In order to guarantee that the right-hand side in \eqref{prop:timed.2} vanishes if we pass to the limit~for~$\vep\to 0^{+}$,
    it is sufficient that
    \begin{align}\label{eq:est-eps}
    \alpha\left(\frac{2}{q}-1\right)> 3\left(\frac{1}{r}-\frac{1}{q}\right)=3\left(\frac{2-\beta}{4}\right)\,,
    \end{align}
    where we used \eqref{def:r}. 
	Since $(q\mapsto \frac{2}{q})\colon (1,2) \to (1,2)$ is decreasing, the less restrictive case (larger
        $\beta$) is the case when $q$ is close to the infimum $\frac{4}{2+\beta}$. Since, in this case, we have that
    \begin{align*}
        \alpha\bigg(\frac{2}{q}-1\bigg)_{\smash{|q=\frac{4}{2+\beta}}}=\frac{\alpha 
          \beta}{2}\,,
    \end{align*}
     then the inequality~\eqref{eq:est-eps} turns out to be satisfied for
	\begin{equation*}
          \beta>\frac{6}{3+2\alpha}.
        \end{equation*}
	The choice of $r,q$ such that $1<\frac{4}{2+\beta}<r<q<\frac{4}{4-\beta}$
	%
	%
        is always possible for $\beta \in (1,2)$, since
	$\frac{4}{2+\beta}<\frac{4}{4-\beta}$.\enlargethispage{7.5mm}
 
    Eventually, due to inequality~\eqref{eq:est-eps}, from \eqref{prop:timed.2}, it follows that
        \begin{equation*}
           \int_{0}^{T}  {\int_{\rplus}{\bigg\vert
              \bigg(
      \frac{\mathrm{d}\bvn}{\mathrm{d}t}\bigg)_{\vep}\bigg\vert\,\vert
               S_{\vep}(\bvn)-\bvn_{\vep} \vert\,\mathrm{d}x}\, \de\tau}\to0\quad(\vep\to0^{+})\,,
	\end{equation*}
 which concludes the proof.
\end{proof}
 
We can now prove the main result of the paper.
\begin{proof}[Proof (of Theorem~\ref{thm:main}).] We study the behavior as $\vep\to0^{+}$
  of the terms in~\eqref{eq:main.1}:
  
  \medskip
  \noindent\textit{1.\ Convergence of family of integrals involving the
    time derivative:}~\mbox{Proposition~\ref{prop:derivate_eps}} yields that $\bvn_{\vep}\in W^{1,s}(I;(L^r(\R^3))^3)$ for all $\vep>0$. Since $\bvn\in L^\infty(I;L^2_{0,\sigma}(\rplus))$, we have that
    $\bvn_\vep\in L^\infty(I;(W^{2,2}(\R^3))^3)$ for all $\vep>0$, which, by a Sobolev embedding  (\textit{cf}.\ \cite[Thm.\ 4.12, Part I, Case A]{AdamsFournier03}), implies that $\bvn_\vep\in L^\infty(I;(L^\infty(\R^3))^3)$ for all $\vep>0$. Therefore, by real interpolation, we find that $\bvn_\vep\in L^\infty(I;(L^p(\R^3))^3)$ for all $p\in [2,\infty]$ and for all $\vep>0$. 
    In summary, we arrive at  $\bvn_\vep\in W^{1,s',s}_{\textup{id}_{\smash{L^{\smash{r'}}}}}(I; (L^{r'}(\rplus))^3, (L^r(\rplus))^3)$ for all $\vep>0$,~so that, resorting to Proposition \ref{prop:non_sym_pi}(ii) (in the case $X=(L^{r'}(\rplus))^3$, $Y=(L^2(\rplus))^3$, $i=\textup{id}_{\smash{L^{\smash{r'}}}}$, $p=s'$, and $q=s$), for every $t\in [0,T]$, we find that
    \begin{align}\label{eq:main.0.-1}
    \int_0^{t}{\int_{\rplus}{\frac{\mathrm{d}\bvn_{\vep}}{\mathrm{d}t}\cdot
    		\bvn_{\vep}\,\mathrm{d}x}\,\de\tau}=\frac{1}{2}\|\bvn_{\vep}(t)\|_{L^2(\rplus)}^2-\frac{1}{2}\|\bvn_{\vep}(0)\|_{L^2(\rplus)}^2\,.
    \end{align}
    
    Then, for every~${t\hspace{-0.1em}\in \hspace{-0.1em}[0,T]}$, using Proposition \ref{prop:derivate_eps}(ii) and \eqref{eq:main.0.-1}, we infer that
  \begin{align}\label{eq:main.0.0}
    \begin{aligned}
      &\int_0^{t}{\int_{\rplus}{\bigg(
      \frac{\mathrm{d}\bvn}{\mathrm{d}t}\bigg)_{\vep}\cdot
           S_{\vep}(\bvn)\,\mathrm{d}x}\,\de\tau}
      \\
      &=
      \int_0^{t}{\int_{\rplus}{\frac{\mathrm{d}\bvn_{\vep}}{\mathrm{d}t}\cdot
          \bvn_{\vep}\,\mathrm{d}x}\,\de\tau}
      +
      \int_0^{t}{\int_{\rplus}{\bigg(
      \frac{\mathrm{d}\bvn}{\mathrm{d}t}\bigg)_{\vep}\cdot
          ( S_{\vep}(\bvn)-\bvn_{\vep})\,\mathrm{d}x}\,\de\tau}
      \\
      &=\bigg[\frac{1}{2}\|\bvn_{\vep}(t)\|_{L^2(\rplus)}^2-\frac{1}{2}\|\bvn_{\vep}(0)\|_{L^2(\rplus)}^2\bigg]
      \\
      &\quad
      +\int_0^{t}{\int_{\rplus}{\bigg(
      \frac{\mathrm{d}\bvn}{\mathrm{d}t}\bigg)_{\vep}\cdot
          ( S_{\vep}(\bvn)-\bvn_{\vep})\,\mathrm{d}x}\,\de\tau}
      \\
      &\eqqcolon I_{\vep} ^1(t)+I_{\vep} ^2(t)\,.
    \end{aligned}
  \end{align}
  Thus, we need to estimate $I_{\vep} ^1(t)$ and $I_{\vep} ^2(t)$: %

\textit{ad $i=1$.}
  Using that $\bvn\in C^0_w([0,T];L^2_{0,\sigma}(\rplus))$ (\textit{cf}.\ Definition \ref{def:leray_hopf}) and  approximation properties of standard mollification, for every $t\in [0,T]$, we find that
  \begin{align}\label{eq:main.0.1}
I_{\vep} ^1(t)\to
    \frac{1}{2}\|\bvn(t)\|_{L^2(\rplus)}^2-\frac{1}{2}\|\bvn(0)\|_{L^2(\rplus)}^2
    \quad (\vep \to 0^{+})
    \,.
	\end{align}

	\textit{ad $i=2$.}
        By Proposition~\ref{prop:timed}, for every $t\in [0,T]$,
        we have that
        \begin{align}\label{eq:main.0.2}
            |I_{\vep}^{2}(t)|\leq |I_{\vep}^{2}(T)|\to0\quad (\vep \to 0^{+})\,.
        \end{align} 
    As a result, using \eqref{eq:main.0.1} and \eqref{eq:main.0.2} in \eqref{eq:main.0.0}, for every $t\in [0,T]$, we arrive at
    \begin{align}\label{eq:main.0.3}
    \begin{aligned}
      \int_0^{t}{\int_{\rplus}{\bigg(\frac{\mathrm{d}\bvn}{\mathrm{d}t}\bigg)_{\vep}\cdot
           S_{\vep}(\bvn)\,\mathrm{d}x}\,\de\tau}\to  \frac{1}{2}\|\bvn(t)\|_{L^2(\rplus)}^2-\frac{1}{2}\|\bvn(0)\|_{L^2(\rplus)}^2\quad (\vep \to 0^{+})\,.
    \end{aligned}
  \end{align}
    
        \begin{remark}
          Note that the condition $\beta>\frac{6}{3+2\alpha}$ is required to estimate this
          term, which  does not involve the non-linear convective term. The convective term
          requires the same weaker assumptions as in the space periodic case, that is
          $\beta\geq\frac{2}{1+\alpha}$,~\textit{cf}.~\eqref{eq:holder-periodic}.
        \end{remark}
        \medskip
        
        \noindent\textit{2. Convergence of family of integrals arising from
          the viscous term:} Recalling that $\bvn\in L^2(0,T;W^{1,2}_{0,\sigma}(\rplus))$ (\textit{cf}.\ Definition \ref{def:leray_hopf}), appealing to
        Proposition~\ref{prop:basic_properties}(iii)~\&~(iv), it holds that
        $\| S_{\vep}(\bvn)(t) \|_{\smash{W^{1,2}_{0,\sigma}(\rplus)}}\leq c_2
        \,\|\bvn(t)\|_{\smash{W^{1,2}_{0,\sigma}(\rplus)}}$~and 
	$  S_{\vep}(\bvn)(t) \to \bvn (t)$ in
        $W^{1,2}_{0,\sigma}(\rplus)$ $(\vep \to 0^{+})$ for a.e.\
        $t\in (0,T)$. Hence,~by~Lebesgue's~dominated convergence
        theorem for Bochner--Lebesgue functions, we obtain\enlargethispage{5mm}
        \begin{align}
          \label{eq:main.2}
           S_{\vep}(\bvn)\to \bvn \quad \text{ in }L^2(0,T;W^{1,2}_{0,\sigma}(\rplus))\quad(\vep  \to 0^{+})\,. 
	\end{align}
        On the other hand, by standard approximation properties of convolution,~it~holds~that
        $\|\bvn_{\vep}(t) \|_{\smash{W^{1,2}(\R^3)}}\leq \|\overline{\bvn(t)}\|_{\smash{W^{1,2}(\R^3)}}$ and
        $ \bvn_{\vep}(t) \to \overline{\bvn (t)}$ in $(W^{1,2}_0(\R^3))^3$ $(\vep \to 0^{+})$~for~a.e.\ $t\in (0,T)$. Hence, by Lebesgue's dominated convergence
        theorem for Bochner--Lebesgue functions, we obtain
	\begin{align}
          \label{eq:main.3}
		\bvn_{\vep}\to \overline{\bvn} \quad \text{ in
          }L^2(0,T;(W^{1,2}_0(\mathbbm{R}^3))^3)\quad(\vep  \to 0^{+})\,.
	\end{align}
	As a result, using~\eqref{eq:main.2} and~\eqref{eq:main.3},
        for every $t\in [0, T]$, we conclude that
        \begin{align}
          \label{eq:main.4}
          \nu 	\int_{0}^{t}{\int_{\rplus}{\nabla \bvn_{\vep}:\nabla
           S_{\vep}(\bvn)\,\mathrm{d}x}\,\de\tau}\to \nu
          \int_{0}^{t}{\|\nabla \bvn\|_{L^2(\rplus)}^2\,\de\tau}\quad(\vep
          \to 0^{+})\,. 
	\end{align} 
	 
    \begin{remark}
        Note that for the convergence \eqref{eq:main.4}, we used only the regularity known for all Leray--Hopf (weak)
        solutions (in the sense of Definition \ref{def:leray_hopf}).
    \end{remark}

        \medskip
        \noindent \textit{3. Convergence of  family of integrals arising from
          the convective term:} Recalling that $\bvn\in L^2(0,T;W^{1,2}_{0,\sigma}(\rplus))\cap L^\infty(0,T;L^2_{0,\sigma}(\rplus))$ (\textit{cf}.\ Definition \ref{def:leray_hopf}), appealing to
        Proposition~\ref{prop:basic_properties}(iii) \& (iv), for
        a.e.\ $\tau\in (0,T)$, it holds that
	\begin{align}
          \label{eq:main.5}
           S_{\vep}(\bvn)(\tau) \to \bvn (\tau)\quad \text{ in }W^{1,2}_{0,\sigma}(\rplus)\quad
          (\vep  \to 0^{+})\,.
	\end{align}
        On the other hand, resorting to approximation properties of
        standard mollification, for a.e.\ $\tau\in (0,T)$ and every $q\in[1,3]$, it holds that
        \begin{align}
          \label{eq:main.6}
		(\bvn(\tau)\otimes\bvn(\tau))_{\vep}\to
          \overline{\bvn(\tau)\otimes\bvn(\tau)} \quad \text{ in
          }
          (L^{q}(\mathbbm{R}^3))^{3\times 3}\quad(\vep  \to 0^{+})\,.
        \end{align}
	As a result, using~\eqref{eq:main.5} and~\eqref{eq:main.6}, for a.e.\ $\tau\in (0,
        T)$, we conclude that
        \begin{align}\label{eq:main.7}
		\left.\begin{aligned}
                & \int_{\rplus}{(\bvn(\tau)\otimes \bvn(\tau))_{\vep}
                      :\nabla S_{\vep}(\bvn)(\tau) \,\mathrm{d}x}\\&\quad \to
                    \int_{\rplus}{\bvn(\tau)\otimes \bvn(\tau):\nabla\bvn
                      (\tau)\,\mathrm{d}x}
                         =0 
		\end{aligned}\quad\right\}\quad(\vep  \to 0^{+})\,.
	\end{align}
	To be in the position to apply Lebesgue's convergence
        theorem, it is left to~find~an $L^1(0,T)$-integrable majorant for the
        left-hand side in~\eqref{eq:main.7}.  To this end, we resort~to the Constantin--E--Titi commutator-type identity (\textit{cf}.\ \cite{CET1994}), which
        states that~for~a.e.~${\tau\in (0,T)}$ and every $\vep>0$, it holds that
        \begin{align}
         \label{eq:main.8}
		\begin{aligned}
                  (\bvn(\tau)\otimes \bvn(\tau))_{\vep} &=\bvn_{\vep}(\tau)
                  \otimes \bvn_{\vep}(\tau) \\&\quad+r_{\vep} (\bvn(\tau),\bvn(\tau))
                  \\
                  &
                  \quad-\big(\bvn(\tau)-\bvn_{\vep}(\tau)\big) \otimes
                  \big(\bvn(\tau)-\bvn_{\vep}(\tau)\big) \,,
		\end{aligned}
	\end{align}
	where for a.e.\ $(\tau,x)^\top\in (0,T)\times \mathbbm{R}^d$ and every $\vep>0$, we employed the abbreviations
	%
\begin{align*}
          \left\{
          \begin{aligned}
                  r_{\vep} (\mathbf{v},\mathbf{v})(\tau,x)&\coloneqq
                  \int_{\mathbbm{R}^3}{\rho_{\vep
                    }(y)(\delta_y\mathbf{v}
                    )(\tau,x)\otimes(\delta_y\mathbf{v})
                    (\tau,x)\,\mathrm{d}y}\,,
                  \\
                  \delta_y\mathbf{v} (\tau,x)&\coloneqq
                  \overline{\mathbf{v}}(\tau,x-y)-\overline{\mathbf{v}}(\tau,x)\quad\text{ for a.e.\
                  }y\in \rplus \,,
		\end{aligned}
                                          \right.
        \end{align*}
	Using the Constantin--E--Titi commutator-type identity \eqref{eq:main.8}, the left-hand~side~in~\eqref{eq:main.7}, for a.e.\ $\tau\in (0,T)$ and every $\vep>0$, can be re-written as
        \begin{align}
          \label{eq:main.10}
		\hspace*{-5mm}\begin{aligned}
                  &\int_{\rplus}{(\bvn(\tau)\otimes \bvn(\tau))_{\vep}
                    :\nabla S_{\vep}(\bvn)(\tau)\,\mathrm{d}x}
                  \\
                  &=\int_{\rplus}{\bvn_{\vep}(\tau)
                    \otimes \bvn_{\vep}(\tau):\nabla S_{\vep}(\bvn)(\tau) \,\mathrm{d}x}
                  \\&\quad+\int_{\rplus}{r_{\vep}
                    (\bvn(\tau),\bvn(\tau)):\nabla S_{\vep}(\bvn)(\tau)\,\mathrm{d}x}
                  \\
                  &\quad-\int_{\rplus}\Big\{\big(\bvn(\tau)-\bvn_{\vep}(\tau)\big)\otimes
                    \big(\bvn(\tau)-\bvn_{\vep}(\tau) \big):\nabla S_{\vep}(\bvn)(\tau)\Big\}\,\mathrm{d}x
                  \\
                  &\eqqcolon J^{1}_{\vep }(\tau)+J_{\vep}^{2}(\tau)+J_{\vep}^{3}(\tau)\,.
		\end{aligned}
	\end{align}
        %
	So, we need to establish that $| J^{i}_{\vep }(\tau)|$, $i=1,2,3$, are for a.e.\
        $\tau\in (0,T)$ is bounded from above by an $L^1(0,T)$-integrable function:

        \textit{ad  $ i=1$.} Using that
        $\int_{\rplus}{S_{\vep}(\bvn)(\tau) \otimes\bvn_{\vep}(\tau) :\nabla S_{\vep}(\bvn)(\tau)
          \,\mathrm{d}x}=0$ for a.e.\ $\tau\in (0,T)$, due to the
        properties of the convolution-translation operator
        (cf.~Proposition \ref{prop:basic_properties}), adding
        and subtracting $\bvn$,  the properties of the
        convolution operator and the translation-convolution operator
        (cf.~Lemma~\ref{lem:convolution-Holder0}\eqref{eq:conv20},
  Lemma~\ref{lem:convolution-Holder}\eqref{eq:conv2p}, \eqref{eq:conv3p}),  H\"older inequality with the
        exponents $(2,\frac{2}{\alpha},\frac{2}{1-\alpha})$, and again
        the properties of the
        convolution operator and the translation-convolution operator
        (cf.~\cite[Sec.~A.3]{RRS16}, 
  Proposition~\ref{prop:basic_properties}\eqref{prop:basic_properties.3},
  \eqref{prop:basic_properties.4}) together with \eqref{def:leray_hopf.2}, for a.e.\ $\tau\in (0,T)$, we find that
	\begin{align}\label{eq:main.11}
		\begin{aligned}
                  \hspace*{-6mm}
                  \vert J^{1}_{\vep
                  }(\tau)\vert&
                  =\bigg\vert-\int_{\rplus}{( S_{\vep}(\bvn)(\tau)-\bvn_{\vep}(\tau)
                    ) \otimes\bvn_{\vep}(\tau) :\nabla S_{\vep}(\bvn)(\tau) \,\mathrm{d}x}\bigg\vert
                  \\
                  &\leq \int_{\rplus}\Big\{\vert \bvn_{\vep}(\tau) \vert
                  \,\vert  S_{\vep}(\bvn)(\tau)-\bvn_{\vep}(\tau)
                  \vert^{1-\alpha}\vert  S_{\vep}(\bvn)(\tau)-\bvn_{\vep}(\tau) \vert^{\alpha}
                  \\
                  &\qquad\quad\times\vert \nabla S_{\vep}(\bvn)(\tau)
                  \vert^{1-\alpha}\vert \nabla S_{\vep}(\bvn)(\tau)
                  \vert^{\alpha}\Big\} \,\mathrm{d}x
                  \\
                  &\leq
                  c\,[\bvn(\tau)]_{\dot{C}^{0,\alpha}(\overline{\rplus})} ^{\alpha+1-\alpha}\vep^{\alpha(1-\alpha)+\alpha(\alpha-1)}
                  \\
                  &\quad\times\int_{\rplus}{\vert \bvn_{\vep}(\tau)
                    \vert\,\vert  S_{\vep}(\bvn)(\tau) -\bvn_{\vep}(\tau)
                    \vert^{\alpha} \vert \nabla S_{\vep}(\bvn)(\tau)
                    \vert^{1-\alpha} \,\mathrm{d}x}
                  \\
                  &\leq c\,[\bvn(\tau)]_{\dot{C}^{0,\alpha}(\overline{\rplus})} 
                  \\
                  &\quad\times
                  \,\|\bvn_{\vep}(\tau) \|_{L^2(\rplus)}
                  \| S_{\vep}(\bvn)(\tau) -\bvn_{\vep}(\tau)
                  \|_{L^2(\rplus)}^{\alpha} \|\nabla S_{\vep}(\bvn)(\tau)
                  \|_{L^2(\rplus)}^{1-\alpha}
                  \\
                  &\leq
                  c\,[\bvn(\tau)]_{\dot{C}^{0,\alpha}(\overline{\rplus})} \|\bvn_0\|_{L^2(\rplus)}^{1+\alpha}
                  \|\nabla\bvn(\tau)\|_{L^2(\rplus)}^{1-\alpha} \,.
		\end{aligned}\hspace*{-15mm}
	\end{align}

 
	\textit{ad $i=2$.} Using Fubini-Tonelli theorems to exchange the order of integrals, the definition of H\"older continuity and the property~\eqref{eq:conv3p} of the operator of convolution-translation, for a.e.\
        $\tau\in (0,T)$, we obtain
        \begin{align*}
            \begin{aligned}
            \vert J_{\vep}^{2} (\tau)\vert &\leq
       		\int_{\rplus}\int_{B_{\vep}^d(0)}\rho_{\vep}(y){\vert
       			\delta_y(\bvn(\tau))\vert^2\, \vert\nabla
       			S_{\vep}(\bvn)(\tau) \vert \,\mathrm{d}y\,\mathrm{d}x}
          \\
          &\leq \int_{B_{\vep}^d(0)}\int_{\rplus}\Big\{\rho_{\vep}(y){\vert
       			\delta_y(\bvn(\tau))\vert^{1+\alpha}\vert
       			\delta_y(\bvn(\tau))\vert^{1-\alpha}}\\&\qquad\qquad\qquad\times\vert\nabla  S_{\vep}(\bvn)(\tau)\vert^{\alpha}
       			\vert\nabla  S_{\vep}(\bvn)(\tau)\vert^{1-\alpha}\Big\}\,\de x\,\mathrm{d}y
          \\
          &\leq c_{\rho}^{\alpha}[\bvn(\tau)]_{0,\alpha}^{\alpha}\vep^{\alpha(\alpha-1)}\\&\quad\times\int_{B_{\vep}^d(0)}\rho_{\vep}(y)|y|^{\alpha(1-\alpha)}{\int_{\rplus}\vert
       			\delta_y(\bvn(\tau))\vert^{1+\alpha}
       			\vert\nabla  S_{\vep}(\bvn)(\tau)\vert^{1-\alpha}\,\de x\,\mathrm{d}y},
            \end{aligned}
        \end{align*}
        Next, observing that $|y|<\vep$, 
        using H\"older inequality with respect to
        $\dx$, and the  translation invariance of the Lebesgue measure for arbitrary $y\in\R^3$ we get
        \begin{align}
        \label{eq:main.12}
            \begin{aligned}
                \vert J_{\vep}^{2} (\tau)\vert
                &
                \leq c_{\rho}^{\alpha} 2^{\alpha}[\bvn(\tau)]_{0,\alpha}^{\alpha}
                \|\nabla  S_{\vep}(\bvn)(\tau)\|_{L^2(\rplus)}^{1-\alpha}
                \\
                &\quad\times\int_{B_{\vep}^d(0)}\rho_{\vep}(y)\big(\|\bvn(\tau,\cdot-y)\|_{L^2(\rplus)}^{1+\alpha}+\|\bvn(\tau)\|_{L^2(\rplus)}^{1+\alpha}\big)
       			    \,\mathrm{d}y,
              \\
              &\leq c_{\rho}^{\alpha} 2^{\alpha+1}[\bvn(\tau)]_{0,\alpha}^{\alpha}
                \|\nabla  S_{\vep}(\bvn)(\tau)\|_{L^2(\rplus)}^{1-\alpha}\\&\quad\times \|\bvn(\tau)\|_{L^2(\rplus)}^{1+\alpha}\int_{B_{\vep}^d(0)}\rho_{\vep}(y)
       			    \,\mathrm{d}y
              \\
              &\leq c_{\rho}^{\alpha} 2^{\alpha+1}[\bvn(\tau)]_{0,\alpha}^{\alpha}
                \|\nabla \bvn(\tau)\|_{L^2(\rplus)}^{1-\alpha} \|\bvn_0\|_{L^2(\rplus)}^{1+\alpha},
            \end{aligned}
        \end{align}
        where in the last step we used also~\eqref{prop:basic_properties.4} and the energy inequality.

	\textit{ad $i=3$.}  Using the properties of the
        convolution operator and the translation-convolution operator
        (cf.~Lemma~\ref{lem:convolution-Holder0}\eqref{eq:conv20},
  Lemma~\ref{lem:convolution-Holder}\eqref{eq:conv3p}),  H\"older inequality with the
        exponents $(\frac{2}{1+\alpha},\frac{2}{1-\alpha})$, and again
        the properties of the
        convolution operator and the translation-convolution operator
        (cf.~\cite[Sec.~A.3]{RRS16}, 
  Proposition~\ref{prop:basic_properties}\eqref{prop:basic_properties.4}) together with
  \eqref{def:leray_hopf.2}, for a.e.\ $\tau\in (0,T)$, we find that
	%
        \begin{align}
          \label{eq:main.13}
          \begin{aligned}
            \vert J_{\vep}^{3}(\tau)\vert&\leq \int_{\rplus}{\Big\{ \vert
              \bvn(\tau)-\bvn_{\vep}(\tau) \vert^{1+\alpha} \vert
              \bvn(\tau)-\bvn_{\vep}(\tau) \vert^{1-\alpha}}
              \\&\qquad\quad\times {\vert
              \nabla S_{\vep}(\bvn)(\tau)\vert^{\alpha} \vert
              \nabla S_{\vep}(\bvn)(\tau)\vert^{1-\alpha}\Big\}\,\mathrm{d}x}
            \\
            &\leq [\bvn(\tau)]_{\dot{C}^{0,\alpha}(\overline{\rplus})} \int_{\rplus}{ \vert \bvn(\tau)
              -\bvn_{\vep}(\tau) \vert^{1+\alpha} \vert
              \nabla S_{\vep}(\bvn)(\tau)\vert^{1-\alpha} \,\mathrm{d}x}
            \\&\leq
            c_{\rho}^{\alpha}\,[\bvn(\tau)]_{\dot{C}^{0,\alpha}(\overline{\rplus})} \|\bvn_0\|_{L^2(\rplus)}^{1+\alpha}
            \|\nabla\bvn(\tau)\|_{L^2(\rplus)}^{1-\alpha} \,.
		\end{aligned}
	\end{align}

	Putting everything together, using Lebesgue's dominated convergence theorem in
        conjunction with~\eqref{eq:main.7} and~\eqref{eq:main.10} together
        with~\eqref{eq:main.11}--\eqref{eq:main.13}, for every $t\in [0, T]$, we conclude
        that
	\begin{align}\label{eq:main.14}
         \int_{0}^{t}{\int_{\rplus}{(\bvn\otimes
          \bvn)_{\vep} :\nabla S_\vep(\bvn)\,\mathrm{d}x}\,\de\tau}\to 0\quad (\vep\to0^{+})\,. 
	\end{align}
    Finally, combining \eqref{eq:main.0.3}, \eqref{eq:main.4}, and \eqref{eq:main.14}, from \eqref{eq:main.1}, for every $t\in [0,T]$,~it~follows~that
    \begin{align*}
          \frac{1}{2}\|\bvn(t)\|_{L^2(\rplus)}^2-\frac{1}{2}\|\bvn(0)\|_{L^2(\rplus)}^2+\nu\int_0^{t}\|\nabla\bvn(\tau)\|^2_{L^2(\rplus)}\, \de \tau=0\,,
    \end{align*}
    which is the claimed energy conservation.
\end{proof}
%
\bigskip

\appendix
\section{Basic properties of the smoothing by convolution-translation}
\label{sec:appendix}
In this section, we prove
Propositions~\ref{prop:basic_properties}--\ref{prop:admissibility},   the main
results on the modified smoothing operator obtained with the convolution-translation~from~Definition~\ref{def:uepsilonprime}.
\begin{proof}[Proof (of Proposition~\ref{prop:basic_properties}).]
  \textit{ad (i).} The smoothness $S_{\vep}(\bun) \in (C^\infty(\R^d))^{\ell}$~for~all~$\vep >0$
  is a consequence of the mollification. In addition, for every $\vep >0$, we have that
  \begin{align*}
    \textup{supp}\,S_{\vep}(\bun)&=\textup{supp}\,\tau_{2\vep}\overline{\bun}+\overline{B_{\vep}^{d}(0)} 
    \\
                               &=\textup{supp}\,\bun+2\vep\,\mathrm{e}_d+\overline{B_{\vep} ^{d}(0)}
    \\
                               &=\rplusd +2\vep\,\mathrm{e}_d+\overline{B_{\vep} ^{d}(0)}
    \\
                               &\subseteq \rplusd +\vep  \mathrm{e}_d\subseteq \rplusd\,.
  \end{align*}
  
  \textit{ad (ii).} By the chain rule and the commutator properties of the
  convolution operator, for every $\vep >0$, it holds that
  \begin{align}
    \nabla S_{\vep}(\bun) =	\rho_{\vep}\ast\tau_{2\vep}\overline{\nabla\bun}\quad \text{ in
    }\R^d\,,\label{eq:basic_properties.1} 
  \end{align}
  so that $\mathrm{div}\,S_{\vep}(\bun)=\rho_{\vep}\ast\tau_{2\vep}\overline{\mathrm{div}\,\bun}=0$ in   $\R^d$.
  
  \textit{ad (iii).}  
  
  \textit{ad \eqref{prop:basic_properties.1}.} For every $\vep >0$ and a.e.\
  $x\in \rplusd$, we obtain
  \begin{align}
    \label{eq:basic_properties.3}
    \begin{aligned}
      \vert (\rho_{\vep}\ast\tau_{2\vep}
      \overline{\bun})(x)\vert&\leq\int_{B_{\vep}^{d}(x)}{\rho_{\vep}(y-x+2\vep\,\mathrm{e}_d)
        \vert\overline{\bun}(y)\vert\,\mathrm{d}y}
      \\
      &\leq
      \|\rho\|_{L^\infty(\rplus)}\vert B_1^d(0)\vert\fint_{B_{\vep}^{d}(x)}{\vert\overline{\bun}(y)\vert\,\mathrm{d}y}
      \\
      &\leq c_d\,M(\vert\overline{\bun}\vert)(x)\,.
    \end{aligned}
  \end{align}
  
  \textit{ad \eqref{prop:basic_properties.2}.} Using~\eqref{eq:basic_properties.1}, similar
  to~\eqref{eq:basic_properties.3}, for every $\vep >0$ and a.e.\
  $x\in \rplusd $, we conclude that~\eqref{prop:basic_properties.2}
  holds true.

\textit{ad \eqref{prop:basic_properties.2.1}.}
  For every $\vep >0$ and a.e.\
  $x\in \rplusd $, we have that
		\begin{align}\label{eq:basic_properties.4}
\hspace*{-3mm}			\begin{aligned}
				&\vert  (\rho_{\vep}\ast\tau_{2\vep}
      \overline{\bun})(x)-\mathbf{u}(x)\vert\\&=
				\bigg\vert \int_{B_\vep^{d}(x)}{\rho_\vep(x-y)(\overline{\mathbf{u}}(y-2\vep \,\mathrm{e}_d)-\mathbf{u}(x))\,\mathrm{d}y}\bigg\vert 
				\\&=
				\bigg\vert \int_{B_\vep^{d}(x)}{\rho_\vep(x-y)\bigg(\int_0^1{\nabla\overline{\mathbf{u}}(\lambda\,(y-2\vep \,\mathrm{e}_d)+(1-\lambda)\,x): (y-x-2\vep \,\mathrm{e}_d)\,\mathrm{d}\lambda}\bigg)\,\mathrm{d}y}\bigg\vert
				\\&\leq 
				3\,\varepsilon\,\int_0^1{\bigg(\int_{B_{\varepsilon}^{d}(x)}{\rho_{\varepsilon}(x-y)\vert \nabla\overline{\mathbf{u}}(\lambda\,(y-2\vep \,\mathrm{e}_d)+(1-\lambda)\,x)\vert\,\mathrm{d}y}\bigg)\,\mathrm{d}\lambda}
				\\&\leq
				3\,\varepsilon\,\int_0^1{\bigg(\int_{B_{\lambda\varepsilon}^{d}(x-\lambda\,2\vep\,\mathrm{e}_d))}{\rho_{\varepsilon}\Big(\frac{x-y}{\lambda}-2\vep\,\mathrm{e}_d\Big)\vert \nabla\overline{\mathbf{u}}(y)\vert\lambda^{-d}\,\mathrm{d}y}\bigg)\,\mathrm{d}\lambda}
    \\&\leq
				\|\rho\|_{L^\infty(\mathbb{R}^d)}\,3^{d+1}\,\vert B_1^d(0)\vert\,\varepsilon\,
    \fint_{B_{3\lambda\varepsilon}^{d}(  x)}{\vert \nabla\overline{\mathbf{u}}(y)\vert\,\mathrm{d}y} 
				\\&\leq c_d\,
				\|\rho\|_{L^\infty(\mathbb{R}^d)}\,3^{d+1}\,\varepsilon\,M(\vert\nabla\overline{\mathbf{u}}\vert)(x)\,.
			\end{aligned}\hspace*{-15mm}
		\end{align}
		
  \textit{ad (iv).} Follows from (iii) and the $L^r$-$L^r$-stability of the
  Hardy--Littlewood~maximal operator
  $M\colon L^r(\mathbbm{R}^{d})\to L^r(\mathbbm{R}^{d})$.
  
  \textit{ad (v).} Let
  $(\overline{\bun}_n)_{n\in \mathbb{N}}\hspace{-0.1em}\subseteq \hspace{-0.1em}(C^\infty_0(\mathbbm{R}^{d}))^{\ell}$ be such that
  $\overline{\bun}_n\hspace{-0.1em}\to\hspace{-0.1em} \overline{\bun}$ in $(W^{1,r}_0(\mathbbm{R}^{d}))^{\ell}$ $(n\hspace{-0.1em}\to\hspace{-0.1em} \infty)$, \textit{i.e.},
  for every $\delta>0$, there exists $n_0\in \mathbb{N}$ such that
  $\|\overline{\bun}_n-\overline{\bun}\|_{W^{1,r}(\mathbbm{R}^{d})}\leq \delta$ for all
  $n\in \mathbb{N}$ with $n\ge n_0$.  Using ~\eqref{eq:basic_properties.3} together with
  the $L^r$-$L^r$-stability of the Hardy--Littlewood maximal operator
  $M\colon L^r(\mathbbm{R}^{d})\to L^r(\mathbbm{R}^{d})$,
  and~\eqref{eq:basic_properties.1}, for every $n\in \mathbb{N}$ with $n\ge n_0$, $\vep>0$, and a.e.\ $x\in \rplus$,  
  we conclude that
  \begin{align*}
		\begin{aligned}
                  \|\nabla S_{\vep}(\bun) -\nabla\bun \|_{L^r(\rplusd)} &=
                  \|\rho_{\vep}\ast \tau_{2\vep
                  }\nabla\overline{\bun}-\nabla\bun\|_{L^r(\rplusd)}
                  \\
                  &\leq \|\rho_{\vep}\ast
                  \tau_{2\vep}(\nabla\overline{\bun}_n-\nabla\overline{\bun})\|_{L^r(\rplusd)} 
                  \\
                  &\quad+\|\rho_{\vep}\ast \tau_{2\vep
                  }\nabla\overline{\bun}_n-\nabla\overline{\bun}_n\|_{L^r(\rplusd)}
                  \\
                  &\quad+\|\nabla\overline{\bun}_n-\nabla\bun\|_{L^r(\rplusd)}
                  \\
                  &\leq (1+c_d)\,\delta
                  +\|\rho_{\vep}\ast \tau_{2\vep
                  }\nabla\overline{\bun}_n-\nabla\overline{\bun}_n\|_{L^r(\rplusd)}\,.
		\end{aligned}
	\end{align*}
	For every $n\in \mathbb{N}$ and $x\in \rplusd $,  it holds that
        \begin{align*}
		\begin{aligned}
                  &\vert (\rho_{\vep}\ast \tau_{2\vep
                  }\nabla\overline{\bun}_n)(x)-\nabla\overline{\bun}_n(x)\vert
                  \\
                  &\leq \bigg\vert \int_{B_{\vep}^{d}(0)}{\rho_{\vep}
                    (y)(\nabla\overline{\bun}_n(x-2\vep\,\mathrm{e}_d
                    -y)-\nabla\overline{\bun}_n(x))\,\mathrm{d}y}\bigg\vert
                  \\
                  &\leq 3\,\vep\,[\nabla\overline{\bun}_n]_{{\dot C}^{0,1}(\rplusd
                    )} \,,
		\end{aligned}
	\end{align*}
        which implies that
        \begin{align*}
          \limsup_{\vep \to 0+}{	\|\nabla S_{\vep}(\bun) -\nabla\bun \|_{L^r(\rplusd)}}\leq (1+c_d)\,\delta\,. 
	\end{align*}
	By passing to the limit as $\delta\to 0+$, we conclude that
        $\nabla S_{\vep}(\bun) \to \nabla\bun$ in $(L^r(\rplusd))^{\ell\times d}$ $(\vep \to 0^{+})$. 
        The same argument yields that $S_{\vep}(\bun) \to \bun$ in $(L^r(\rplusd))^{\ell}$ $(\vep \to 0^{+})$.  
      \end{proof}
\begin{proof}[Proof (of Proposition~\ref{prop:admissibility}).]
  The proof of this result is a standard application of the same tools employed in the
  previous one, which we sketch below.

  \textit{ad (i).} The smoothness $\rho_{\vep}\ast S_{\vep}(\bun) \in (C^\infty(\rplusd))^{\ell}$
  for all $\vep>0$ is evident. For every $\vep >0$, using
  Proposition~\ref{prop:basic_properties}(i), we find that
  \begin{align*}
    \begin{aligned}
      \mathrm{supp}(\rho_{\vep}\ast S_{\vep}(\bun) )&\subseteq
                  \mathrm{supp}(\rho_{\vep})+\mathrm{supp}(S_{\vep}(\bun) )
                  \\
                  &\subseteq\overline{B_{\vep}^{d}(0)}+\varepsilon\,\mathrm{e}_d+ \rplusd
                  \subseteq\rplusd \,.
		\end{aligned}
  \end{align*}

	\textit{ad (ii).} Follows from Proposition~\ref{prop:basic_properties}(ii) since
        the convolution operator commutes with differential operators.
\end{proof}

\section*{Acknowledgments}

%

LCB acknowledges support by INdAM-GNAMPA, by MIUR, within the project
PRIN20204NT8W4$_{-}$004 Nonlinear evolution PDEs, fluid dynamics and transport equations:
theoretical foundations and applications, and by MIUR Excellence, Department of
Mathematics, University of Pisa, CUP I57G22000700001.

LCB also expresses gratitude to King Abdullah University of Science and Technology (KAUST)
for support and hospitality during part of the preparation~of~the
paper.

\section*{Competing Interests}
On behalf of all authors, the corresponding author states that there is no conflict of interest.

%

\def\ocirc#1{\ifmmode\setbox0=\hbox{$#1$}\dimen0=\ht0 \advance\dimen0
  by1pt\rlap{\hbox to\wd0{\hss\raise\dimen0
  \hbox{\hskip.2em$\scriptscriptstyle\circ$}\hss}}#1\else {\accent"17 #1}\fi}
  \def\polhk#1{\setbox0=\hbox{#1}{\ooalign{\hidewidth
  \lower1.5ex\hbox{`}\hidewidth\crcr\unhbox0}}} \def\cprime{$'$}

%

\end{document}